\newcommand*{\mailto}[1]{\href{mailto:#1}{\nolinkurl{#1}}}
\definecolor{darkgreen}{rgb}{0.5,0.25,0}
\definecolor{darkblue}{rgb}{0,0,1}
\definecolor{answerblue}{rgb}{0,0,0.75}
\newtheorem{theorem}{Theorem}[section]
\newtheorem{lemma}[theorem]{Lemma}
\newtheorem{proposition}[theorem]{Proposition}
\theoremstyle{definition}
\theoremstyle{remark}
\newtheorem{remark}[theorem]{Remark}
\numberwithin{equation}{section}
\newcommand{\abs}[1]{\left|#1\right|}
\def\D{\Delta}
\def\norm#1{\left\|#1\right\|}
\def \f12{\frac{1}{2}}
\newcommand{\eps}{\varepsilon}
\newcommand{\seq}[1]{\left\{#1\right\}}
\newcommand{\R}{\mathbb{R}}
\newcommand{\Z}{\mathbb{Z}}
\newcommand{\Do}{\R\times\R_+}
\newcommand{\loc}{\mathrm{loc}}
\newcommand{\mA}{\mathcal{A}}
\newcommand{\mD}{\mathcal{D}}
\newcommand{\mF}{\mathcal{F}}
\newcommand{\mG}{\mathcal{G}}
\newcommand{\tmA}{\tilde{\mathcal{A}}}
\newcommand{\tmD}{\tilde{\mathcal{D}}}
\newcommand{\tmE}{\tilde{\mathcal{E}}}
\newcommand{\tI}{\tilde{I}}
\newcommand{\tGamma}{\tilde{\Gamma}}
\newcommand{\mE}{\mathcal{E}}
\newcommand{\mU}{\mathcal{U}}
\newcommand{\mY}{\mathcal{Y}}
\newcommand{\mZ}{\mathcal{Z}}
\newcommand{\dsum}{ \mathop{\sum \sum}}
\newcommand{\sumj}{\sum_{j \in \Omega_n}}
\newcommand{\sumjp}{\sum_{j \in \Omega'_n}}
\newcommand{\suml}{\sum_{\ell \in \Omega_n}}
\newcommand{\sumn}{\sum_{n=0}^{N-1}}
\newcommand{\sumnN}{\sum_{n=0}^N}
\newcommand{\sumnNt}{\sum_{n=0}^{N-\theta}}
\newcommand{\mR}{\mathcal{R}}
\newcommand{\tmR}{\tilde{\mathcal{R}}}
\newcommand{\mS}{\mathcal{S}}
\newcommand{\tmS}{\tilde{\mathcal{S}}}
\newcommand{\Dx}{\Delta x}
\newcommand{\Dt}{\Delta t}
\newcommand{\ud}{u^{\Delta}}
\newcommand{\kd}{k^{\Delta}}
\newcommand{\jpne}{j+n=\mathrm{even}}
\newcommand{\jpno}{j+n=\mathrm{odd}}
\newcommand{\even}{\mathrm{even}}
\newcommand{\odd}{\mathrm{odd}}
\newcommand{\Dtau}{\Delta^\tau}
\newcommand{\pt}{\partial_t}
\newcommand{\px}{\partial_x}
\newcommand{\tA}{\tilde{A}}
\newcommand{\tB}{\tilde{B}}
\newcommand{\tC}{\tilde{C}}
\newcommand{\tD}{\tilde{D}}
\newcommand{\tE}{\tilde{E}}
\newcommand{\ue}{u_{\eps}}
\newcommand{\pu}{\partial_u}
\newcommand{\pk}{\partial_k}
\newcommand{\pxx}{\partial^2_{xx}}
\newcommand{\pyy}{\partial^2_{yy}}
\newcommand{\puu}{\partial^2_{uu}}
\newcommand{\supp}{\mathrm{supp}}
\newcommand{\Linf}{L^{\infty}}
\newcommand{\dx}{\, dx}
\newcommand{\dt}{\, dt}
\DeclareMathOperator{\dint}{\int\!\!\!\!\!\int\!\!\!}
\begin{document}

\title[Discontinuous flux and quantitative compactness estimates]
{Compactness estimates for difference schemes 
for conservation laws with discontinuous flux}

\author[Karlsen]{Kenneth H. Karlsen}
\address[Kenneth H. Karlsen]
{\newline Department of mathematics, University of Oslo
\newline P.O. Box 1053,  Blindern, NO--0316 Oslo, Norway}
\email[]{\mailto{kennethk@math.uio.no}}

\author[Towers]{John D. Towers}
\address[John D. Towers]{\newline
MiraCosta College\newline
3333 Manchester Avenue \newline
Cardiff-by-the-Sea, CA 92007-1516, USA}
\email{\mailto{john.towers@cox.net}}

\date{\today}

\subjclass[2020]{Primary: 35L65; Secondary: 65M12}
% 35L65   - Hyperbolic conservation laws
% 65M12  - Stability and convergence of numerical methods

\keywords{Hyperbolic onservation law, discontinuous coefficient, Lax-Friedrichs
difference scheme, quantitative compactness estimate}

\begin{abstract}
We establish quantitative compactness estimates for finite difference schemes 
used to solve nonlinear conservation laws. These equations 
involve a flux function $f(k(x,t),u)$, where the coefficient $k(x,t)$ is 
$BV$-regular and may exhibit discontinuities along curves in the $(x,t)$ plane. 
Our approach, which is technically elementary, relies on 
a discrete interaction estimate and one entropy function. 
While the details are specifically outlined 
for the Lax-Friedrichs scheme, the same framework can 
be applied to other difference schemes. Notably, our compactness 
estimates are new even in the homogeneous case ($k\equiv 1$).
\end{abstract}

\maketitle

%\tableofcontents

\section{Introduction} \label{Introduction}
The main part of this paper investigates a finite difference
algorithm as it applies to the Cauchy problem for scalar
conservation laws with the form
\begin{equation}\label{cauchy}
	u_t+f(k(x,t),u)_x=0, \qquad u(x,0)=u_0(x),
\end{equation}
where $(x,t)\in \Do$; $u(x,t)$ is the scalar unknown
function; and $u_0(x),k(x,t),f(k,u)$ are given functions to be detailed later.
Here it suffices to say that for the compactness estimates we
need $k(x,t)\in BV(\R\times\R_+)$, $u\mapsto f(k(x,t),u)$
genuinely nonlinear, and $u_0(x)$ bounded 
(see Section~\ref{sec:LxFr} for the complete list of assumptions).

The special feature of \eqref{cauchy} 
is the nonlinear flux function $f(k(x,t),u)$ that depends
explicitly on the spatial and temporal variables through
a discontinuous coefficient $k(x,t)$. 
Conservation laws with discontinuous flux functions are encountered 
in various applications. For example, in oil reservoirs, rock permeability 
may vary significantly in different locations, resulting in a discontinuous flux function 
(see, e.g., \cite{Gimse:1992ma}). Similarly, in traffic flow, abrupt 
changes in vehicle density may occur at bottlenecks, which leads to a discontinuous flux 
(see, e.g., \cite{Burger:2009fk}).  

The study of conservation laws with discontinuous flux has been 
a heavily investigated area for the past three decades. 
This is partly due to its links to various applications, but also 
because it possesses several non-trivial mathematical properties. 
These properties include the existence of several $L^1$ stable semigroups, 
which are based on different entropy conditions, as well as the lack of uniform bounds 
on the total variation (i.e., $BV$ estimates). 
As a result, it constitutes a non-trivial generalization of scalar conservation 
laws \cite{Kruzkov:1970kx}. While a comprehensive review 
of the extensive literature is beyond the scope of this paper, 
we provide a few select references for interested readers and direct 
them to the reference lists in these papers 
\cite{Adimurthi:2005kx, Andreianov:2010fk, Andreianov:2023aa, 
bressan2019vanishing, Burger:2008sf, Klingenberg:1995ly, 
Karlsen:2003lz, Karlsen:2004sn}.
 
Proving the existence of solutions for conservation laws is 
associated with establishing strong a priori estimates for  
approximate solutions. Classically, this involves bounding 
the total variation of the approximate solutions, independent 
of the approximation parameter.
However, bounding the total variation when the flux is 
discontinuous is in general impossible. To address this issue, several 
alternative convergence approaches have been applied over the years, including
singular mapping techniques as well as compensated compactness 
and other advanced weak convergence methods 
\cite{Erceg:2023aa, Holden:2009nx, Karlsen:2003lz, Karlsen:2004sn, 
Klingenberg:1995ly, Panov-H-measure:10} (this is just a few examples).
Specifically, the weak convergence methods 
(see, e.g., \cite{Erceg:2023aa,Panov-H-measure:10}) are profound 
and involve a significant amount of functional analysis, making 
them technically challenging to comprehend.

The aim of this paper is to derive quantitative $L^1$ translation estimates 
that can be utilized for various applications, such as demonstrating 
the convergence of finite difference approximations. 
Let us consider a time step $\Dt$ and a grid 
size parameter $\Dx$, which collectively form a 
parameter pair $\Delta = (\Dx, \Dt)$. We use the 
notation $\seq{u^{\Delta}}_{\Delta > 0}$ to represent 
the approximate solutions. The translation estimates, which 
maintain uniformity across all $\Delta$, are defined as follows:
\begin{equation}\label{eq:comp-est-intro1}
	\int_0^{T-\tau}\int_{\R}
	\abs{u^\D(x+h,t+\tau)-u^\D(x,t)}\chi(x)
	\dx \dt \lesssim_{T,\chi} \tau^{\mu_t}+h^{\mu_x}.
\end{equation}
This estimate holds true for any temporal ($\tau > 0$) 
and spatial ($h > 0$) translations. 
Here, $\mu_t,\mu_x$ are parameters in the interval $(0,1)$ that 
quantify the ``degree of compactness", where $\mu_t=\mu_x=1$ 
corresponds to uniformly bounded total variation. 

In \eqref{eq:comp-est-intro1}, $\chi$ is a weight function that can 
be employed to control the growth of solutions as they approach infinity in $x$. 
An example of a weight function is $\chi(x)=(1+\abs{x}^2)^{-N}$, where $N>1/2$, 
see \eqref{weight_properties} and also \cite{Karlsen:2023aa}. 
If $\supp \left(u^{\D}\right)$ is contained within a compact set 
$[0,T]\times [-R,R]\subset \R^2$, independent of $\D$, we 
may set $\chi\equiv 1$, which is what we do 
for the rest of the introduction!

We refer to translation estimates like \eqref{eq:comp-est-intro1} 
as \textit{quantitative compactness estimates}. 
They can be used to derive convergence results via 
the well-known Kolmogorov--Riesz--Fr\'echet
characterization of precompact subsets of $L^1$ in terms of the uniform 
continuity of translations in $L^1$, see, e.g., \cite[Theorem 4.26]{Brezis:2010aa}. 
Beyond implying convergence, quantitative compactness estimates 
can be used to derive continuous dependence and 
error estimates \cite{Kuznetsov:1976ys,Bouchut:1998ys}. 

Our approach is technically elementary and relies 
on discrete interaction estimates and 
the existence of one uniformly convex entropy. We draw inspiration from previous 
work of Golse and Perthame \cite{Golse:2013aa}, who developed a 
quantitative compensated compactness framework for 
establishing Besov space regularity of solutions to 
homogenous conservation laws. 
We adapt their approach and apply it to establish quantitative 
compactness estimates for sequences of approximate solutions. 
A related approach has recently been employed in the study 
of vanishing viscosity approximations of stochastic 
conservation laws, as described in \cite{Karlsen:2023aa}.
While the details are specifically outlined 
for the Lax-Friedrichs scheme, the same framework can 
be applied to other difference schemes as well. 
The obtained estimates are new also in the homogenous 
case $k\equiv 1$ (scalar conservation law).

To clarify further, let us consider the Lax-Friedrichs scheme 
for the Cauchy problem \eqref{cauchy}, which can be 
represented by the following equation \cite{Karlsen:2004sn}:
\begin{equation}\label{eq:LxF-intro1}
	U_j^{n+1}=\frac12\left(U_{j-1}^n+U_{j+1}^n\right)- \frac{\lambda}{2}
	\left(f_{j+1}^n - f_{j-1}^n\right),
\end{equation}
Here, $U_j^n$ approximates the exact solution $u$ at the grid 
point $(j\Dx,n\Dt)$, where $n$ and $j$ are 
integers, $\lambda = \Dt/\Dx$, and 
$f_j^n=f\left(k_{j}^n,U_{j}^n\right)$. The temporal 
and spatial discretization parameters are linked through 
a CFL condition \eqref{ass:CFLkappa}, so that $\Dt\sim \Dx$.

Assuming reasonable conditions on $f$, $k\in BV$, and $u_0\in L^\infty$, we can 
establish the following two a priori estimates (see next section):
\begin{equation}\label{eq:apriori-intro1}
	\sup_{n,j} \abs{U_j^n}\lesssim 1, 
	\quad  \Dx \sum_n\sum_j
	\left(U_{j+1}^n - U_{j-1}^n\right)^2\lesssim 1.
\end{equation}
These estimates are the only bounds that remain robust with 
respect to the regularity of the coefficient $k(x,t)$. If $k\equiv 1$, the 
approximations are of bounded total variation \cite{Holden:2015aa}. 
The second bound is referred to as a dissipation (or entropy stability) 
estimate, which is known to hold for many numerical schemes 
for conservation laws \cite{Eymard:2000fr,Tadmor2003}. 
These estimates are sometimes referred to as weak $H^1$ 
estimates since they suggest that 
$\iint \abs{\px \ud}^2\dx\dt\lesssim 1/\abs{\D}$. 
The estimate for the Lax-Friedrichs scheme \eqref{eq:LxF-intro1} 
with variable and discontinuous $k$ was derived in \cite{Karlsen:2004sn}. 
In this paper, we present a slightly 
generalized variant of the estimate.

Given the \textit{genuinely nonlinear} flux $f(k,u)$, in the sense 
of \eqref{f_S_assumptions}, consider an 
entropy/entropy flux pair $\bigl(S(k,u),Q(k,u)\bigr)$. 
Set $S_j^n = S\left(k_j^n,U_j^n\right)$ and 
$Q_j^n = Q\left(k_j^n,U_j^n\right)$. 
The Lax-Friedrichs scheme satisfies the following 
entropy balance (to be derived later): 
\begin{equation}\label{eq:LxF-entropy-intro1}
	S_j^{n+1}-\frac12\left(S_{j-1}^n+S_{j+1}^n\right)
	+ \frac{\lambda}{2} \left(Q_{j+1}^n-Q_{j-1}^n \right) 
	=  \Psi_j^n,
\end{equation}
where, by \eqref{eq:apriori-intro1} and $k\in BV$, we have
$\Dx \sum_n\sum_j \abs{\Psi_j^n} \lesssim 1$. Note 
that the entropy production $\Psi_j^n$ is a signed 
measure (even if $S=S(u)$ is convex), which is 
distinct from the homogenous conservation 
law case where it is negative. 

Let $\nu$ be an arbitrary nonnegative integer and introduce 
the ``spatial difference" quantities
\begin{align*}
	&A_j^n =U_{j+2\nu}^n-U_j^n, 
	\quad B_j^n =f_{j+2\nu}^n-f_j^n,
	\\ & 
	 D_j^n =S_{j+2\nu}^n-S_j^n, 
	\quad E_j^n =Q_{j+2\nu}^n-Q_j^n.
\end{align*}
The appearance of $2\nu$, as opposed to simply $\nu$, in the above
equations is due to that we use a staggered mesh, where the various 
grid functions are only defined at grid points where $j+n$ is an even integer.
Then it is straightforward to deduce from \eqref{eq:LxF-intro1} 
and \eqref{eq:LxF-entropy-intro1} that the following 
$2\times 2$ system of finite difference equations hold:
\begin{equation}\label{eq:system-intro}
	\begin{split}
		 & A_j^{n+1} - \f12 \left(A_{j-1}^n + A_{j+1}^n \right) 
 		+{\lambda \over 2} \left(B_{j+1}^n - B_{j-1}^n \right) = C_{A,j}^n, 
		\\ & 
		D_j^{n+1} - \f12 \left(D_{j-1}^n + D_{j+1}^n \right) 
		+{\lambda \over 2} \left(E_{j+1}^n - E_{j-1}^n \right) 
		= C_{D,j}^n,
	\end{split}
\end{equation}
where $C_{A,j}^n\equiv 0$ and $C_{D,j}^n= \Psi_{j+2\nu}^n- \Psi_j^n$. 
We are also going to need the following more regular quantities 
obtained by applying ``inverse-difference" operators to $A_j^n$, $D_j^n$:
\begin{align*}
	\mA_\ell^n =  \Dx  \sum_{j\le \ell} A_j^n, 
	\quad 
	\mD_j^n = \Dx  \sum_{\ell\ge j} D_\ell^n.
\end{align*}
Indeed, one can prove that 
$$
\sup_{n,j}\abs{\mA_{\ell}^n} \lesssim 2\nu \Dx, 
\quad 
\sup_{n,j}\abs{\mD_j^n} \lesssim 2\nu \Dx,
$$ 
recalling that $\nu$ is a nonnegative integer.

Next, we introduce the \textit{interaction functional}
\begin{equation*}%\label{eq:def_In-into}
	I^n = \Dx^2 \dsum_{j \le \ell} A_j^n D_\ell^n,
\end{equation*}
which is a measure of future potential interaction at time level $n$ 
of the finite difference solutions. This is a discrete version of a functional 
referred to as the Varadhan functional by Tartar \cite[p. 182]{Tartar:book08} and 
Golse \& Perthame \cite{Golse:2013aa}.

In this paper, we establish the following discrete interaction identity 
for the $2\times 2$ system \eqref{eq:system-intro}:
\begin{equation}\label{eq:interaction-intro1}
	\f12 \Dt  \Dx  \sum_n \sum_j  \left(A_j^n E_j^n -D_j^n B_j^n\right)
	= - \Dx  \sum_n \sum_j  C_{D,j}^n \mA_j^{n+1}+\mE, 
\end{equation}
To obtain an evolution difference equation satisfied by 
the interaction potential $I^n$, we compute the temporal 
difference $\frac{I^{n+1}-I^n}{\Dt}$. 
This identity is then multiplied by $\Dt$ and summed over $n$, resulting in 
the equation given by \eqref{eq:interaction-intro1}. 
The derivation of \eqref{eq:interaction-intro1} employs a discrete chain 
rule, as well as the difference equations for $A_j^n$ and $D_j^n$. 
A more complicated form of \eqref{eq:interaction-intro1} holds when there is a 
weight function $\chi$ present. The interaction identity \eqref{eq:interaction-intro1} 
can be interpreted as a discrete form of the interaction 
identity (7) introduced in \cite{Golse:2013aa}, as well 
as the stochastic interaction identity 
(3.12) in \cite{Karlsen:2023aa} (with $\sigma =0$). 
It should be noted that the term $\mE$ appearing on the right-hand 
side of \eqref{lxf_interaction} is solely a consequence of the discretization 
process and does not have a corresponding counterpart in the interaction 
identities of \cite{Golse:2013aa,Karlsen:2023aa}. 
For the detailed form of $\mE$, see Lemma \ref{lemma_lxf_interaction}. 

The terms on the right-hand side of \eqref{eq:interaction-intro1} can 
all be bounded by a $\D$-independent constant $C$ times $2\nu \Dx$. 
The next step is to convert \eqref{eq:interaction-intro1} into a quantitative 
compactness estimate, which requires exploiting the genuine nonlinearity 
of $f(k,u)$ in $u$. The precise assumption can be found 
in \eqref{f_S_assumptions}, and a corresponding assumption 
for homogeneous conservation laws 
can be found in \cite[Section 5]{Golse:2013aa}. 
A relevant special case occurs if $u\mapsto f(k,u)$ is 
uniformly convex and $S=f$. Then 
$$
A_j^n E_j^n -D_j^n B_j^n\gtrsim \abs{U_{j+2\nu}^n-U_j^n}^4,
$$ 
and therefore 
\begin{equation}\label{eq:spatial-discrete}
	\Dt  \Dx  \sum_n \sum_j  \abs{U_{j+2\nu}^n-U_j^n}^4
	\lesssim 2 \nu \Dx,
\end{equation}
A similar estimate can be deduced for the temporal differences. 
Along with the compact support assumption (or if we use a weight function) 
and H\"older's inequality, this translates into the $L^1$ 
translation estimate given by \eqref{eq:comp-est-intro1} 
with $h=\tau=2\nu \Dx$ and $\mu_t=\mu_x=1/4$, which is the 
main result of this paper. 

In summary, the above outline provides an overview of the 
quantitative compactness approach. To delve deeper, we will 
present detailed proofs incorporating a weight function in the following sections.
We demonstrate the effectiveness of the quantitative 
compactness approach on the Lax-Friedrichs scheme. However, with 
slight modifications to the discrete interaction identity given 
by \eqref{eq:interaction-intro1}, the same approach can be applied 
to other classical schemes, provided they are 
uniformly bounded in $L^\infty$ and satisfy a 
weak $H^1$ (dissipation) estimate, that is to say, they satisfy 
modified forms of the a priori estimates 
described in \eqref{eq:apriori-intro1}. 
This allows for elementary convergence proofs, especially 
in cases where obtaining total variation estimates is challenging.

Our results provide an existence theorem for 
the Cauchy problem \eqref{cauchy}.  
The uniqueness question for conservation laws with 
discontinuous flux is another problem entirely, even in
the most basic setting where the flux has a single 
spatial discontinuity and no temporal
discontinuity (the so-called two-flux problem). 
For example, it turns out that the two-flux problem
generally has infinitely many definitions of entropy solution, each 
one of which generates its own distinct $L^1$ contraction 
semigroup \cite{Adimurthi:2005kx}. In \cite{Karlsen:2004sn} we 
used results from \cite{Karlsen:2003lz} to prove a uniqueness 
result applicable to the Lax-Friedrichs scheme of this paper 
in the special case where $k$ is piecewise Lipschitz continuous,
meaning that all of the discontinuities of $k$ occur along 
Lipschitz continuous curves in the $(x,t)$-plane. 
This uniqueness result was proven under the additional 
assumption that all of the flux discontinuities satisfy a certain
``crossing condition''. At least for the simple two-flux version 
of the problem \eqref{cauchy}, the solution generated by
the Lax-Friedrichs scheme of this paper corresponds to 
the so-called vanishing viscosity solution \cite{Andreianov:2010fk}. 
In the absence of the crossing condition it is not known 
whether the (subsequential) limit of the Lax-Friedrichs scheme
is the vanishing viscosity solution.  
Finally, given \eqref{eq:spatial-discrete}, it is worth 
noting that the solution $u$ derived as the limit 
of the Lax-Friedrichs scheme,
exhibits Besov space regularity as given by
$$
u \in B_{ \infty,\loc}^{1/4,4}(\R^+ \times \R).
$$
This regularity aligns with the known regularization effect established 
in \cite[Theorem 5.1]{Golse:2013aa} for homogeneous 
equations with a single convex entropy.

\medskip

The structure of this paper is as follows: Section \ref{sec:LxFr} presents 
the assumptions related to the data of the problem and precisely 
defines the Lax-Friedrich scheme. This section also 
introduces our main result. In Section \ref{sec:prelim-est}, we 
establish preliminary $L^\infty$ and weak $H^1$ estimates. 
Section \ref{sec:spatial_est} proves the spatial translation estimate, 
while Section \ref{sec:temporal_est} details the temporal estimate.
By bringing together the spatial and temporal estimates, we provide 
the proof of our main result.

\section{Lax-Friedrichs scheme and main result}
\label{sec:LxFr}

We begin by listing
some assumptions on $u_0,k,f$ which
will be needed.

\noindent Regarding the initial function we assume
\begin{equation}\label{ass:init_data_en}
	u_0 \in \Linf(\R), \qquad a \leq u_0(x)
	\leq b \quad \text{for a.e.~$x\in \R$}.
\end{equation}

\noindent For the discontinuous coefficient $k:\Do\to \R$ we assume that
\begin{equation}\label{ass:k}
	k \in \Linf(\Do)\cap BV(\R\times\R_+), \quad \alpha \leq k(x,t)
	\leq \beta \quad \text{for a.e.~$(x,t)\in \Do$}.
\end{equation}

\noindent Regarding the flux function 
$f:[\alpha,\beta]\times [a,b]\to \R$ we assume that
\begin{equation}\label{ass:f_en}
	f \in C^1([\alpha,\beta]\times [a,b]).
\end{equation}
We need also an assumption on $f$ that guarantees
that the Lax-Friedrichs approximations
stay uniformly bounded. For example, we can require
\begin{equation}\label{ass:f_to}
   \text{$f(k,a)=f(k,b)=0$ for all $k\in [\alpha,\beta]$},
\end{equation}
which in fact implies that the interval $[a,b]$ becomes
an invariant region.

We use the notation $\pk G$ and $\pu G$ to denote the 
first order partial derivatives of $G(\cdot,\cdot)$ 
with respect to the first and second variables.
Letting $\mU := [\alpha,\beta] \times [a,b]$, we will 
use the following abbreviations:
\begin{equation*}
	\norm{\partial_u G}= \max_{(k,u) \in \mU}\abs{\partial_u G(k,u)},
	\quad 
	\norm{\partial_k G}= \max_{(k,u) \in \mU}\abs{\partial_k G(k,u)} 
	\quad 
	\textrm{for $G = f, S, Q$}.
\end{equation*}
We are given functions $S$ and $Q$ (referred to earlier) 
that are assumed to form an entropy/entropy 
flux pair $(S(k,u),Q(k,u))$, meaning that
\begin{equation}\label{def_ent_pair}
	\pu Q(k,u)= \pu S(k,u) \pu f(k,u).
\end{equation}
We assume that
\begin{equation}\label{ass:S_Q_1}
	S \in C^2([\alpha,\beta]\times [a,b])
	\quad 
	(\text{and thus $Q \in C^1([\alpha,\beta]\times [a,b])$}).
\end{equation}

We make the following \textit{genuine nonlinearity} 
assumptions about $f$ and $S$. For $k \in [\alpha,\beta]$,
\begin{equation}\label{f_S_assumptions}
	\begin{split}
		& \pu f(k,v) - \pu f(k,w) \ge C_f (v-w)^{p_f}, 
		\quad a \le w < v \le b, \quad p_f \ge 1, 
		\\ &\pu S(k,v) - \pu S(k,w) \ge C_S (v-w)^{p_{S}}, 
		\quad a \le w<v \le b, \quad p_S \ge 1, 
	\end{split}
\end{equation}
for some constants $C_f, C_S >0$.

Next we describe the Lax-Friedrichs scheme.
Let $\Dx>0$ and $\Dt>0$ denote the spatial
and temporal discretization parameters, which
are chosen so that they always obey the CFL condition
\begin{equation}
   \label{ass:CFLkappa}
   \lambda \|\pu f\|
   \leq 1-\kappa, \qquad
   \lambda = \frac{\Dt}{\Dx},
   \qquad \text{for some $\kappa\in (0,1)$.}
\end{equation}
Here $\kappa$ is a positive parameter which we can choose to
be very small so that the allowable time step is reduced only
negligibly.
We will work under the standing
assumption that the space step $\Dx$ and the time step $\Dt$
are comparable, i.e., there are constants $c_1,c_2>0$ such that
$c_1\le \frac{\Dt}{\Dx}\le c_2$. Therefore, when we declare that a 
constant $C$ is independent of $\Dx$ (or $\Dt$), it 
implies that $C$ is also independent of $\Delta = (\Dx, \Dt)$.

The time domain $[0,\infty)$ is discretized via $t^n = n\Dt$ for
$n\in\Z_+^0:=\{0,1,\ldots\}$ ($\Z_+:=\{1,2,\ldots\}$), resulting
in time strips $[t^n,t^{n+1})$. The spatial domain $\R$  is divided into
cells $[x_{j-1},x_{j+1})$ with centers at
the points $x_j =j \Dx$ for $j \in\Z$. Let
$\rho_j(x)$ be the characteristic function for the interval $[x_{j-1},x_{j+1})$
and $\rho_j^n$ the characteristic function for the rectangle
$[x_{j-1},x_{j+1}) \times [t^n,t^{n+1})$.

The finite difference scheme then generates, for each mesh
size $\Delta = (\Dx,\Dt)$, with $\Dx$ and  $\Dt$ taking
values in sequences tending to zero, a piecewise constant approximation
\begin{equation}\label{eq:ud_def}
   \ud(x,t)=\sum_{n\in \Z_+^0}\underset{\jpne}{\sum_{j\in \Z}}
   \rho_j^n(x,t)U_j^n,
\end{equation}
where the values $\left\{U_j^n:(j,n)\in\Z\times \Z_+^0, \jpne\right\}$
remain to be defined.

We define $\left\{U_j^0: j=\even\right\}$ by
\begin{equation}\label{initdata}
    U_j^0 = \frac{1}{2\Dx}\int_{x_{j-1}}^{x_{j+1}}u_0(x)\dx.
\end{equation}

\noindent
Given $\left\{U_j^n:\jpne\right\}$, we define next $\left\{U_j^{n+1}:\jpno\right\}$.
Let $(K,U)=(K,U)(x,t)$ denote a weak solution of the $2\times 2$ system
\begin{equation}
   \label{eq:system}
   K_t= 0, \qquad U_t + f(K,U)_x=0, \qquad (x,t)\in \Do,
\end{equation}
with Riemann initial data
$$
K(x,0)=
\begin{cases}
   k_{j-1}^n, & x<x_j, \\
   k_{j+1}^n, & x>x_j,
\end{cases}
\qquad
U(x,0)=
\begin{cases}
   U_{j-1}^n, & x<x_j, \\
   U_{j+1}^n, & x>x_j,
\end{cases}
$$
where the coefficient $k(x,t)$ has been
discretized via the piecewise constant approximation
\begin{equation}\label{k_init}
	\kd(x,t)= \sum_{n\in \Z_+^0}\underset{\jpne}{\sum_{j\in \Z}}
   	\rho_j^n(x,t)k_j^n,
	\qquad 
	k_j^n = k(x_j,\hat{t}_n).
\end{equation}
Recall that $k(x,t)$ is a $BV$ function. For every $t$, the 
function $k(t,\cdot)$ can be considered as a precise representative, 
being defined everywhere and normalized through right-continuity. 
Consequently, we are justified in setting 
$k_j^n = k(x_j,\hat{t}_n)$ in \eqref{k_init}, where 
$\hat{t}_n$ is an arbitrary point lying in the interval
$[t^n,t^{n+1})$ (for example, $\hat{t}_n=t^n$). We then define
\begin{equation*}
U_j^{n+1}=\frac{1}{2\Dx} \int_{x_{j-1}}^{x_{j+1}} U(x,\Dt)\dx.
\end{equation*}
Integrating the weak formulation of \eqref{eq:system}
over the control volume $[x_{j-1},x_{j+1})\times(0,\Dt)$ gives
\begin{align*}
   &\int_{x_{j-1}}^{x_{j+1}} U(x,\Dt)\dx
   = \int_{x_{j-1}}^{x_{j+1}} U(x,0)\dx
   \\ & \qquad
   - \int_0^{\Dt} \left(f\left(K(x_{j+1},t),U(x_{j+1},t)\right)
   -f\left(K(x_{j-1},t),U(x_{j-1},t)\right)\right)\dt.
\end{align*}

\noindent After a direct evaluation of the integrals for $\Dt$
small, we obtain the staggered
Lax-Friedrichs scheme
\begin{equation}\label{LxFr_scheme}
	U_j^{n+1}=\frac12\left(U_{j-1}^n+U_{j+1}^n\right)
	-\frac{\lambda}{2}\left(f\left(k_{j+1}^n,U_{j+1}^n\right)
	-f\left(k_{j-1}^n ,U_{j-1}^n\right)\right).
\end{equation}

Notice that in this paper we restrict our attention
to the sublattice
$$
\left\{(x_j,t_n): \jpne\right\},
$$
which means that $\left\{U_j^0: j=\even\right\}$,
$\left\{U_j^1: j=\odd\right\}$,
$\left\{U_j^2: j=\even\right\}$ etc.~ are calculated.
The following abbreviations can be used to shorten some of 
the expressions that will arise.
For fixed $n \in \{0, 1, \ldots, N\}$,
\begin{equation*}
	\Omega_n = \{j \in \Z: n+j 
	= \textrm{even}\}, \quad 
	\Omega'_n = \{j \in \Z: n+j = \textrm{odd}\}.
\end{equation*} 
Note that 
\begin{equation*}
	\textrm{$U_j^n, U_{j \pm 2}^n, \ldots$ 
	and $U_{j\pm 1}^{n+1}, U_{j \pm 1}^{n+1}, \ldots$ 
	are defined for $j \in \Omega_n$},
\end{equation*}
while
\begin{equation*}
	\textrm{$U_{j\pm 1}^{n}, U_{j \pm 3}^{n}, 
	\ldots$ and $U_{j}^{n+1}, U_{j \pm 2}^{n+1}, \ldots$
are defined for $j \in \Omega'_n$.}
\end{equation*}

We work with the so-called ``even'' sublattice 
described above in the interest of conceptual
simplicity. Note that when one applies the Lax-Friedrichs 
scheme on the standard lattice, what is generated is two 
uncoupled numerical solutions, one solution on the even sublattice, and
one solution on the odd sublattice. Our analysis for the even 
sublattice would then apply to each of those solutions separately.

The following theorem is our main result:

\begin{theorem}\label{thm:main}
Suppose $u_0$ satisfies \eqref{ass:init_data_en}, 
the discontinuous coefficient $k$ satisfies \eqref{ass:k}, the flux 
$f$  satisfies \eqref{ass:f_en}, \eqref{ass:f_to}, 
and the genuine nonlinearity assumption in \eqref{f_S_assumptions}. 
Suppose also that we are given an entropy/entropy 
flux pair $(S,Q)$ that satisfies \eqref{def_ent_pair}, \eqref{ass:S_Q_1}, 
and \eqref{f_S_assumptions}. Let the spatial and temporal discretization 
parameters $\D=(\Dx,\Dt)$ obey the CFL condition \eqref{ass:CFLkappa}.
Denote by $u^\D(x,t)$ the piecewise constant 
Lax-Friedrichs approximation defined 
by \eqref{eq:ud_def}, \eqref{initdata}, \eqref{k_init}, and 
\eqref{LxFr_scheme}.  Then the following 
quantitative compactness estimate holds:
\begin{equation}\label{eq:comp-est-main-thm}
	\int_0^{T-\tau}\int_{\R}
	\abs{u^\D(x+h,t+\tau)-u^\D(x,t)}\chi(x)
	\dx \dt \leq C\bigl(\tau^{\mu}+h^{\mu}\bigr),
	\quad \mu:= \frac{1}{p_f+p_{S}+2},
\end{equation}
for $h>0$ and $\tau \in (0,T)$. Here, $C=C_{T,\chi}$ is a 
constant independent of $\D$, and $\chi \in C^1(\R) \cap L^1(\R)$ is 
a weight function satisfying the following conditions for all $x\in \R$:
\begin{equation}\label{weight_properties}
	\begin{split}
		& \chi(x)>0, 
		\quad
		\abs{\chi'(x)} \le C_{\chi} \chi(x), 
		\quad \text{and}
		\\ & 
		\abs{\chi(x+z)-\chi(x)} 
		\lesssim \chi(x) \abs{z},
		\quad
		\sup_{\abs{x-y} \le R} 
		{\chi(x)\over \chi(y)} \lesssim_R 1.
	\end{split}
\end{equation}
If $u^\D(\cdot,t)$ is compactly supported, or 
more generally, if it is bounded in $L^1(\R)$, 
uniformly in $\D$ and $t\in [0,T]$, we 
can choose the weight function $\chi$ to be 
equal to one (i.e., $\chi\equiv 1$).
\end{theorem}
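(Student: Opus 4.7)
The plan is to establish \eqref{eq:comp-est-main-thm} by controlling spatial and temporal translations separately and combining them via the triangle inequality. Writing
\begin{equation*}
|u^\Delta(x+h,t+\tau) - u^\Delta(x,t)| \le |u^\Delta(x+h,t+\tau) - u^\Delta(x+h,t)| + |u^\Delta(x+h,t) - u^\Delta(x,t)|,
\end{equation*}
the shift-invariance $\sup_{|x-y|\le R}\chi(x)/\chi(y) \lesssim_R 1$ from \eqref{weight_properties} absorbs the outer shift $x\mapsto x+h$ into a constant (large $h$ is handled by the $L^\infty$ bound combined with $\chi\in L^1$). This reduces the theorem to a weighted spatial translation bound of order $h^\mu$ and a weighted temporal translation bound of order $\tau^\mu$, each with $\mu = 1/(p_f+p_S+2)$.

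For the spatial estimate I would follow the blueprint outlined in the introduction. Starting from the a priori $L^\infty$ and weak $H^1$ bounds \eqref{eq:apriori-intro1} together with the entropy balance \eqref{eq:LxF-entropy-intro1}, assemble the $2\times 2$ difference system \eqref{eq:system-intro} for the $2\nu$-cell spatial differences $A_j^n, B_j^n, D_j^n, E_j^n$. Introduce a weighted version of the interaction potential $I^n = \Delta x^2 \sum_{j\le\ell}A_j^n D_\ell^n\chi_j\chi_\ell$, compute the temporal increment $(I^{n+1}-I^n)/\Delta t$ via a discrete chain rule and the difference equations for $A$ and $D$, and sum in $n$ to obtain a weighted form of the interaction identity \eqref{eq:interaction-intro1}. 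The resulting right-hand side is bounded by $C\cdot 2\nu\Delta x$ by invoking $k\in BV$, the weak $H^1$ bound, and the uniform $\mathcal{O}(2\nu\Delta x)$ bounds on the primitives $\mathcal{A}_\ell^n, \mathcal{D}_j^n$. The genuine nonlinearity assumption \eqref{f_S_assumptions} yields the pointwise lower bound
\begin{equation*}
A_j^n E_j^n - D_j^n B_j^n \gtrsim |U_{j+2\nu}^n - U_j^n|^{p_f+p_S+2},
\end{equation*}
so that $\Delta t\Delta x \sum_{n,j}|U_{j+2\nu}^n - U_j^n|^{p_f+p_S+2}\chi_j \lesssim 2\nu\Delta x$. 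H\"older's inequality with conjugate exponents $(p_f+p_S+2,(p_f+p_S+2)/(p_f+p_S+1))$, together with $\chi\in L^1$, upgrades this to the discrete weighted $L^1$ estimate of order $(2\nu\Delta x)^\mu$; transferring back to $u^\Delta$ and handling sub-grid translations via the $L^\infty$ bound gives the continuous spatial bound with $h = 2\nu\Delta x$.

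For the temporal estimate, I would telescope the scheme: summing \eqref{LxFr_scheme} from $n$ to $n+m$ expresses $U_j^{n+m}-U_j^n$ as a sum of numerical flux differences $f(k_{j+1}^\cdot, U_{j+1}^\cdot) - f(k_{j-1}^\cdot, U_{j-1}^\cdot)$ and numerical-diffusion terms $\tfrac12(U_{j+1}^\cdot+U_{j-1}^\cdot)-U_j^\cdot$. The flux contribution is controlled by the spatial translation bound via Lipschitz continuity of $f$ and the $BV$ regularity of $k$, while the diffusion contribution is controlled by the weak $H^1$ estimate via Cauchy--Schwarz. Balancing $m\Delta t\sim \tau$ against the resulting powers of $\Delta x$ produces the $\tau^\mu$ bound with the same exponent. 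The main obstacle I anticipate is the derivation of the weighted interaction identity, specifically controlling the discretization-error term $\mathcal{E}$ in \eqref{eq:interaction-intro1}, which has no continuous counterpart in \cite{Golse:2013aa, Karlsen:2023aa}, together with the commutator-like remainders arising from inserting $\chi$ into the potential, which must be absorbed using $|\chi'|\le C_\chi\chi$. A secondary subtlety is that the entropy production $\Psi_j^n$ is signed rather than sign-definite, so the associated right-hand side terms cannot be discarded and must instead be dominated quantitatively using the $BV$ norm of $k$.
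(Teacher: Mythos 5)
Your decomposition into spatial and temporal translates and your spatial argument match the paper's strategy (Sections 4--5): the weighted difference system, the interaction potential $I^n$, the interaction identity, the bound on the primitives $\mA_\ell^n,\mD_j^n$ of order $2\nu\Dx$, the lower bound from genuine nonlinearity, and H\"older's inequality are exactly the ingredients of Proposition~\ref{prop_space_est}. One imprecision there: the pointwise inequality $A_j^nE_j^n-D_j^nB_j^n\gtrsim\abs{U_{j+2\nu}^n-U_j^n}^{p_f+p_S+2}$ is not true as stated, because $\D_h Q_j^n$, $\D_h S_j^n$, $\D_h f_j^n$ mix $u$-increments with $k$-increments; one must first split off the $k$-cross-terms (the paper's identity \eqref{Dh_identity} and the sums $\mS_1,\mS_2$) and bound their total by the $BV$ norm of $k$. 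This is repairable and does not change the architecture.

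The temporal estimate is where your proposal has a genuine gap. Telescoping the scheme over $\theta=\tau/\Dt$ steps and estimating each step in $L^1$ gives, for the flux contribution, roughly $\theta$ times the one-step bound $\Dt^{\mu}$ (and for the diffusion contribution $\theta$ times $\sqrt{\Dx}$ via Cauchy--Schwarz and the weak $H^1$ bound), i.e.\ a total of order $\tau\,\Dx^{\mu-1}$. Since $\mu<1$, this diverges as $\Dx\to 0$ for fixed $\tau$; there is no choice of parameters to ``balance'' because $\theta$ is dictated by $\tau$ and $\Dt$. This is precisely why the paper does \emph{not} telescope the $L^1$ norm of $U_j^{n+\theta}-U_j^n$: instead it reruns the entire interaction-functional machinery with the temporal difference operator $\Dtau$ in place of $\D_h$ (Lemmas~\ref{lemma_time_trans_3}--\ref{lemma_time_est_even}). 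Telescoping in time is used only to bound the \emph{primitives} $\tmA_\ell^n,\tmD_\ell^n$ (Lemma~\ref{lemma_mA,mD_temporal}), where the inner spatial sum telescopes in $j$ so that each double time step contributes only $\order{\Dx}$ and the total is $\order{\tau}$; the one-step estimate derived from the spatial bound (Lemma~\ref{lemma_lxf_time_trans}) is then used only for the odd and fractional parts of $\theta$. To complete your proof you would need either to adopt this second interaction identity for $\Dtau$, or to replace the direct telescoping by a genuine interpolation argument (testing $u^\D(\cdot,t+\tau)-u^\D(\cdot,t)$ against a mollified sign function and optimizing the mollification radius against $\tau$), neither of which is present in the proposal as written.
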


The validation of \eqref{eq:comp-est-main-thm} 
is directly derived from the results 
in Section \ref{sec:spatial_est} 
(Proposition \ref{prop_space_est})
and Section \ref{sec:temporal_est} 
(Proposition \ref{prop_time_est}).

\begin{remark}
Assuming that the entropy $u \mapsto S(k,u)$ 
in \eqref{f_S_assumptions} is uniformly convex,
\begin{equation}\label{S_convex}
	\puu S(k,u) \ge \gamma >0, 
	\quad \forall (k,u) \in [\alpha,\beta] \times [a,b],
\end{equation}
the exponent $\mu$ becomes$\frac{1}{p_f+3}$, where $p_f\ge1$ 
is given by \eqref{f_S_assumptions}. Suppose 
$f \in C^2([\alpha,\beta] \times [a,b])$ is uniformly 
convex in $u$: $\puu f(k,\xi) \ge \gamma >0$ for all 
$(k,\xi) \in [\alpha,\beta] \times [a,b]$. If we 
choose $S = f$ as the entropy, then $\mu=\frac14$, 
see also Remark \ref{remark_eta_eq_f}.
\end{remark}

\begin{remark}
In this paper, we employ estimates denoted 
as ``$a\lesssim b$", signifying that there exists a constant $C$ 
such that $a \leq C b$. Notably, $C$ may 
depend on the specific constants associated 
with the assumptions of the problem. However, $C$ 
does not depend on the grid parameters $\D$.
\end{remark}

We will conclude this section by outlining the key 
concepts underpinning the proof of the spatial part of 
Theorem \ref{thm:main}. Our discussion will be 
focused on a homogenous equation, augmented with 
artificial viscosity, to provide a clear outline of the underlying ideas. 

For any fixed $\eps>0$, let 
$\ue\in C^2$ satisfy the equations
\begin{equation}\label{eq:par-system}
	\begin{split}
		& \pt \ue +\px f(\ue)=\eps \pxx \ue, 
		\\ & 
		\pt \eta(\ue)+\px q(\ue)
		=\eps\pxx \eta(\ue)-\mu_\eps,
	\end{split}
\end{equation}
where $f,\eta\in C^2$ 
are (for example)  uniformly convex and
$$
\mu_\eps:=\eta''(\ue)
\eps \left(\px \ue\right)^2, \quad 
\int_0^\infty \int_{-\infty}^\infty \, d\mu_\eps(x,t)
\leq \norm{\eta(\ue(0,\cdot)}_{L^1(\R)}
\lesssim 1,
$$
assuming that $\norm{\ue}_{L^\infty(\R_+\times \R)}, 
\norm{\ue}_{L^\infty(\R_+;L^p(\R))}\lesssim 1$, $p=1,2$. 
To prevent ambiguity and ensure clarity, we denote the 
entropy/entropy flux pair as $(\eta, q)$, distinguishing 
them from the functions $(S,Q)$ used for \eqref{cauchy}. 
We denote by $\Delta_h W(x,t):=W(t,x+h)-W(x,t)$ the spatial 
difference operator with step size $h$. Set
$$
a :=\Delta_h \ue, \quad 
b :=\Delta_h f(\ue), \quad
C_a :=\eps \pxx \Delta_h\ue,
$$
and 
\begin{align*}
	& d:=\Delta_h \eta(\ue), \quad 
	e:=\Delta_h q(\ue),
	\quad 
	C_d:=C_{d,1}+C_{d,2},  
	\\ & \quad
	\text{where} \quad
	C_{d,1}:=\eps\pxx \Delta_h\eta(\ue),
	\quad 
	C_{d,2}:=-\Delta_h\mu_\eps.
\end{align*}
Then the system \eqref{eq:par-system} takes 
the form
\begin{align*}
	& a_t + b_x = C_a, 
	\\ & d_t+e_x=C_d,
\end{align*}
where, applying Lemma \ref{lemma_entropy_ineq} 
and Remark \ref{remark_eta_eq_f} 
with $S=\eta$ and $Q=q$, 
$$
ae-db \gtrsim \abs{\Delta_h \ue}^4=\abs{\ue(x+h,t)-\ue(x,t)}^4.
$$

We need the (spatial) anti-derivatives of $a$ and $d$:
$$
A(t,y):=\int_{-\infty}^y a(t,y)\, dx,
\quad 
D(x,t):=\int_x^{\infty} d(t,y)\, dy,
$$
which crucially are uniformly (in $\eps$) Lipschitz continuous. 

Denote by $I(t)$ the (spatial) interaction functional:
$$
I(t)=\iint\limits_{x<y} a(x,t)d(t,y)\,dx \, dy.
$$ 
A straightforward calculation will confirm that 
the following (spatial) interaction identify holds:
\begin{align*}
	\frac{d}{dt} I(t)
	& =\int_{\R} \bigl(ae-db \bigr)(t,z)\,dz
	\\ & \qquad 
	+\iint\limits_{x<y} 
	C_a(x,t)d(t,y)\,dx\,dy
	+\iint\limits_{x<y} 
	a(x,t)C_d(t,y)\,dx\,dy
	\\ & 
	=\int_{\R} \bigl(ae-db \bigr)(t,z)\,dz
	\\ & \qquad 
	+\int_{\R} C_a(x,t)D(x,t)\,dx
	+\int_{\R} A(t,y)C_d(t,y)\,dy.
\end{align*}

Since we assumed $a,d\in L^\infty_tL^1_x$,
$$
I(t)\le \norm{a}_{L^\infty_tL^1_x}
\norm{d}_{L^\infty_tL^1_x}\lesssim 1, 
\quad t\in \R_+.
$$
Hence, by integrating the above identify 
in $t\in [0,T]$, $T>0$, we arrive at
$$
\iint \abs{\ue(z+h,t)-\ue(z,t)}^4\, dz\,dt
\lesssim 1+ J_1+J_2+J_3,
$$
where
\begin{align*}
	& J_1 = -\iint \eps \pxx \Delta_h\ue 
	\left(\int_x^\infty\Delta_h \eta(\ue)\,dy\right)
	\,dx\,dt,
	\\ &
	J_2= -\iint \eps \pyy \Delta_h\eta(\ue) 
	\left(\int_{-\infty}^y \Delta_h \ue\,dx\right)
	\,dy\,dt,
	\\ &
	J_3= \iint (\Delta_h \mu_\eps)(t,y)
	\left(\int_{-\infty}^y \Delta_h 
	\ue\,dx\right)\,dy\,dt.
\end{align*}
Utilizing the assumed bounds on $\ue$, we can 
estimate these three terms following the approach 
outlined in \cite{Karlsen:2023aa}. The final result is 
that $\abs{J_i} \lesssim h$ for $i=1,2,3$, which implies that
$$
\iint  \abs{\ue(z+h,t+\tau)-\ue(z,t)}^4\, dz \dt \lesssim h.
$$
A similar temporal translation estimate can be derived for $\ue$.  
This brings us to the end of the outline 
detailing the proof for the vanishing viscosity 
approximation $\ue$.

%-----------------------------------------------------------------------------------------

\section{Preliminary results}\label{sec:prelim-est}

The following result is taken from \cite[Lemma 4.1]{Karlsen:2004sn}.
\begin{lemma}[monotonicity and $L^\infty$ estimate]
\label{th:monotonicity} 
Suppose the CFL condition \eqref{ass:CFLkappa} holds. 
Then the Lax-Friedrichs scheme \eqref{LxFr_scheme} is monotone. 
Moreover, the computed approximations satisfy 
$\ud(x,t) \in [a,b]$ for all $x$ and all $t\geq 0$.
\end{lemma}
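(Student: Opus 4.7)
The plan is to prove the two assertions separately, starting with monotonicity and then using it to deduce the invariant-region bound.

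For monotonicity, I would view the update formula \eqref{LxFr_scheme} as
$U_j^{n+1} = H\bigl(k_{j-1}^n, k_{j+1}^n; U_{j-1}^n, U_{j+1}^n\bigr)$
where
\begin{equation*}
	H(k_-, k_+; u_-, u_+)
	= \tfrac{1}{2}(u_- + u_+)
	- \tfrac{\lambda}{2}\bigl(f(k_+, u_+) - f(k_-, u_-)\bigr),
\end{equation*}
and check that $\partial_{u_-} H \ge 0$ and $\partial_{u_+} H \ge 0$ on the relevant domain. A direct differentiation gives $\partial_{u_-} H = \tfrac{1}{2} + \tfrac{\lambda}{2}\pu f(k_-, u_-)$ and $\partial_{u_+} H = \tfrac{1}{2} - \tfrac{\lambda}{2}\pu f(k_+, u_+)$. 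Both expressions are nonnegative as long as $\lambda \norm{\pu f} \le 1$, which follows at once from the CFL condition \eqref{ass:CFLkappa} (in fact with a margin $\kappa/2$). This yields the monotonicity claim.

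For the $L^\infty$ bound, I would argue by induction on $n$, starting from the observation that the cell average \eqref{initdata} automatically inherits the pointwise bounds $a \le u_0 \le b$, so that $U_j^0 \in [a,b]$ for every even $j$. For the inductive step, suppose $U_{j-1}^n, U_{j+1}^n \in [a,b]$. Because $H$ is nondecreasing in each of its last two arguments (by monotonicity) and the data satisfies \eqref{ass:f_to}, namely $f(k, a) = f(k, b) = 0$ for all $k \in [\alpha,\beta]$, I compute the two extreme values
\begin{equation*}
	H(k_-, k_+; b, b) = b - \tfrac{\lambda}{2}\bigl(f(k_+, b) - f(k_-, b)\bigr) = b,
	\quad
	H(k_-, k_+; a, a) = a.
\end{equation*}
Applying monotonicity to the inequalities $a \le U_{j\pm 1}^n \le b$ then gives $a \le U_j^{n+1} \le b$, completing the induction.

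The argument is largely mechanical, but the key conceptual point — and the only step where something could go wrong — is that the CFL condition \eqref{ass:CFLkappa} must be strong enough to ensure both partial derivatives of $H$ have the right sign simultaneously. Since \eqref{ass:CFLkappa} provides a strict bound $1-\kappa$, this is comfortable. The invariant-region conclusion then extends from the grid values $U_j^n$ to the piecewise-constant reconstruction $\ud(x,t)$ defined by \eqref{eq:ud_def}, since it is a convex combination (in fact, just equal on each cell) of the grid values. I would expect no serious obstacle; this lemma is a standard ``monotone + invariant region'' check tailored to the staggered Lax-Friedrichs discretization.
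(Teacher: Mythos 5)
Your proof is correct: the derivative computation $\partial_{u_\pm}H = \tfrac12 \pm \tfrac{\lambda}{2}\pu f(k_\pm,u_\pm) \ge \tfrac{\kappa}{2} \ge 0$ under \eqref{ass:CFLkappa}, together with $H(k_-,k_+;a,a)=a$ and $H(k_-,k_+;b,b)=b$ from \eqref{ass:f_to} and the induction from the cell averages \eqref{initdata}, is exactly the standard monotone-scheme/invariant-region argument. The paper does not reprove this lemma but cites it from \cite{Karlsen:2004sn}, where the same approach is used, so there is nothing to flag.
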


In \cite{Karlsen:2004sn} we employed the single entropy $S(u) = u^2/2$. 
We wish to allow for an entropy of the form $S(k,u)$. 
The first part (inequality \eqref{eq:dissipation_new}) of the lemma that follows 
is a generalization of \cite[ Lemma 4.3]{Karlsen:2004sn}, which 
we prove by modifying the proof in \cite{Karlsen:2004sn}.
The second part of the lemma (inequality \eqref{ent_abs}) 
is new, and is required for the analysis that follows.

\begin{lemma}\label{lem:dissipation_new}
Let $(S,Q)$ be an arbitrary entropy/entropy 
flux pair defined by \eqref{def_ent_pair}, satisfying 
the uniform convexity condition \eqref{S_convex}. 
With $k_{j-1}^n,U_{j-1}^n$ and $k_{j+1}^n,U_{j+1}^n$ 
given, compute $U_j^{n+1}$ by \eqref{LxFr_scheme}. Define
\begin{equation*}
	\begin{split}
		\Psi_j^n
		&= S\left(k_j^{n+1},U_j^{n+1}\right) 
		- \frac12 \left(S\left(k_{j-1}^{n},U_{j-1}^{n}\right)
		+S\left(k_{j+1}^{n},U_{j+1}^{n}\right)\right)\\
		& \qquad + \frac{\lambda}{2} \left(Q\left(k_{j+1}^n,U_{j+1}^n\right)
		-Q\left(k_{j-1}^n ,U_{j-1}^n\right)\right), \quad j \in \Omega_n'.
	\end{split}
\end{equation*}
Then
\begin{equation}
   \label{eq:dissipation_new}
   \Psi_j^n
      \le - \frac{\gamma \kappa^2}{8} \left(U_{j+1}^n - U_{j-1}^n\right)^2
      + \lambda K_1 \abs{k_{j+1}^n - k_{j-1}^n}
      +\tilde{K}_1 \abs{k_{j}^{n+1} - k_{j-1}^n},
\end{equation}
where $K_1$ is a constant that is independent of $\Dx$.

In addition, for some constants $K_2$, $K_3$, and $K_4$ 
that are independent of $\Dx $,
\begin{equation}\label{ent_abs}
\abs{\Psi_j^n }
      \le K_2 \abs{U_{j+1}^n - U_{j-1}^n}^2 + K_3 \abs{k_{j+1}^n - k_{j-1}^n} 
      + K_4 \abs{k_j^{n+1}- \f12 \left(k_{j-1}^n + k_{j+1}^n \right)}.
\end{equation}
The estimate \eqref{ent_abs} holds without the 
convexity condition \eqref{S_convex}. 
\end{lemma}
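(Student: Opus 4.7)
The plan is to generalize the proof of Lemma 4.3 in \cite{Karlsen:2004sn} by treating the $k$-dependence of $S$, $Q$, and $f$ as a perturbation of the constant-coefficient case. Set $\bar U := \tfrac12(U_{j-1}^n + U_{j+1}^n)$ so that the scheme \eqref{LxFr_scheme} reads $U_j^{n+1} = \bar U - \tfrac{\lambda}{2}(f_{j+1}^n - f_{j-1}^n)$. For \eqref{eq:dissipation_new} I freeze the coefficient at a reference value (taking $\bar k := k_{j-1}^n$ is convenient) and write $\Psi_j^n = \Psi_j^{n,\mathrm{hom}}(\bar k) + R_j^n$, where $\Psi_j^{n,\mathrm{hom}}(\bar k)$ is the entropy residual with every occurrence of $k_{j-1}^n,k_{j+1}^n,k_j^{n+1}$ replaced by $\bar k$, and $R_j^n$ collects the $k$-perturbation remainders. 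For \eqref{ent_abs} the natural reference is instead the midpoint $\bar k := \tfrac12(k_{j-1}^n + k_{j+1}^n)$, which is what produces the specific $\bigl|k_j^{n+1} - \tfrac12(k_{j-1}^n + k_{j+1}^n)\bigr|$ structure of the $K_4$-term.

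For the homogeneous part, I Taylor-expand $S(\bar k, U_j^{n+1})$ to second order around $\bar U$ and $\tfrac12\bigl(S(\bar k, U_{j-1}^n) + S(\bar k, U_{j+1}^n)\bigr)$ around $\bar U$ as well, then substitute $U_j^{n+1} - \bar U = -\tfrac{\lambda}{2}(f_{j+1}^n - f_{j-1}^n)$. The entropy/entropy flux relation \eqref{def_ent_pair} is used to cancel the first-order piece of $S$ against the flux difference $\tfrac{\lambda}{2}(Q_{j+1}^n - Q_{j-1}^n)$, so that only quadratic (and higher) differences in $U$ remain. Invoking the uniform convexity $\partial_{uu}S \ge \gamma$ and the CFL condition $\lambda\|\partial_u f\| \le 1-\kappa$ (which forces $1 - \lambda^2(\partial_u f)^2 \ge 2\kappa-\kappa^2 \ge \kappa^2$) produces the dissipative term $-\tfrac{\gamma\kappa^2}{8}(U_{j+1}^n - U_{j-1}^n)^2$, exactly as in the $S(u)=u^2/2$ case treated in \cite{Karlsen:2004sn}.

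The remainder $R_j^n$ is controlled by first-order Taylor expansions in $k$ around $\bar k$, with derivatives bounded by $\|\partial_k S\|, \|\partial_k Q\|, \|\partial_k f\|$; Lemma \ref{th:monotonicity} keeps every $U$-value inside the compact region $[a,b]$ where these norms are finite. The $\lambda$-weighting of $|k_{j+1}^n-k_{j-1}^n|$ in \eqref{eq:dissipation_new} arises because those $k$-differences enter through the fluxes $f_{j\pm 1}^n$, whereas the $\tilde K_1\,|k_j^{n+1} - k_{j-1}^n|$ term comes from the $k$-variation of $S(k_j^{n+1},\cdot)$ at the new time level. For the weaker bound \eqref{ent_abs} the sign of the quadratic term is irrelevant, so the $C^2$ regularity of $S$ together with the $C^1$ regularity of $Q$ and $f$ is sufficient to absorb every $u$-difference into $K_2\,|U_{j+1}^n-U_{j-1}^n|^2$ and every $k$-difference into the stated form; the midpoint choice of $\bar k$ makes the zeroth-order $k$-piece telescope so that only $\bigl|k_j^{n+1}-\tfrac12(k_{j-1}^n+k_{j+1}^n)\bigr|$ appears. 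This is also why \eqref{ent_abs} does not require the convexity assumption \eqref{S_convex}.

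The main obstacle is the careful bookkeeping needed to extract the sharp constant $\gamma\kappa^2/8$: the second-order Taylor remainders of $S(\bar k, \cdot)$ at two different base points must be combined with the flux-difference identity, and the CFL condition has to be applied at exactly the right step to preserve the $\kappa^2$ factor. Since the analogous computation has already been carried out in \cite{Karlsen:2004sn} for the particular entropy $S(u)=u^2/2$, and since the $k$-dependence of $S$, $Q$, $f$ enters only through the remainder $R_j^n$ already accounted for, the remaining work amounts to a careful but essentially mechanical adaptation of the argument in \cite{Karlsen:2004sn}.
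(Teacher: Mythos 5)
Your overall architecture --- isolate a ``homogeneous'' entropy residual, extract the dissipative term from convexity plus CFL, and sweep all $k$-dependence into first-order remainders bounded by $\norm{\pk S}$, $\norm{\pk Q}$, $\norm{\pk f}$ times the relevant $k$-increments --- matches the paper's treatment of the perturbation terms, and your handling of \eqref{ent_abs} (where no sign is needed, only $C^2$ bounds) is fine. The gap is in the central step: the claim that a second-order Taylor expansion of $S(\bar k,\cdot)$ about $\bar U=\f12(U_{j-1}^n+U_{j+1}^n)$, followed by cancelling the first-order piece against $\frac{\lambda}{2}(Q_{j+1}^n-Q_{j-1}^n)$ via \eqref{def_ent_pair} and then invoking $\puu S\ge\gamma$ and the CFL condition, produces the dissipative term ``exactly as in the $S=u^2/2$ case.'' If you carry this out with $\delta:=\f12(U_{j+1}^n-U_{j-1}^n)$, midpoint convexity gives the gain $-\frac{\gamma}{2}\delta^2$, but the two leftover pieces are $\frac{\lambda}{2}\int_{U_{j-1}^n}^{U_{j+1}^n}\bigl(\pu S(u)-\pu S(\bar U)\bigr)\pu f(u)\,du$ and the second-order remainder of $S$ at $U_j^{n+1}$, which term by term are only bounded by $\frac{1-\kappa}{2}\norm{\puu S}\,\delta^2$ and $\frac{(1-\kappa)^2}{2}\norm{\puu S}\,\delta^2$ respectively. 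Since $\norm{\puu S}\ge\gamma$, these can dominate the gain, and no negative-definite term survives a term-by-term estimate. (Even for $S=u^2/2$, where $\norm{\puu S}=\gamma$, the three coefficients sum to $-\f12\bigl(1-(1-\kappa)-(1-\kappa)^2\bigr)$, which is positive for small $\kappa$; the quadratic computation succeeds only through exact algebraic cancellation, not through separately estimated Taylor remainders.)

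The paper avoids this by a different device: it interpolates $w(s)=sU_{j-1}^n+(1-s)U_{j+1}^n$, forms the one-parameter entropy residual $\Phi(s)$ built from $w(s)$ and the corresponding partial update $v(s)$, and computes $\Phi'(s)$ in closed form as $\frac14\,\mF\,\mG\,\bigl(1+\lambda \pu f\bigr)\bigl(U_{j-1}^n-U_{j+1}^n\bigr)^2 s$ plus an explicit term of size $\abs{k_{j+1}^n-k_{j-1}^n}$, where $\mF\ge\kappa$ and $1+\lambda\pu f\ge\kappa$ come from the CFL condition and $\mG\ge\gamma$ from convexity. The sign and the constant $\frac{\gamma\kappa^2}{8}$ then fall out of integrating this factorized expression over $s\in[0,1]$; each factor is controlled individually, so no cancellation between independently bounded remainders is ever required. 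Your proposal defers exactly this factorization to a ``mechanical adaptation'' of the quadratic case, but for a general uniformly convex $S(k,u)$ the adaptation is not mechanical along the Taylor-expansion route: you would have to reorganize the computation into something equivalent to the paper's $\Phi'(s)$ identity, and that reorganization is the actual content of the proof.
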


\begin{proof}
Let us introduce the functions $w,v,\Phi:[a,b]\to \R$ defined by
\begin{equation*}
	\begin{split}
		w(s) &= s U_{j-1}^n +(1-s)U_{j+1}^n,\\
      		v(s) &= \frac12\left(w(s)+U_{j+1}^n\right)
      		-\frac{\lambda}{2}\left(f\left(k_{j+1}^n,U_{j+1}^n\right)
      		-f\left(k_{j-1}^n,w(s)\right)\right),\\
      		\Phi(s) &= \frac12\left( S(k_{j-1}^n,w(s))
		+ S\left(k_{j+1}^n,U_{j+1}^n\right)\right)
		\\ & \qquad\qquad 
		+\frac{\lambda}{2}\left(Q\left(k_{j-1}^n,w(s)\right)
		-Q\left(k_{j+1}^n,U_{j+1}^n\right)\right) -S(k_{j-1}^n,v(s)).
	\end{split}
\end{equation*}
We collect the following elementary facts about
these functions 
in one place before continuing with the proof:
\begin{equation*}
	\begin{split}
		w(0) &= U_{j+1}^n, \quad w(1) = U_{j-1}^n, \quad w'(s)
		=U_{j-1}^n - U_{j+1}^n, \\
		v(0) &= U_{j+1}^n-\frac{\lambda}{2}
		\left(f\left(k_{j+1}^n,U_{j+1}^n\right)
		-f\left(k_{j-1}^n,U_{j+1}^n\right)\right),
		\quad v(1) = U_j^{n+1}, \\
		w'(s) & = \frac12\left(1+\lambda \pu f\left(k_{j-1}^n,w(s)\right)\right)
		\left(U_{j-1}^n - U_{j+1}^n\right),\\
		v'(s) &=\frac12\left(1+\lambda \pu f\left(k_{j-1}^n,w(s)\right)\right),\\
		\Phi(0) &= \f12 S\left(k_{j-1}^n,U_{j+1}^n\right)
		+\f12 S\left(k_{j+1}^n,U_{j+1}^n\right)
		-\frac{\lambda}{2} \left(Q\left(k_{j+1}^n,U_{j+1}^n\right)
		-Q\left(k_{j-1}^n,U_{j+1}^n\right)\right)
		\\ & \qquad \qquad \qquad
		-S\left(k_{j-1}^n,U_{j+1}^n
		-\frac{\lambda}{2}\left(f\left(k_{j+1}^n,U_{j+1}^n\right)
		-f\left(k_{j-1}^n,U_{j+1}^n\right)\right)\right),\\
		\Phi(1) &= \frac12\left(S\left(k_{j-1}^n,U_{j-1}^n\right)
		+S\left(k_{j+1}^n,U_{j+1}^n\right) \right)
		\\ & \qquad\qquad
		+\frac{\lambda}{2}\left(Q\left(k_{j-1}^n,U_{j-1}^n\right)
		-Q\left(k_{j+1}^n,U_{j+1}^n\right)\right)
		-S\left(k_{j-1}^n,U_j^{n+1}\right).
	\end{split}
\end{equation*}

We seek an estimate of $\Phi'(s)$.
It is readily verified that
\begin{equation*}%\label{eq:laxbound1}
	\Phi'(s) =\frac12 
	\left(1 + \lambda \pu f\left(k_{j-1}^n,w(s)\right)\right) \mG
	\left(U_{j-1}^n - U_{j+1}^n\right) (w(s)-v(s)),
\end{equation*}
where
\begin{equation*}
	\mG := 
	\begin{cases}
		{\pu S(k_{j-1}^n, w(s)) - \pu S(k_{j-1}^n,v(s)) \over w(s) - v(s)}, 
		&\quad w(s) \ne v(s),\\
		\puu S(k_{j-1}^n, w(s)), &\quad w(s) = v(s),
	\end{cases}
\end{equation*}
and
\begin{align*}
	w(s)-v(s)
	& = \frac12\left(1- \lambda\frac{f\left(k_{j-1}^n,w(s)\right)
	-f\left(k_{j-1}^n,U_{j+1}^n\right)}{w(s)-U_{j+1}^n}\right)
	\left(w(s)-U_{j+1}^n\right) 
	\\ & \qquad \qquad
	+\frac{\lambda}{2} \left(f\left(k_{j+1}^n,U_{j+1}\right)
	-f\left(k_{j-1}^n,U_{j+1}\right)\right),
\end{align*}
so that, with
$\mF:=1- \lambda \left(f\left(k_{j-1}^n,w(s)\right)
-f\left(k_{j-1}^n,U_{j+1}^n\right)\right)/\left(w(s)-U_{j+1}^n\right)$,
\begin{equation*}%\label{eq:Laxbound1A}
	\begin{split}
		w(s)-v(s) &= \frac{\mF}{2} \left(w(s)-U_{j+1}^n\right)
		+\frac{\lambda}{2} \left(f\left(k_{j+1}^n,U_{j+1}^n\right)
		-f\left(k_{j-1}^n,U_{j+1}^n\right) \right)
		\\ &=\frac{\mF}{2}\left(U_{j-1}^n-U_{j+1}^n\right)s
		+\frac{\lambda}{2}\left(f\left(k_{j+1}^n,U_{j+1}^n\right)
		-f\left(k_{j-1}^n,U_{j+1}^n\right) \right).
	\end{split}
\end{equation*}
This yields 
\begin{equation}\label{Phi_prime}
	\begin{split}
		\Phi'(s)&={1 \over 4} \mF \mG \left(1 + \lambda \pu f(k_{j-1}^n,w(s)) \right) 
		\left(U_{j-1}^n-U_{j+1}^n \right)^2 s 
		\\ & \qquad
		+ {\lambda \over 4}  \mG \left(1 + \lambda \pu f(k_{j-1}^n,w(s)) \right) 
		\left(U_{j-1}^n-U_{j+1}^n \right)
		\\ & \qquad \qquad 
		\times \left(f(k_{j+1}^n,U_{j+1}^n) - f(k_{j-1}^n,U_{j+1}^n) \right).
	\end{split}
\end{equation}

\noindent
As a consequence of the CFL condition \eqref{ass:CFLkappa}, 
$\mF \geq \kappa$, and
\begin{equation*}
	1 + \lambda \pu f\left(k_{j-1}^n,w(s)\right) 
	\ge \kappa, \quad \abs{1 + \lambda \pu f\left(k_{j-1}^n,w(s)\right)} \le 2.
\end{equation*}
Also, $\mG \ge \gamma$ and $\abs{\mG} \le \norm{\puu S}$. Thus,
\begin{equation*}
	\Phi'(s) \geq \frac{\gamma \kappa^2}{4}
	\left(U_{j-1}^n-U_{j+1}^n\right)^2 s -
	{\lambda \over 2} (b-a) 
	\norm{\puu S} \norm{\pk f}\abs{k_{j+1}^n - k_{j-1}^n}.
\end{equation*}

\noindent
Integrating this last inequality from $0$ to $1$ gives
\begin{equation*}
	\Phi(1) - \Phi(0) \geq
	\frac{\gamma \kappa^2}{8} \left(U_{j-1}^n-U_{j+1}^n\right)^2
	- {\lambda \over 2} (b-a) \norm{\puu S} 
	\norm{\pk f}\abs{k_{j+1}^n - k_{j-1}^n}.
\end{equation*}
Combining this with the fact that
\begin{equation*}
	\Phi(1) = - \Psi_j^n + S(k_j^{n+1},U_j^{n+1}) 
	- S(k_{j-1}^n,U_j^{n+1}),
\end{equation*}
we obtain
\begin{equation*}
	\begin{split}
		\Psi_j^n 
		&\le -\frac{\gamma \kappa^2}{8} \left(U_{j-1}^n-U_{j+1}^n\right)^2
		+ {\lambda \over 2} (b-a) \norm{\puu S} 
		\norm{\pk f}\abs{k_{j+1}^n - k_{j-1}^n}
		\\ & \qquad 
		+S(k_j^{n+1},U_j^{n+1}) - S(k_{j-1}^n,U_j^{n+1}) - \Phi(0).
	\end{split}
\end{equation*}
Using $\abs{S(k_j^{n+1},U_j^{n+1}) - S(k_{j-1}^n,U_j^{n+1})} 
\le \norm{\partial_k S}\abs{k_{j}^{n+1} - k_{j-1}^{n}}$, the 
proof of  \eqref{eq:dissipation_new} will be complete as soon
as we show that $\abs{\Phi(0)}$ is bounded by a constant
times $\abs{k_{j+1}^n - k_{j-1}^n}$.
A straightforward calculation results in
\begin{equation*}
	\begin{split}
		\Phi(0)  &= S(k_{j-1}^n, U_{j+1}^n)
		-S\left(k_{j-1}^n, U_{j+1}^n
		-{\lambda \over 2} (f(k_{j+1}^n,U_{j+1}^n) - f(k_{j-1}^n,U_{j+1}^n))\right)
		\\ & \qquad 
		+\f12 \left(S(k_{j+1}^n,U_{j+1}^n) - S(k_{j-1}^n,U_{j+1}^n) \right)
		-{\lambda \over 2} 
		\left(Q(k_{j+1}^n,U_{j+1}^n)-Q(k_{j-1}^n,U_{j+1}^n) \right),
	\end{split}
\end{equation*}
which yields the inequality
\begin{equation}\label{Phi_0_est}
	\abs{\Phi(0)} \le \left( {\lambda \over 2} \norm{\pu S} \norm{\pk f}
	+ \f12 \norm{\pk S}
	+{\lambda \over 2}  \norm{\pk Q} \right) 
	\abs{k_{j+1}^n - k_{j-1}^n}.
\end{equation}
Recalling that
$\abs{k_{j+1}^n - k_{j-1}^n} \le \beta - \alpha$, cf.~\eqref{ass:k},
the inequality \eqref{Phi_0_est} provides the desired bound 
on $\abs{\Phi(0)}$ and the proof of \eqref{eq:dissipation_new} 
is complete.

For the proof of \eqref{ent_abs},
\begin{equation*}
	\begin{split}
		&\Biggl|S\left(k_j^{n+1},U_j^{n+1}\right) 
		- \frac12 \left(S\left(k_{j-1}^{n},U_{j-1}^{n}\right)
		+S\left(k_{j+1}^{n},U_{j+1}^{n}\right)\right)
		\\ & \qquad\qquad 
		+ \frac{\lambda}{2} \left(Q\left(k_{j+1}^n,U_{j+1}^n\right)
		-Q\left(k_{j-1}^n ,U_{j-1}^n\right)\right)\Biggr| 
		\\ & \quad 
		= \abs{\Phi(1) + S(k_{j}^{n+1},U_j^{n+1}) - S(k_{j-1}^n,U_j^{n+1})}
		\le  \abs{\Phi(1)}+\norm{\pk S} \abs{k_{j}^{n+1} - k_{j-1}^n}
		\\ &\quad \le  \abs{\Phi(1)}+\norm{\pk S} \left( 
		\f12 \abs{k_{j+1}^n - k_{j-1}^n} + \abs{k_j^{n+1}- \f12 
		\left(k_{j+1}^n + k_{j-1}^n \right)}\right).
	\end{split}
\end{equation*}
The proof of \eqref{ent_abs} will be complete if we can 
obtain a suitable estimate of $\abs{\Phi(1)}$.
We have
\begin{equation*}
	\abs{\Phi(1)} \le \max_{s \in [0,1]} \abs{\Phi'(s)} + \abs{\Phi(0)}.
\end{equation*}
Recalling \eqref{Phi_0_est}, it suffices to estimate 
$\max_{s \in [0,1]} \abs{\Phi'(s)}$.
Referring back to \eqref{Phi_prime}, and using 
$\abs{\mF} \le 2$, $\abs{\mG} \le \norm{\puu S}<\infty$, 
cf.~\eqref{ass:S_Q_1}, we find that
\begin{equation*}
	\max_{s \in [0,1]} \abs{\Phi'(s)}
	\le \norm{\puu S} \abs{U_{j+1}^n - U_{j-1}^n}^2
	+{\lambda \over 2} \norm{\puu S} (b-a) 
	\norm{\pk f} \abs{k_{j+1}^n - k_{j-1}^n},
\end{equation*}
which completes the proof of \eqref{ent_abs}.
\end{proof}

In what follows, we consider an arbitrary weight function 
$\chi \in C^1(\R) \cap L^1(\R)$ satisfying 
the properties in \eqref{weight_properties}. 
We will use the following abbreviations:
 \begin{equation*}
 	f_j^n = f(k_j^n,U_j^n), \quad 
	S_j^n = S(k_j,U_j^n), \quad 
	Q_j^n = Q(k_j^n,U_j^n), \quad 
	\chi_j = \chi(x_j).
 \end{equation*}
 Also, the following facts will be helpful.
For a fixed integer $i$, if $\{Z_j^n \}$ is bounded, then
\begin{equation*}
\Dx  \sumj \chi_{j+i} Z^n_j < \infty,
\end{equation*}
and
\begin{equation*}
\Dx  \sumj \chi_{j+i} Z^n_j 
= \Dx  \sumj \chi_j Z^n_j +O(i \Dx ).
\end{equation*}

The following result is the version of  \cite[Lemma 4.3]{Karlsen:2004sn}
that is appropriate for the assumptions of this paper.

\begin{lemma}\label{lemma_trans_sqd}
For $T>0$, $N = \lfloor T/ \Dt \rfloor$, we have the bounds
\begin{equation}
   \label{eq:tvsquared_weight}
   \begin{split}
      &\Dx \sum_{n=0}^N
      \sumjp
      \chi_{j-1} \left(U_{j+1}^n - U_{j-1}^n\right)^2\le C_1(T) < \infty,
      \\ & \sum_{n =0}^{N} \int_{\R}
      \chi(x) \left(u^{\D}(x,t^{n+1})-u^{\D}(x,t^n)  \right)^2 \dx
      \le C_2(T) < \infty,
   \end{split}
\end{equation}
where $C_1(T)$ and $C_2(T)$ are independent of $\Delta$.
\end{lemma}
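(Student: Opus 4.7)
The plan is to derive \eqref{eq:tvsquared_weight}$_1$ from Lemma \ref{lem:dissipation_new} applied with the $k$-independent quadratic entropy $S(u) = u^2/2$, which is uniformly convex with $\gamma = 1$; the entropy flux $Q(k,u)$ is determined by \eqref{def_ent_pair}. Since $\pk S \equiv 0$, the term $\tilde{K}_1\abs{k_j^{n+1} - k_{j-1}^n}$ in \eqref{eq:dissipation_new} vanishes (as inspection of the proof of Lemma \ref{lem:dissipation_new} shows), so the dissipation estimate collapses to
\begin{equation*}
	\tfrac{\kappa^2}{8}\bigl(U_{j+1}^n - U_{j-1}^n\bigr)^2 \le -\Psi_j^n + \lambda K_1 \abs{k_{j+1}^n - k_{j-1}^n}.
\end{equation*}
I multiply by $\Dx\,\chi_{j-1}$ and sum over $j \in \Omega_n'$ and $n = 0,\dots,N$, leaving two contributions to control on the right.

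The $k$-difference sum, using $\lambda\Dx = \Dt$ and $\chi \in \Linf(\R)$, amounts to a Riemann-sum approximation of $\int_0^T \mathrm{TV}_x\, k(\cdot,t)\,dt$ and is therefore bounded by a constant times $\abs{k}_{BV(\Do)}$. The entropy sum $\sum\chi_{j-1}\Psi_j^n$ must be telescoped: re-indexing so that its three parts land on a common sublattice $\Omega_n$, its bulk rearranges into a combination of weight-difference terms, namely $\bigl[\chi_{j-1} - \tfrac{1}{2}(\chi_j + \chi_{j-2})\bigr] S_j^n$ and $(\chi_{j-2} - \chi_j) Q_j^n$, plus boundary contributions at $n = 0$ and $n = N+1$. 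By \eqref{weight_properties}, each weight difference is $O(\Dx\,\chi_j)$, and since $\abs{S_j^n}, \abs{Q_j^n}$ are uniformly bounded (by Lemma \ref{th:monotonicity} and the continuity of $S, Q$ on $\mU$), each resulting double sum is dominated by $\Dx\,N\bigl(\Dx\sum_j\chi_j\bigr) \sim T\,\norm{\chi}_{L^1}/\lambda$, while the boundary terms contribute $O(\norm{\chi}_{L^1})$. This yields the first estimate.

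For \eqref{eq:tvsquared_weight}$_2$ I exploit the staggered-mesh structure: on $[x_{j-1},x_{j+1})\times[t^n,t^{n+1})$ with $j \in \Omega_n$, the approximation $\ud(\cdot,t^n)$ equals the constant $U_j^n$, whereas $\ud(x,t^{n+1})$ equals $U_{j-1}^{n+1}$ on $[x_{j-1},x_j)$ and $U_{j+1}^{n+1}$ on $[x_j,x_{j+1})$, both defined since $j\pm 1 \in \Omega_{n+1}$. The scheme \eqref{LxFr_scheme} then gives
\begin{equation*}
	U_{j\pm 1}^{n+1} - U_j^n
	= \tfrac{1}{2}\bigl(U_{j\mp 2}^n - U_j^n\bigr)
	\mp \tfrac{\lambda}{2}\bigl(f_j^n - f_{j\mp 2}^n\bigr),
\end{equation*}
and, splitting $f_j^n - f_{j\mp 2}^n$ into a $U$-difference (bounded by $\norm{\pu f}\abs{U_j^n - U_{j\mp 2}^n}$) and a $k$-difference (bounded by $\norm{\pk f}\abs{k_j^n - k_{j\mp 2}^n}$), squaring and integrating against $\chi$ reduces the target to the first bound $C_1(T)$ (after an index relabelling and using $\chi_j \sim \chi_{j\pm 1}$ from \eqref{weight_properties}) plus a remainder $\Dx\sum_{n,j}\chi_j\abs{k_{j+2}^n - k_j^n}^2$, controlled via $\abs{k_{j+2}^n - k_j^n}^2 \le (\beta - \alpha)\abs{k_{j+2}^n - k_j^n}$ and the same BV argument as above.

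The principal obstacle I anticipate is the staggered-mesh bookkeeping combined with the shifted weight $\chi_{j-1}$: the time telescoping and spatial summation-by-parts must be arranged so that only first-order differences of $\chi$ appear in the leftover terms, since \eqref{weight_properties} furnishes only the first-order Lipschitz-type control $\abs{\chi(x+z) - \chi(x)} \lesssim \chi(x)\abs{z}$, and one must verify that the resulting prefactors $\Dx N$ and $\Dx\sum_j\chi_j$ indeed combine to a $\D$-independent bound.
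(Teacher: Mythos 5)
Your proof follows essentially the same route as the paper's: for the first bound you apply the dissipation estimate \eqref{eq:dissipation_new} with $S=u^2/2$, telescope the $S$- and $Q$-sums so that only first-order differences of $\chi$ (controlled by \eqref{weight_properties}) and $BV$-summable $k$-differences remain; for the second bound you use the staggered scheme \eqref{LxFr_scheme} to reduce the time increment to spatial differences of $U$ and $k$ and then invoke the first bound. Your observation that the $\tilde{K}_1$-term drops out because $\pk S\equiv 0$ is a harmless shortcut the paper does not take (it simply bounds that term by the $BV$ norm of $k$), and the index shifts in your identity for $U_{j\pm 1}^{n+1}-U_j^n$ should read $j\pm 2$ rather than $j\mp 2$, but neither point affects the validity of the argument.
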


\begin{proof}
For the first inequality of \eqref{eq:tvsquared_weight}, we use
\eqref{eq:dissipation_new} with $S=u^2/2$:
\begin{equation}\label{tvs_wt_1}
\begin{split}
&{\gamma \kappa^2 \over 8} \Dx \sum_{n=0}^N
\sumjp \chi_{j-1} \left(U_{j+1}^n - U_{j-1}^n\right)^2 
\\ & \quad \le \Dx \sum_{n=0}^N\sumjp  \chi_{j-1}
\left(- S_j^{n+1} + \f12 (S_{j-1}^n + S_{j+1}^n) \right) 
\\ & \quad \qquad 
- {\lambda \over 2} \Dx \sum_{n=0}^N
\sumjp \chi_{j-1}  (Q_{j+1}^n - Q_{j-1}^n)
 + \lambda K_1 \Dx \sum_{n=0}^N
 \sumjp \chi_{j-1} \abs{k_{j+1}^n - k_{j-1}^n}
 \\ & \quad \qquad
+ \tilde{K}_1 \Dx \sum_{n=0}^N 
\sumjp \chi_{j-1}  \abs{k_{j}^{n+1} - k_{j-1}^n}.
\end{split}
\end{equation}

For the first sum on the right side of \eqref{tvs_wt_1} 
we use the identity
\begin{equation*}%\label{tvs_wt_2}
\begin{split}
&\chi_{j-1} \left(- S_j^{n+1} + \f12 (S_{j-1}^n + S_{j+1}^n) \right)
\\ & \quad 
= - \chi_j S_j^{n+1} + \f12 (\chi_{j-1} S_{j-1}^n + \chi_{j+1}S_{j+1}^n) 
+ S_j^{n+1} (\chi_j - \chi_{j-1}) - \f12 S_{j+1}^n(\chi_{j+1} - \chi_{j-1}).
\end{split}
\end{equation*}
Summing over $n$ and $j$, the
terms $- \chi_j S_j^{n+1} + \f12 (\chi_{j-1} S_{j-1}^n 
+ \chi_{j+1}S_{j+1}^n)$ telescope. The result is
\begin{equation}\label{tvs_wt_3}
\begin{split}
& \Dx \sum_{n=0}^N\sumjp  \chi_{j-1}
\left(- S_j^{n+1} + \f12 (S_{j-1}^n + S_{j+1}^n) \right) 
\\ & \quad 
= -\Dx \sum_{j \in \Omega_{N+1}} \chi_j S_j^{N+1}
+\Dx  \sum_{j \in \Omega_0} \chi_j S_j^0\\
& \quad \qquad
+\Dx \sum_{n=0}^N \sumjp S_j^{n+1}(\chi_j - \chi_{j-1})
-\f12 \Dx \sum_{n=0}^N
\sumjp S_{j+1}^{n}(\chi_{j+1} - \chi_{j-1}).
\end{split}
\end{equation}
The desired bound for the first sum on the right side of \eqref{tvs_wt_1} 
then follows from \eqref{tvs_wt_3}, using \eqref{weight_properties}.

For the second sum on the right side of \eqref{tvs_wt_1} 
we use the identity
\begin{equation}\label{tvs_wt_4}
\chi_{j-1}(Q_{j+1}^n - Q_{j-1}^n)
 =(\chi_{j+1} Q_{j+1}^n - \chi_{j-1} Q_{j-1}^n) 
 - Q_{j+1}^n (\chi_{j+1} - \chi_{j-1}).
\end{equation}
Summing over $n$ and $j$, the contribution 
from the first part of the right side of \eqref{tvs_wt_4}
telescopes, resulting in
\begin{equation}\label{tvs_wt_5}
\Dx \sum_{n=0}^N
\sumjp  \chi_{j-1}  (Q_{j+1}^n - Q_{j-1}^n)
= -\Dx \sum_{n=0}^N
\sumjp  Q_{j+1}^n (\chi_{j+1} - \chi_{j-1}).
\end{equation}
The desired bound for the second sum then 
follows from \eqref{tvs_wt_5}, using \eqref{weight_properties}.

The third and fourth sums on the right side of \eqref{tvs_wt_1} 
are bounded in absolute value due to the fact 
that $\chi$ is bounded and $k \in BV(\R\times\R_+)$, and 
the proof of the first part of \eqref{eq:tvsquared_weight} is complete.

For the second part of \eqref{eq:tvsquared_weight},  
\begin{equation}\label{tvs_wt_6}
	\begin{split}
		&\sumnN \int_{\R}
		\chi(x) \left(u^{\D}(x,t^{n+1})-u^{\D}(x,t^n)  \right)^2 \dx
		\\ & \quad 
		= \sumnN \sumjp \int_{x_{j-1}}^{x_j} \chi(x) 
		\left(U_j^{n+1}-U_{j-1}^n \right)^2 \dx
		\\ & \quad \qquad 
		+\sumnN \sumjp \int_{x_j}^{x_{j+1}} \chi(x) 
		\left(U_j^{n+1}-U_{j+1}^n \right)^2 \dx.
	\end{split}
\end{equation}
We prove a bound for the first sum on the right side 
of \eqref{tvs_wt_6}. A bound for the second
sum is proven in a similar manner. To this end,
\begin{equation}\label{tvs_wt_7}
\begin{split}
 \left(U_j^{n+1}-U_{j-1}^n \right)^2
 &= \left(U_j^{n+1}- \f12 (U_{j-1}^n + U_{j+1}^n) 
 + \f12 (U_{j+1}^n - U^n_{j-1}) \right)^2\\
 & \le 2  \left(U_j^{n+1}- \f12 (U_{j-1}^n 
 + U_{j+1}^n)\right)^2 
 + \f12 \left(U_{j+1}^n - U^n_{j-1} \right)^2.
\end{split}
\end{equation}
Here we have used the inequality $(a+b)^2 \le 2 (a^2 + b^2)$.
Recalling \eqref{LxFr_scheme}, we can replace \eqref{tvs_wt_7} by
\begin{equation*}%\label{tvs_wt_8}
\begin{split}
& \left(U_j^{n+1}-U_{j-1}^n \right)^2 \\
 &\qquad \le 2 \lambda^2 
 \left(f(k_{j+1}^n,U_{j+1}^n) - f(k_{j-1}^n,U_{j-1}^n) \right)^2 
 + \f12 \left(U_{j+1}^n - U_{j-1}^n \right)^2.
\end{split}
\end{equation*}
We use 
\begin{equation*}
	\abs{f(k_{j+1}^n,U_{j+1}^n) - f(k_{j-1}^n,U_{j-1}^n)}
	\le \norm{\pu f}\abs{U_{j+1}^n - U_{j-1}^n} 
	+ \norm{\pk f}\abs{k_{j+1}^n - k_{j-1}^n},
\end{equation*}
along with an application of Young's inequality, 
$ab \le (a^2 + b^2)/2$, to obtain
\begin{equation}\label{tvs_wt_9}
	\begin{split}
		 \left(U_j^{n+1}-U_{j-1}^n \right)^2 
		 &\le d_1 \left(U_{j+1}^n - U_{j-1}^n \right)^2 
		 + d_2 \left(k_{j+1}^n - k^n_{j-1} \right)^2\\
		 &\le d_1 \left(U_{j+1}^n - U_{j-1}^n \right)^2 
		 + d_2 (\beta - \alpha)\abs{k_{j+1}^n - k^n_{j-1}},
	\end{split}
\end{equation}
where $d_1$ and $d_2$ are constants that 
are independent of $\Dx $.

The proof of the second part of \eqref{eq:tvsquared_weight} is 
completed by substituting \eqref{tvs_wt_9} into \eqref{tvs_wt_6}, 
along with a similar inequality for the second sum on
the right side of \eqref{tvs_wt_6}. The desired inequality then 
follows from the first part of \eqref{eq:tvsquared_weight}, the 
fact that $k \in BV(\R\times\R_+)$, and
\begin{equation*}
	\sumjp \int_{x_{j-1}}^{x_{j+1}} \abs{\chi_{j-1} - \chi(x)} \dx 
	\lesssim \Dx  \int_\R \chi(x) \, dx,
\end{equation*}
which follows from \eqref{weight_properties}.
\end{proof}

The following lemma is a mild adaptation 
of \cite[Lemma 5.2]{Golse:2013aa}. 
It will be used in the upcoming two sections.

\begin{lemma}\label{lemma_entropy_ineq}
Suppose the flux $f$ satisfies \eqref{ass:f_en}, \eqref{ass:f_to}, 
and the genuine nonlinearity assumption in \eqref{f_S_assumptions}. 
Suppose also that we are given an entropy/entropy flux 
pair $(S,Q)$ that satisfies \eqref{def_ent_pair}, \eqref{ass:S_Q_1}, 
and \eqref{f_S_assumptions}. Then for all 
$v,w \in [a,b]$ and $k \in [\alpha,\beta]$,
\begin{equation}\label{f_S_ineq}
	\begin{split}
		&(w-v)\left(Q(k,w)-Q(k,v) \right) 
		\\ & \qquad 
		- \left(S(k,w)-S(k,v) \right) \left(f(k,w)-f(k,v) \right)
		\ge C_{f,S} \abs{w-v}^{p_f + p_{S} + 2},
	\end{split}
\end{equation}
where $C_{f,S} = {C_f C_S 
\over (1+p_f+p_S) (2 + p_f + p_S)}$.
\end{lemma}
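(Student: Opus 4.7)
The first thing to note is that the left-hand side of \eqref{f_S_ineq} is symmetric under the swap $v\leftrightarrow w$ and vanishes trivially when $v=w$, so I assume without loss of generality that $v<w$. The plan is to rewrite the left-hand side as a symmetric, manifestly nonnegative double integral and then invoke the genuine nonlinearity of both $f$ and $S$ pointwise in the integrand.

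Using \eqref{def_ent_pair}, each of the three one-variable differences can be written as an integral:
\begin{equation*}
  f(k,w)-f(k,v)=\int_v^w \pu f(k,\xi)\,d\xi,\qquad S(k,w)-S(k,v)=\int_v^w \pu S(k,\xi)\,d\xi,
\end{equation*}
and $Q(k,w)-Q(k,v)=\int_v^w \pu S(k,\xi)\,\pu f(k,\xi)\,d\xi$. Writing $(w-v)=\int_v^w d\eta$, applying Fubini, and then symmetrizing by swapping the dummy variables $\xi\leftrightarrow\eta$, I obtain the identity
\begin{equation*}
  \mathrm{LHS}=\tfrac{1}{2}\int_v^w\!\!\int_v^w \bigl(\pu S(k,\xi)-\pu S(k,\eta)\bigr)\bigl(\pu f(k,\xi)-\pu f(k,\eta)\bigr)\,d\xi\,d\eta.
\end{equation*}

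Once this identity is in hand, the genuine nonlinearity assumption \eqref{f_S_assumptions} does the rest. For any $\xi,\eta\in[v,w]$, both bracketed factors share the sign of $\xi-\eta$ and are bounded below in absolute value by $C_S|\xi-\eta|^{p_S}$ and $C_f|\xi-\eta|^{p_f}$ respectively, so their product is at least $C_f C_S |\xi-\eta|^{p_f+p_S}$ throughout the square $[v,w]^2$. A routine calculation, via the change of variables $\xi-v\mapsto\xi$, $\eta-v\mapsto\eta$, yields
\begin{equation*}
  \int_v^w\!\!\int_v^w |\xi-\eta|^{p_f+p_S}\,d\xi\,d\eta=\frac{2(w-v)^{p_f+p_S+2}}{(1+p_f+p_S)(2+p_f+p_S)},
\end{equation*}
which produces the claimed lower bound with the stated constant $C_{f,S}$. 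No serious obstacle is anticipated: the only non-mechanical ingredient is the symmetrization identity for $\mathrm{LHS}$, whose availability hinges entirely on the compatibility relation $\pu Q=\pu S\,\pu f$ that makes $Q$ the entropy flux associated with $S$.
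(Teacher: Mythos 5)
Your proof is correct and follows essentially the same route as the paper: both reduce the left-hand side, via $\pu Q=\pu S\,\pu f$ and symmetrization in the two dummy variables, to $\tfrac12\iint_{[v,w]^2}\bigl(\pu S(k,\xi)-\pu S(k,\eta)\bigr)\bigl(\pu f(k,\xi)-\pu f(k,\eta)\bigr)\,d\xi\,d\eta$ and then apply \eqref{f_S_assumptions} pointwise (the paper integrates over the triangle $\xi\le\zeta$ and doubles, you integrate $|\xi-\eta|^{p_f+p_S}$ over the full square; these are identical computations). The constant $C_{f,S}$ comes out the same.
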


\begin{remark}\normalfont 
\label{remark_eta_eq_f}
Suppose $f \in C^2([\alpha,\beta] \times [a,b])$ 
and $\puu f(k,\xi) \ge \gamma >0$ for all 
$(k,\xi) \in [\alpha,\beta] \times [a,b]$. If we 
choose $S = f$ as the entropy, then 
$C_f=C_S=\gamma$, $p_f = p_S = 1$, 
$C_{f,S}=\gamma^2/12$, and thus
\begin{equation*}
	(w-v)\left(Q(k,w)-Q(k,v) \right)
	-\left(f(k,w)-f(k,v) \right)^2
	\ge {\gamma^2 \over {12}} \abs{w-v}^4.
\end{equation*}
\end{remark}

\begin{proof}
A straightforward calculation using \eqref{def_ent_pair} yields
\begin{equation}\label{cvx_ineq_1}
\begin{split}
&\int_v^w \int_v^w \pu S(k,\zeta)\bigl(\pu f(k,\zeta)-\pu f(k,\xi) \bigr) \, d\xi \,d\zeta\\
&\qquad = (w-v) \left(Q(k,w)-Q(k,v) \right)
-\left(S(k,w)-S(k,v) \right)  \left(f(k,w)-f(k,v) \right).
\end{split}
\end{equation}
Let 
\begin{equation*}
\Lambda(\xi,\zeta) = \bigl(\pu S(k,\zeta)-\pu S(k,\xi)\bigr)
\bigl(\pu f(k,\zeta)-\pu f(k,\xi) \bigr).
\end{equation*}
For now assume that $w\ge v$. Let $R:=[v,w]\times [v,w]$, 
and write $R = R_1 \cup R_2$, where
\begin{equation*}
R_1=\{ (\xi,\zeta) \in R: \xi>\zeta \}, \quad 
R_2 =\{ (\xi,\zeta) \in R: \xi \le \zeta \}.
\end{equation*}
By combining \eqref{cvx_ineq_1} with the version that 
results by swapping $\xi$ and $\zeta$, we obtain
\begin{equation}\label{cvx_ineq_2}
\begin{split}
&(w-v) \left(Q(k,w)-Q(k,v) \right) - \left(S(k,w)-S(k,v) \right)
\left(f(k,w)-f(k,v) \right)\\
&\qquad =\f12 \dint_R
\Lambda(\xi,\zeta)\, d\xi \,d\zeta.
\end{split}
\end{equation}
Note that $\Lambda$ is symmetric, i.e., 
$\Lambda(\xi,\zeta)= \Lambda(\zeta,\xi)$. Thus
\begin{equation}\label{cvx_ineq_2a}
\dint_R \Lambda(\xi,\zeta)  \, d\xi \,d\zeta
=2\dint_{R_2} \Lambda(\xi,\zeta)\, d\xi \,d\zeta.
\end{equation}

Recalling \eqref{f_S_assumptions}, for $(\xi,\zeta) \in R_2$ 
we have the inequality
\begin{equation}\label{cvx_ineq_3}
\Lambda(\xi,\zeta)
\ge C_S C_f (\zeta - \xi)^{p_S + p_f}.
\end{equation}
By combining \eqref{cvx_ineq_2}, \eqref{cvx_ineq_2a} 
and \eqref{cvx_ineq_3}, we obtain
\begin{equation*}
\begin{split}
&(w-v) \left(Q(k,w)-Q(k,v) \right)
-\left(S(k,w)-S(k,v) \right)  \left(f(k,w)-f(k,v) \right)
\\ & \qquad 
\ge C_S C_f \dint_{R_2}  
(\zeta - \xi)^{p_S + p_f} \, d\xi \,d\zeta
= C_{f,S} (w-v)^{p_f + p_{S} + 2}.
\end{split}
\end{equation*}
This completes the proof for the case where $v\le w$. 
We obtain the desired inequality for $w \le v$ 
by swapping the roles of $v$ and $w$.
Finally, combining the two cases yields \eqref{f_S_ineq}.
\end{proof}

%--------------------------------------------------------------------
\section{Estimate of spatial  translates}
\label{sec:spatial_est}

This section focuses on stating and proving 
Proposition~\ref{prop_space_est}, crucial for deriving 
Theorem~\ref{thm:main} as mentioned earlier. To facilitate this, we 
will first lay the groundwork with a series of technical lemmas. 
A condensed version of the proof, particularly addressing 
the viscosity approximation, is presented at the end 
of Section \ref{sec:LxFr} to guide the upcoming calculations.

For now we assume that
$h = 2 \nu$, where $\nu$ is a nonnegative integer.
In what follows $\D_h$ and $\D_{-h}$ denote 
the spatial finite difference operators,
\begin{equation*}
\D_h Z_j^n = Z_{j+2\nu}^n - Z_j^n, 
\quad \D_{-h}  Z_j^n = Z_{j}^n - Z_{j-2\nu}^n.
\end{equation*}
We will also use the difference operators $\D^u_h$ and $\D^h_h$ defined by
\begin{equation*}
\begin{split}
&\D^u_h f_j^n = \D^u_h f(k_j^n,U_j^n) 
= f(k_j^n,U_{j+2\nu}^n)-f(k_j^n,U_j^n), \\
&\D^k_h f_j^n = \D^k_h f(k_j^n,U_j^n) 
= f(k_{j+2\nu}^n,U_{j}^n)-f(k_j^n,U_j^n),
\end{split}
\end{equation*}
with similar definitions for $\D^u_{-h} f_j^n$ and $\D^k_{-h} f_j^n$.
Note the identity
\begin{equation}\label{Dh_identity}
\D_h f_j^n = \D^u_h f_j^n + \D^k_{-h} f_{j+2\nu}^n.
\end{equation}
We will employ the inequalities 
\begin{equation}\label{D_inequalities}
\begin{split}
&\abs{\D_h U_j^n} \le b-a,
\\ &\abs{\D_h G_j^n} \le \norm{\partial_u G} 
 \abs{U_{j+2\nu}^n - U_j^n}
+\norm{\partial_k G}  \abs{k_{j+2\nu}^n - k_j^n}, \quad
\textrm{for $G = f, S, Q$}.
\end{split}
\end{equation}

The immediate goal is to prove Lemma~\ref{lemma_space_trans} 
below. We will use the following key fact, which 
is the content of Lemma~\ref{lemma_entropy_ineq}:
\begin{equation}\label{Gamma_estimate}
\Gamma_j^n \ge C_{f,S} 
\abs{U_{j+2 \nu}^n - U_{j}^n}^{p_f+p_{S}+2},
\end{equation}
where
\begin{equation}\label{def_Gamma}
\begin{split}
\Gamma_j^n &=\left(U_{j+2\nu}^n-U_{j}^n\right) 
\left(Q(k_{j}^n,U_{j+2\nu}^n) - Q(k_{j}^n,U_{j}^n)\right)\\
& \qquad -\left(S(k_j^n, U_{j+2\nu}^n)-S(k_j^n, U_{j}^n)\right)
\left(f(k_{j}^n,U_{j+2\nu}^n) - f(k_{j}^n,U_{j}^n) \right).
\end{split}
\end{equation}

We write the Lax-Friedrichs scheme and entropy inequality in the form
\begin{equation}\label{lxf_sys_1}
\left\{
\begin{split}
  &U_j^{n+1}-\frac12\left(U_{j-1}^n+U_{j+1}^n\right)
  + \frac{\lambda}{2} \left(f_{j+1}^n  - f_{j-1}^n \right) = 0, 
  \quad j \in \Omega'_n,\\
   &S_j^{n+1}-\frac12\left(S_{j-1}^n+S_{j+1}^n\right)
   + \frac{\lambda}{2} \left(Q_{j+1}^n  - Q_{j-1}^n \right) =  \Psi_j^n,  
   \quad j \in \Omega'_n,
\end{split}
\right.
\end{equation}
where, by \eqref{ent_abs} of Lemma~\ref{lem:dissipation_new},
\begin{equation*}%\label{est_Psijn}
\abs{\Psi_j^n} \le K_2 \abs{U_{j+1}^n - U_{j-1}^n}^2
+ K_3 \abs{k_{j+1}^n - k_{j-1}^n}
+K_4 \abs{k_j^{n+1}- \f12 
\left(k_{j-1}^n + k_{j+1}^n \right)}.
\end{equation*}
With the assumption that $k \in BV(\R\times\R_+)$, 
along with \eqref{eq:tvsquared_weight} of Lemma~\ref{lemma_trans_sqd},
\begin{equation}\label{Psi_est}
\Dx  \sumnN  \sumjp \chi_j \abs{\Psi_j^n} \le C_3(T),
\end{equation}
where $C_3(T)$ is a constant that is independent of $\Dx $.

We multiply both equations of \eqref{lxf_sys_1}
by $\chi_j \D_h$, and after some algebra, arrive at the 
following $2\times2$ discrete system:
\begin{equation}\label{lxf_sys_2}
\left\{
\begin{split}
 A_j^{n+1} - \f12 \left(A_{j-1}^n + A_{j+1}^n \right) 
 +{\lambda \over 2} \left(B_{j+1}^n - B_{j-1}^n \right) = C_{A,j}^n, 
 \quad j \in \Omega'_n,\\
 D_j^{n+1} - \f12 \left(D_{j-1}^n + D_{j+1}^n \right) 
 +{\lambda \over 2} \left(E_{j+1}^n - E_{j-1}^n \right) = C_{D,j}^n, 
 \quad j \in \Omega'_n,
\end{split}
\right.
\end{equation}
where
\begin{equation}\label{def_ABCD_1}
\begin{split}
&A_j^n = \chi_j \D_h U_j^n, \quad B_j^n = \chi_j \D_h f_j^n,
\\
&D_j^n = \chi_j \D_h S_j^n, \quad E_j^n = \chi_j \D_h Q_j^n,
\\ 
&C_{A,j}^n =  - \f12 \D_h U_{j+1}^n \left(\chi_{j+1} - \chi_j \right)
+ \f12 \D_h U_{j-1}^n \left(\chi_{j} - \chi_{j-1} \right)
\\ &\qquad \qquad 
+ {\lambda \over 2} \D_h f_{j+1}^n \left(\chi_{j+1} - \chi_j \right)
+ {\lambda \over 2} \D_h f_{j-1}^n \left(\chi_{j} - \chi_{j-1} \right),
\\
&C_{D,j}^n = - \f12 \D_h S_{j+1}^n \left(\chi_{j+1} - \chi_j \right)
+ \f12 \D_h S_{j-1}^n \left(\chi_{j} - \chi_{j-1} \right)\\
&\qquad \qquad + {\lambda \over 2} \D_h Q_{j+1}^n 
\left(\chi_{j+1} - \chi_j \right)
+ {\lambda \over 2} \D_h Q_{j-1}^n 
\left(\chi_{j} - \chi_{j-1} \right)
\\ &\qquad \qquad + \chi_j \D_h \Psi_j^n.                 
\end{split}
\end{equation}

Define
\begin{equation}\label{def_mA_mD}
\begin{split}
&\mA_\ell^n =  \Dx  \sum_{\Omega_n:j\le \ell} A_j^n,
 \quad \ell \in \Omega_n,
 \\ &
 \mD_j^n = \Dx  \sum_{\Omega_n:\ell\ge j} D_\ell^n, 
 \quad j \in \Omega_n,
\end{split}
\end{equation}
where 
\begin{equation*}
\begin{split}
&\text{``$\Omega_n:j\le \ell$'' 
means $\{j \in \Omega_n: j\le \ell\}$}, 
\quad
\text{``$\Omega_n:\ell \ge j$'' 
means $\{\ell \in  \Omega_n: \ell \ge j\}$}.
\end{split}
\end{equation*}
The following identities hold:
\begin{equation}\label{AD_identities}
\begin{split}
& \Dx^2 \dsum_{\Omega_n:\ell \ge j} C_{A,j}^n D_\ell^n 
= \Dx \sumj C_{A,j}^n \mD_j^n, 
\\ & 
\Dx^2 \dsum_{\Omega_n:\ell \ge j} C_{D,\ell}^n A_j^n 
= \Dx \suml C_{D,\ell}^n \mA_\ell^n,
\end{split}
\end{equation}
where we are using the notational convention
\begin{equation*}
\dsum_{\Omega_n:\ell \ge j} P_{j,\ell}
= \dsum_{\{(j,\ell) \in 
\Omega_n \times \Omega_n : \ell \ge j \}} P_{j,\ell},
\end{equation*}
with a similar definition for sums where 
$\Omega_n$ is replaced by $\Omega_n'$.

The interaction identity \eqref{lxf_interaction} below can be viewed as
a discrete, Lax-Friedrichs version of the  interaction 
identity (7) in \cite{Golse:2013aa}, and also
the stochastic interaction identity (3.12) 
of \cite{Karlsen:2023aa} (with $\sigma =0$). 
The term $\mE$ appearing on the right side 
of \eqref{lxf_interaction} is purely an artifact of discretization,
and does not have a counterpart in the interaction 
identities of \cite{Golse:2013aa} and \cite{Karlsen:2023aa}.

\begin{lemma}[Lax-Friedrichs interaction identity, spatial]
\label{lemma_lxf_interaction}
Define
\begin{equation}\label{def_In}
	I^n = \Dx^2 \dsum_{\Omega_n: j \le \ell} A_j^n D_\ell^n.
\end{equation}
Then the following interaction identity holds:
\begin{equation}\label{lxf_interaction}
	\begin{split}
		&\f12 \Dt \Dx \sumnN \sumj  
		\left(A_j^n E_j^n-D_j^{n} B_j^n  \right)
		\\ & \quad 
		=-\Dx \sumnN \sumjp  C_{D,j}^n \mA_j^{n+1}  
		-\Dx \sumnN \sumjp  C_{A,j}^n \f12 
		\left(\mD_{j-1}^n+\mD_{j+1}^n \right)
		\\ & \quad \qquad +I^{N+1}- I^0 + \mE, 
	\end{split}
\end{equation}
where
\begin{equation*}
\begin{split}
\mE&=
{\lambda  \over 4} \Dt \Dx \sumnN \sumjp 
E_{j-1}^n \left(B_{j+1}^n -B_{j-1}^n \right)
\\ & \qquad - {1\over 4} \Dt  \Dx  \sumnN \sumjp 
B_{j+1}^n \left(D_{j+1}^n - D_{j-1}^n\right)  
\\ & \qquad 
- {1 \over 4\lambda} \Dt  \Dx  \sumnN \sumjp 
\left( D_{j-1}^n + \lambda  E_{j-1}^n \right)
\left(A_{j+1}^n - A_{j-1}^n \right)
\\ & \qquad 
- \f12 \Dt  \Dx  \sumnN \sumjp E_{j-1}^n C_{A,j}^n
\\ &=: \mR_1+\mR_2+\mR_3+\mR_4.
\end{split}
\end{equation*}

\end{lemma}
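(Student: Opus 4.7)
The plan is to derive the identity by computing the one-step increment $I^{n+1}-I^n$ in terms of quantities at time level $n$, and then telescoping in $n$ from $0$ to $N$.

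First, I would substitute the update rule
\[
A_j^{n+1} = \frac{1}{2}(A_{j-1}^n+A_{j+1}^n) - \frac{\lambda}{2}(B_{j+1}^n-B_{j-1}^n) + C_{A,j}^n,
\]
and the analogous formula for $D_\ell^{n+1}$, into the double sum defining $I^{n+1}$. Expanding the product of these two expressions yields nine groups of terms. After re-indexing, which is legitimate because $\Omega_{n+1}=\Omega_n'$ and a shift by $\pm 1$ maps $\Omega_n'$ into $\Omega_n$, every one of these nine sums runs over pairs $(j,\ell)\in\Omega_n^2$ with $j\le\ell$, and the piece coming from the product of the two spatial averages reproduces $I^n$ together with diagonal ($j=\ell$) boundary corrections that will eventually feed into $\mE$.

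The heart of the argument is the treatment of the two cross terms in which one factor is a spatial average and the other is a flux difference. A discrete Abel summation on the spatial variable telescopes the flux difference $B_{j+1}^n-B_{j-1}^n$ (respectively $E_{\ell+1}^n-E_{\ell-1}^n$) against the partial sum $\mD$ (respectively $\mA$) of the other factor; the boundary contribution on the diagonal $j=\ell$ leaves exactly $\frac{1}{2}\Dx(A_j^n E_j^n - D_j^n B_j^n)$. After multiplication by $\Dt$ and summation over $n$, this produces the left-hand side of \eqref{lxf_interaction}. The source terms $C_{A,j}^n$ and $C_{D,\ell}^n$ paired against the remaining factors are re-summed using the identities \eqref{AD_identities}: the $C_D$ sum collapses cleanly to $\Dx\sum_j C_{D,j}^n \mA_j^{n+1}$, while the $C_A$ sum, because of the staggered spatial averaging $\frac{1}{2}(D_{\ell-1}^n+D_{\ell+1}^n)$ in the scheme, collapses to $\Dx\sum_j C_{A,j}^n \cdot \frac{1}{2}(\mD_{j-1}^n+\mD_{j+1}^n)$.

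All remaining quadratic-in-discretization residues, namely the product of two flux differences, the diagonal boundary corrections left over from the averaging step, and the mixed product of $C_A$ with a flux difference, are collected and reorganized into the four pieces $\mR_1,\mR_2,\mR_3,\mR_4$ that define $\mE$. Telescoping in $n$ supplies the $I^{N+1}-I^0$ contribution and completes the identity. The chief obstacle is bookkeeping on the staggered mesh: since $A_j^n$ lives on $\Omega_n$ and any shift by one jumps to $\Omega_n'$, every Abel summation must respect parity, and the signs across all nine product terms must be tracked carefully. The asymmetry between $\mA$ (partial sums from the left) and $\mD$ (partial sums from the right) is precisely what makes $C_D$ pair cleanly with $\mA_j^{n+1}$ while $C_A$ pairs with an additional spatial average of $\mD^n$; getting this asymmetric structure to emerge with the correct signs and coefficients is the only genuinely delicate part of the calculation.
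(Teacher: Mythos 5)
Your proposal is correct and follows essentially the same route as the paper: both compute the one-step increment $I^{n+1}-I^n$ by substituting the difference equations for $A$ and $D$, perform discrete Abel summations whose diagonal boundary terms produce $\f12 \Dt\Dx\sum_n\sum_j\bigl(A_j^nE_j^n-D_j^nB_j^n\bigr)$, pair $C_{D,\ell}^n$ with $\mA_j^{n+1}$ and $C_{A,j}^n$ with the staggered average $\f12\bigl(\mD_{j-1}^n+\mD_{j+1}^n\bigr)$, and telescope in $n$ to obtain $I^{N+1}-I^0$ with the residuals collected into $\mE$. The only organizational difference is that the paper avoids your full nine-term expansion by using the three-term discrete Leibniz splitting $A_j^{n+1}D_\ell^{n+1}-\f12\bigl(A_{j+1}^nD_{\ell+1}^n+A_{j-1}^nD_{\ell-1}^n\bigr)=A_j^{n+1}\bigl(D_\ell^{n+1}-\f12(D_{\ell+1}^n+D_{\ell-1}^n)\bigr)-\f12 D_{\ell+1}^n\bigl(A_{j+1}^n-A_j^{n+1}\bigr)-\f12 D_{\ell-1}^n\bigl(A_{j-1}^n-A_j^{n+1}\bigr)$, which keeps $A_j^{n+1}$ intact and makes the appearance of $\mA_j^{n+1}$ (at time level $n+1$) immediate rather than requiring you to recombine three of your nine terms.
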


\begin{proof}
Using the identity 
\begin{equation*}
I^n = \f12 \Dx^2 \dsum_{\Omega_n':j \le \ell} 
\left(A^n_{j+1} D^n_{\ell+1} + A^n_{j-1} D^n_{\ell-1} \right),
\end{equation*}
in combination with \eqref{def_In}, yields
\begin{equation}\label{In_diff_1}
I^{n+1}-I^n
= \Dx^2 \dsum_{\Omega'_n: j \le \ell}\left(A_j^{n+1} D_\ell^{n+1} -\f12 
\left(A^n_{j+1} D^n_{\ell+1} + A^n_{j-1} D^n_{\ell-1} \right) \right).
\end{equation}
Next,
\begin{equation}\label{int_ident_1}
\begin{split}
&A_j^{n+1} D_\ell^{n+1} -\f12 
\left(A^n_{j+1} D^n_{\ell+1} + A^n_{j-1} D^n_{\ell-1} \right)\\
&\quad =  A_j^{n+1} \left(D_{\ell}^{n+1} 
- \f12 (D_{\ell+1}^n + D_{\ell-1}^n) \right)\\
&\quad \qquad
-\f12 D_{\ell+1}^n \left(A_{j+1}^n - A_j^{n+1} \right)
- \f12 D_{\ell-1}^n \left(A_{j-1}^n - A_j^{n+1} \right)\\
&\quad =  A_j^{n+1} \left(C_{D,\ell}^n 
- {\lambda \over 2} (E_{\ell+1}^n - E_{\ell-1}^n) \right)\\
&\quad \qquad 
-\f12 D_{\ell+1}^n \left(A_{j+1}^n - A_j^{n+1} \right)
- \f12 D_{\ell-1}^n \left(A_{j-1}^n - A_j^{n+1} \right).
\end{split}
\end{equation}
Substituting the identity
\begin{equation*}
\begin{split}
&-\f12 D_{\ell+1}^n \left(A_{j+1}^n - A_j^{n+1} \right)
-\f12 D_{\ell-1}^n \left(A_{j-1}^n - A_j^{n+1} \right)\\
&\qquad =\f12 \left(D_{\ell-1}^n + D_{\ell+1}^n\right)
\left(C_{A,j}^{n} - {\lambda \over 2}(B_{j+1}^n  - B_{j-1}^n) \right)
\\ & \qquad \qquad 
-{1\over 4} \left(A_{j+1}^n - A_{j-1}^n\right) 
\left(D_{\ell+1}^n - D_{\ell-1}^n \right)
 \end{split}
\end{equation*}
into \eqref{int_ident_1} yields
\begin{equation}\label{int_ident_2}
\begin{split}
&A_j^{n+1} D_\ell^{n+1} -\f12 
\left(A^n_{j+1} D^n_{\ell+1} - A^n_{j-1} D^n_{\ell-1} \right)\\
&\quad = A_j^{n+1} \left(C_{D,\ell}^n 
- {\lambda \over 2} \left(E_{\ell+1}^n - E_{\ell-1}^n\right) \right)
\\ & \quad \qquad
+ \f12 \left(D_{\ell-1}^n + D_{\ell+1}^n\right)\left(C_{A,j}^{n} 
- {\lambda \over 2}(B_{j+1}^n  - B_{j-1}^n) \right)
\\ & \quad \qquad
- {1\over 4} \left(A_{j+1}^n - A_{j-1}^n\right) 
\left(D_{\ell+1}^n - D_{\ell-1}^n \right).
\end{split}
\end{equation}
After substituting \eqref{int_ident_2} into \eqref{In_diff_1} the result is
\begin{equation}\label{int_ident_3}
\begin{split}
I^{n+1}-I^n
& = \Dx^2 \dsum_{\Omega'_n: j \le \ell} A_j^{n+1} 
\left(C_{D,\ell}^n - {\lambda \over 2} (E_{\ell+1}^n - E_{\ell-1}^n) \right)
\\ &\qquad
+ \Dx^2 \dsum_{\Omega'_n: j \le \ell} 
\f12\left(D_{\ell-1}^n+D_{\ell+1}^n \right)
\left(C_{A,j}^{n} - {\lambda \over 2}(B_{j+1}^n  - B_{j-1}^n) \right)
\\ & \qquad 
- {1\over 4} \Dx^2 \dsum_{\Omega'_n: j \le \ell} 
\left(A_{j+1}^n - A_{j-1}^n \right) 
\left(D_{\ell+1}^n - D_{\ell-1}^n \right).
\end{split}
\end{equation}
Due to the identities
\begin{equation*}
\begin{split}
&\dsum_{\Omega'_n: j \le \ell} A_j^{n+1} 
\left(E_{\ell+1}^n - E_{\ell-1}^n\right) 
=- \sumjp A_j^{n+1} E_{j-1}^n,
\\ & 
\dsum_{\Omega'_n: j \le \ell} \f12 
\left(D_{\ell-1}^n+D_{\ell+1}^n  \right) 
\left(B_{j+1}^n - B_{j-1}^n \right) 
= \sumjp \f12 \left(D_{j-1}^n + D_{j+1}^n \right) B_{j+1}^n,
\\ &
\dsum_{\Omega'_n: j \le \ell} \left(A_{j+1}^n - A_{j-1}^n \right) 
\left(D_{\ell+1}^n-D_{\ell-1}^n \right) 
=-\sumjp \left(A_{j+1}^n - A_{j-1}^n \right) D_{j-1}^n,
\end{split}
\end{equation*}
and the identities \eqref{AD_identities}, we can 
express \eqref{int_ident_3} in the form
\begin{equation}\label{int_ident_4}
\begin{split}
I^{n+1}-I^n
& = \Dx  \sumjp \mA_j^{n+1} C_{D,j}^n 
+ {\lambda \over 2} \Dx^2 \sumjp A_j^{n+1} E_{j-1}^n 
\\ & \qquad
+\Dx  \sumjp \f12 \left(\mD_{j-1}^n+\mD_{j+1}^n \right) C_{A,j}^{n} 
\\ & \qquad
-{\lambda \over 2} \Dx^2 \sumjp \f12 
\left(D_{j-1}^n + D_{j+1}^n \right)B_{j+1}^n
\\ & \qquad 
+{1\over 4} \Dx^2 \sumjp 
\left(A_{j+1}^n - A_{j-1}^n\right)D_{j-1}^n.
\end{split}
\end{equation}
Next we use the identities
\begin{equation}\label{AE_sum}
\begin{split}
\sumjp A_j^{n+1} E_{j-1}^n 
&=  \sumjp A_{j-1}^n E_{j-1}^n + \sumjp E_{j-1}^n 
\left(A_j^{n+1} - A_{j-1}^n \right)\\
&= \sumj A_j^n E_j^n + \sumjp E_{j-1}^n 
\left(A_j^{n+1} - A_{j-1}^n \right),
\end{split}
\end{equation}
\begin{equation}\label{DB_sum}
\begin{split}
\sumjp \f12 \left(D_{j-1}^n+D_{j+1}^n \right) B_{j+1}^n
&= \sumj D_j^n B_j^n - \f12 \sumjp B_{j+1}^n 
\left(D_{j+1}^n - D_{j-1}^n \right),
\end{split}
\end{equation}
and 
\begin{equation}\label{AE_sum_1}
\begin{split}
\sumjp E_{j-1}^n \left(A_j^{n+1} - A_{j-1}^n \right)
&= \f12 \sumjp E_{j-1}^n \left(A_{j+1}^n-A_{j-1}^n \right)
\\ & \quad
-{\lambda \over 2} \sumjp E_{j-1}^n 
\left(B_{j+1}^n-B_{j-1}^n \right)
+\sumjp E_{j-1}^n C_{A,j}^n.
\end{split}
\end{equation}

%----------------------------
Substituting \eqref{AE_sum}, \eqref{DB_sum}, 
and \eqref{AE_sum_1} into \eqref{int_ident_4}, we obtain
\begin{equation*}%\label{In_diff_2}
\begin{split}
I^{n+1} - I^n 
& = {\lambda \over 2}  \Dx^2 \sumj  
\left(A_j^n E_j^n   - D_j^{n} B_j^n  \right)
\\ & \qquad 
+  {\lambda \over 4}  \Dx^2 \sumjp B_{j+1}^n 
\left(D_{j+1}^n - D_{j-1}^n \right)
\\ & \qquad 
- {\lambda^2  \over 4} \Dx^2 \sumjp E_{j-1}^n 
\left(B_{j+1}^n - B_{j-1}^n \right)
\\ & \qquad
+ \f12 \Dx^2 \sumjp \left(\f12 D_{j-1}^{n}
+{\lambda \over 2}E_{j-1}^{n} \right)
\left( A_{j+1}^n -A_{j-1}^n\right)
\\ & \qquad 
+ \Dx  \sumjp  C_{D,j}^n \mA_j^{n+1}
+ \Dx  \sumjp  C_{A,j}^n \f12 \left(\mD_{j-1}^n + \mD_{j+1}^n \right)
\\ & \qquad 
+ {\lambda \over 2} \Dx^2 \sumjp E_{j-1}^n C_{A,j}^n.
\end{split}
\end{equation*}
The proof is completed by summing over $n$, recalling $\lambda = \Dt  / \Dx $, 
and then solving for the sum containing $A_j^n E_j^n   - D_j^{n} B_j^n$.
\end{proof}

\begin{lemma}\label{lemma_mA,mD}
Assuming that $h = 2 \nu \Dx $, with $\nu$ a nonnegative 
integer, the following estimates hold:
\begin{equation}\label{est_mA_mD}
	\abs{\mA_{\ell}^n}, \abs{\mD_j^n} \lesssim h.
\end{equation}
\end{lemma}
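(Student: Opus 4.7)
The plan is to prove both bounds by a telescoping index-shift argument that exploits the fact that translation by $2\nu$ preserves the sublattice $\Omega_n$ (since $2\nu$ is even). I will sketch the argument for $\mathcal{A}_\ell^n$ in detail; the one for $\mathcal{D}_j^n$ is completely symmetric.

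Starting from the definition of $\mathcal{A}_\ell^n$ and expanding $A_j^n = \chi_j(U_{j+2\nu}^n - U_j^n)$, I split into two sums and reindex the ``advanced'' one by $j \mapsto j + 2\nu$. After combining, this yields
\begin{equation*}
\mathcal{A}_\ell^n = \Delta x \sum_{\Omega_n:\, \ell < j \le \ell + 2\nu} \chi_{j-2\nu} U_j^n + \Delta x \sum_{\Omega_n:\, j \le \ell} (\chi_{j-2\nu} - \chi_j) U_j^n.
\end{equation*}
The first (boundary) sum contains exactly $\nu$ terms, since consecutive elements of $\Omega_n$ differ by $2$; combined with $|U_j^n| \le \max(|a|,|b|) \lesssim 1$ from Lemma~\ref{th:monotonicity} and the boundedness of $\chi$, this piece is $O(\nu \Delta x) = O(h)$. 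For the second (bulk) sum, the Lipschitz-type property $|\chi(x+z) - \chi(x)| \lesssim \chi(x)|z|$ from \eqref{weight_properties} gives $|\chi_{j-2\nu} - \chi_j| \lesssim h\,\chi_j$, so this piece is bounded by
\begin{equation*}
h\, \|U\|_\infty \cdot \Delta x \sum_{j \in \Omega_n} \chi_j \lesssim h\, \|\chi\|_{L^1}.
\end{equation*}
Combining the two pieces gives $|\mathcal{A}_\ell^n| \lesssim h$ uniformly in $\ell$ and $n$.

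For $\mathcal{D}_j^n = \Delta x \sum_{\Omega_n:\, \ell \ge j} \chi_\ell (S_{\ell+2\nu}^n - S_\ell^n)$, the same shift produces a boundary block of $\nu$ terms on $j \le \ell < j + 2\nu$ together with a bulk sum involving $\chi_{\ell-2\nu} - \chi_\ell$. Since $S \in C^1([\alpha,\beta] \times [a,b])$ is continuous on a compact set, $|S_\ell^n| \lesssim 1$, and both pieces are $O(h)$ by the same reasoning.

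No serious obstacle arises; the argument is essentially bookkeeping. The two structural ingredients are: (i) $2\nu$ is even, so the shift preserves $\Omega_n$ and the reindexed sum still lives on the same sublattice, so only ``boundary'' overlap of $\nu$ cells contributes; and (ii) the weight $\chi$ satisfies the Lipschitz-type bound in \eqref{weight_properties}, which is precisely what is needed to convert the bulk sum into an $O(h)$ quantity via a weighted Riemann-sum estimate controlled by $\|\chi\|_{L^1}$.
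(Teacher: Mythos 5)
Your proof is correct and follows essentially the same route as the paper: summation by parts (telescoping after an index shift by $2\nu$, which preserves $\Omega_n$), leaving a boundary block of $\nu$ terms bounded by $\nu\Dx\sup\chi\lesssim h$ and a bulk sum controlled by $\abs{\chi_{j-2\nu}-\chi_j}\lesssim h\,\chi_j$ together with $\Dx\sum_{j\in\Omega_n}\chi_j\lesssim\norm{\chi}_{L^1}$. The only difference is cosmetic (you reindex the advanced sum directly, while the paper telescopes $\chi_{j+2\nu}U_{j+2\nu}^n-\chi_jU_j^n$ and corrects with $U_{j+2\nu}^n(\chi_{j+2\nu}-\chi_j)$), and the paper likewise treats $\mD_j^n$ by symmetry.
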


\begin{proof}
We prove that $\abs{\mA_{\ell}^n} \lesssim h$.
The proof that $\abs{\mD_j^n} \lesssim h$ is similar.
Recalling \eqref{def_mA_mD},
\begin{equation*}
\begin{split}
\mA_\ell^n 
&= \Dx  \sum_{\Omega_n:j\le \ell} \chi_j \D_h U_j^n
= \Dx  \sum_{\Omega_n:j\le \ell} \chi_j 
\left( U_{j+2 \nu}^n - U_j^n\right)
\\ & =  \Dx  \sum_{\Omega_n:j\le \ell}  
\left( \chi_{j+2\nu} U_{j+2 \nu}^n - \chi_j U_j^n\right)
-\Dx  \sum_{\Omega_n:j\le \ell} U_{j + 2 \nu} \left(\chi_{j+2\nu} - \chi_j \right)
\\ & = \Dx  \sum_{i = 0}^{\nu -1}  \chi_{\ell+2 i} U_{\ell + 2i}^n
-  \Dx  \sum_{\Omega_n:j\le \ell} U_{j + 2 \nu} 
\left(\chi_{j+2\nu} - \chi_j \right).
\end{split}
\end{equation*}
This yields via \eqref{weight_properties}
\begin{equation*}
\begin{split}
\abs{\mA_\ell^n } 
&\lesssim \Dx  \sum_{i = 0}^{\nu -1}  
\chi_{\ell+2 i} + \Dx  \sum_{j \in \Omega_n} 2\nu \Dx  \chi_j
\lesssim h.
\end{split}
\end{equation*}
\end{proof}

\begin{lemma}\label{lemma_DhU_est}
Assume that $h = 2 \nu \Dx $, with $\nu$ a nonnegative 
integer. Then
\begin{equation}\label{Dh_u_Est}
	\Dt  \Dx  \sumnN \sumj \chi_j \abs{\D_h 
	\left(U_{j+2}^n - U_j^n\right) }\lesssim \sqrt{\Dx},
\end{equation}
and 
\begin{equation}\label{Dh_G_Est}
	\Dt  \Dx  \sumnN \sumj \chi_j \abs{\D_h 
	\left(G_{j+2}^n - G_j^n\right) } \lesssim \sqrt{\Dx},
	\quad \text{for $G=f, S, Q$}.
\end{equation}
\end{lemma}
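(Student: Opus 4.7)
The plan is to reduce both estimates to the weak $H^1$ (dissipation) bound in Lemma~\ref{lemma_trans_sqd} via a weighted Cauchy--Schwarz argument, exploiting the CFL-enforced scaling $\Dt\sim\Dx$. A convenient reformulation of the first estimate in \eqref{eq:tvsquared_weight}, obtained by relabeling $i=j-1$ (so that $i\in\Omega_n$), reads
\begin{equation*}
\Dx\sumnN\sum_{i\in\Omega_n}\chi_i\left(U_{i+2}^n-U_i^n\right)^2\le C_1(T),
\end{equation*}
which is exactly the discrete space-time weighted $L^2$-control of the second-neighbor difference that appears in the statement.

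For \eqref{Dh_u_Est}, I split with the triangle inequality
\begin{equation*}
\abs{\D_h(U_{j+2}^n-U_j^n)}\le \abs{U_{j+2+2\nu}^n-U_{j+2\nu}^n}+\abs{U_{j+2}^n-U_j^n},
\end{equation*}
and then for each term apply weighted Cauchy--Schwarz in the $(n,j)$ indices:
\begin{equation*}
\Dt\Dx\sumnN\sumj\chi_j\abs{U_{j+2}^n-U_j^n}
\le\Bigl(\Dt\Dx\sumnN\sumj\chi_j\Bigr)^{1/2}\Bigl(\Dt\Dx\sumnN\sumj\chi_j(U_{j+2}^n-U_j^n)^2\Bigr)^{1/2}.
\end{equation*}
The first factor is $\lesssim (T\,\|\chi\|_{L^1})^{1/2}$ since $\chi\in L^1(\R)$; the second factor is $\lesssim(C_1(T)\,\Dt)^{1/2}\lesssim\sqrt{\Dx}$ by the reformulated dissipation bound and $\Dt\sim\Dx$. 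The shifted term is treated by the change of index $j\mapsto j-2\nu$, which replaces $\chi_j$ by $\chi_{j-2\nu}$; the last property in \eqref{weight_properties}, applied with $R$ bounded by the admissible translation length $h=2\nu\Dx$, gives $\chi_{j-2\nu}\lesssim\chi_j$, after which the same Cauchy--Schwarz argument applies.

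For \eqref{Dh_G_Est}, the additional ingredient is the Lipschitz bound \eqref{D_inequalities} for $G=f,S,Q$, which lets me split
\begin{equation*}
\abs{G_{j+2+2\nu}^n-G_{j+2\nu}^n}\le\norm{\pu G}\abs{U_{j+2+2\nu}^n-U_{j+2\nu}^n}+\norm{\pk G}\abs{k_{j+2+2\nu}^n-k_{j+2\nu}^n},
\end{equation*}
and analogously for the unshifted pair. The $U$-contribution yields $O(\sqrt{\Dx})$ exactly as above. For the $k$-contribution I use that $k\in BV(\R\times\R_+)$ and $\chi\in L^\infty(\R)$, so $\Dx\sumj\abs{k_{j+2}^n-k_j^n}\lesssim\Dx\cdot\mathrm{TV}_x(k(\cdot,t^n))$, and summing in $n$ against $\Dt$ gives
\begin{equation*}
\Dt\Dx\sumnN\sumj\chi_j\abs{k_{j+2}^n-k_j^n}\lesssim\Dx\cdot\int_0^T\mathrm{TV}_x(k(\cdot,t))\,dt\lesssim\Dx,
\end{equation*}
which is even smaller than $\sqrt{\Dx}$, and the shifted $k$-term is handled by the same weight-comparability argument.

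The analytic content is therefore just interpolation between the uniform $L^\infty$ bound (Lemma~\ref{th:monotonicity}) and the weak-$L^2$ dissipation estimate via Cauchy--Schwarz, with the $BV$-regularity of $k$ absorbing the coefficient-difference terms. I expect the main obstacle to be purely bookkeeping: keeping careful track of the sublattice distinction between $\Omega_n$ and $\Omega_n'$, of the index shifts produced by $\D_h$ when transferring the dissipation estimate to the precise combinations appearing in \eqref{Dh_u_Est}--\eqref{Dh_G_Est}, and of the (harmless) constants lost when using the weight-comparability property to absorb shifts of $\chi$.
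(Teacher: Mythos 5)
Your proof is correct and follows essentially the same route as the paper's: triangle inequality on $\D_h$, a shift of the weight from $\chi_j$ to $\chi_{j\pm 2\nu}$ using \eqref{weight_properties}, weighted Cauchy--Schwarz against the dissipation bound \eqref{eq:tvsquared_weight} together with $\Dt\sim\Dx$, and for $G=f,S,Q$ the splitting \eqref{D_inequalities} with the $k$-part absorbed by $k\in BV(\R\times\R_+)$. The only (immaterial) difference is that you compare shifted weights via the ratio property $\sup_{\abs{x-y}\le R}\chi(x)/\chi(y)\lesssim_R 1$, whereas the paper uses $\abs{\chi(x+z)-\chi(x)}\lesssim\chi(x)\abs{z}$ to produce a factor $(1+h)$; both yield the same $h$-dependent constant that the paper also tolerates.
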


\begin{proof}
For the proof of \eqref{Dh_u_Est},
\begin{equation}\label{Dh_u_Est_1}
\begin{split}
\chi_j \abs{\D_h \left(U_{j+2}^n - U_j^n\right) }
&= \chi_j \abs{\left(U_{j+2\nu+2}^n-U_{j+2\nu}^n\right) 
- \left(U_{j+2}^n- U_j^n\right)}
\\ & \le  \chi_j \abs{U_{j+2\nu+2}^n-U_{j+2\nu}^n} 
+ \chi_j \abs{U_{j+2}^n- U_j^n}
\\ & =  \chi_{j+2\nu} \abs{U_{j+2\nu+2}^n-U_{j+2\nu}^n} 
+ \chi_j \abs{U_{j+2}^n- U_j^n}
\\ & \quad + \left(\chi_j -   \chi_{j+2\nu}\right)  
\abs{U_{j+2\nu+2}^n-U_{j+2\nu}^n}
\\ & \lesssim 
\chi_{j+2\nu} \abs{U_{j+2\nu+2}^n-U_{j+2\nu}^n} 
+ \chi_j \abs{U_{j+2}^n- U_j^n}
\\ & \quad 
+ h \chi_{j+2\nu} \abs{U_{j+2\nu+2}^n-U_{j+2\nu}^n}.
\end{split}
\end{equation}
Thus
\begin{equation*}
\begin{split}
&\Dt  \Dx  \sumnN \sumj \chi_j 
\abs{\D_h \left(U_{j+2}^n - U_j^n\right) }
\lesssim (2 + h) \Dt  \Dx  \sumnN \sumj 
\chi_j \abs{U_{j+2}^n - U_j^n}
\\ & \qquad
\le (2+h) \sqrt{\Dt } \left( \Dt \Dx \sumnN \sumj 
\chi_j \right)^{1/2} 
\\ & \qquad \qquad \times
\left( \Dx  \sumnN \sumj \chi_j 
\abs{U_{j+2}^n - U_j^n}^2 \right)^{1/2}
\lesssim \sqrt{\Dx }.
\end{split}
\end{equation*}
Here we have used the Cauchy-Schwarz 
inequality, \eqref{eq:tvsquared_weight}
of Lemma~\ref{lemma_trans_sqd},
and $\Dt  = \lambda \Dx $. 

For the proof of \eqref{Dh_G_Est}, we repeat the calculation 
in \eqref{Dh_u_Est_1},with $G_j^n$ replacing $U_j^n$, resulting in
\begin{equation*}
\chi_j \abs{\D_h \left(G_{j+2}^n - G_j^n\right) }
\le (1+h)\chi_{j+2\nu} \abs{G_{j+2\nu+2}^n-G_{j+2\nu}^n}
+\chi_j \abs{G_{j+2}^n- G_j^n}.
\end{equation*}
Recalling \eqref{D_inequalities} we find that
\begin{equation*}
\begin{split}
\chi_j \abs{\D_h \left(G_{j+2}^n - G_j^n\right) }
&\lesssim
(1+h) \chi_{j+2\nu} \left( \abs{U_{j+2\nu+2}^n-U_{j+2\nu}^n}
+\abs{k_{j+2\nu+2}^n-k_{j+2\nu}^n}  \right)\\
&\qquad
+\chi_j  \left(  \abs{U_{j+2}^n- U_j^n} +  \abs{k_{j+2}^n- k_j^n}\right).
\end{split}
\end{equation*}
Summing over $n$ and $j$, invoking \eqref{Dh_u_Est} and 
the fact that $k \in BV(\R\times\R_+)$ then 
yields \eqref{Dh_G_Est}
\end{proof}

\begin{lemma}\label{lemma_space_trans}
Let $h = 2 \nu \Dx $, where $\nu$ is a nonnegative integer.
We have the following estimates for the spatial translates:
\begin{align}
	\label{spatial_translates}
	& \Dt \Dx  \sumnN \sumjp  \chi_j^2 
	\abs{U_{j-1 + 2\nu}^n - U_{j-1}^n}^{p_f+p_{S}+2}
	\lesssim h,
	\\ &
	\label{spatial_translates_1}
	\Dt  \Dx  \sumnN \sumjp  \chi_j 
	\abs{U_{j-1 + 2 \nu}^n - U_{j-1}^n}
	\lesssim h^{\mu}, \quad \mu:= 1/\left(p_f+p_{S}+2\right).
\end{align}
Under the assumptions of Remark~\ref{remark_eta_eq_f}, we obtain
\begin{equation}\label{spatial_translates_2}
	\Dt  \Dx \sumnN \sumjp  
	\chi_j \abs{U_{j-1 + 2 \nu}^n 
	- U_{j-1}^n}\lesssim h^{1/4}.
\end{equation}
\end{lemma}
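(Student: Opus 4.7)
My plan is to apply the Lax--Friedrichs interaction identity \eqref{lxf_interaction} from Lemma~\ref{lemma_lxf_interaction}, bound its right-hand side by $\lesssim h$, and extract from its left-hand side a lower bound in terms of $\chi_j^2|U_{j+2\nu}^n-U_j^n|^{p_f+p_S+2}$. Substituting the definitions \eqref{def_ABCD_1}, the left-hand side $\tfrac{1}{2}\Dt\Dx\sum_n\sum_j(A_j^nE_j^n-D_j^nB_j^n)$ becomes $\tfrac{1}{2}\Dt\Dx\sum_n\sum_j\chi_j^2\bigl[\Delta_h U_j^n\cdot\Delta_h Q_j^n-\Delta_h S_j^n\cdot\Delta_h f_j^n\bigr]$. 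For $G\in\{S,Q,f\}$, I split $\Delta_h G_j^n=[G(k_j^n,U_{j+2\nu}^n)-G(k_j^n,U_j^n)]+r_{G,j}^n$ with $|r_{G,j}^n|\le\norm{\partial_k G}\,|k_{j+2\nu}^n-k_j^n|$. The principal parts reassemble into $\chi_j^2\Gamma_j^n$ (with $\Gamma_j^n$ as in \eqref{def_Gamma}), so Lemma~\ref{lemma_entropy_ineq} through \eqref{Gamma_estimate} delivers the lower bound $C_{f,S}\chi_j^2|U_{j+2\nu}^n-U_j^n|^{p_f+p_S+2}$. The mixed remainders, each bounded by $\chi_j^2\cdot O(1)\cdot|k_{j+2\nu}^n-k_j^n|$, are handled by telescoping $|k_{j+2\nu}^n-k_j^n|\le\sum_{i=0}^{\nu-1}|k_{j+2(i+1)}^n-k_{j+2i}^n|$ together with $k\in BV(\Do)$ and boundedness of $\chi$, giving a contribution $\lesssim h$.

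Next I would bound every term on the right-hand side of \eqref{lxf_interaction} by $\lesssim h$. The two principal sums $\Dx\sum_n\sum_j C_{D,j}^n\mA_j^{n+1}$ and $\Dx\sum_n\sum_j C_{A,j}^n\tfrac{1}{2}(\mD_{j-1}^n+\mD_{j+1}^n)$ combine $|\mA|,|\mD|\lesssim h$ from Lemma~\ref{lemma_mA,mD} with the uniform $\ell^1$-type bounds $\Dx\sum_n\sum_j|C_{A,j}^n|,\Dx\sum_n\sum_j|C_{D,j}^n|\lesssim 1$; these rely on $|\chi_{j\pm1}-\chi_j|\lesssim\Dx\,\chi_j$ from \eqref{weight_properties}, the $L^\infty$ bound of Lemma~\ref{th:monotonicity}, and the weighted entropy production estimate \eqref{Psi_est} (itself coming from \eqref{ent_abs} and Lemma~\ref{lemma_trans_sqd}). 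The boundary term is controlled by rewriting $I^n=\Dx\sum_\ell D_\ell^n\mA_\ell^n$ and using $|\mA_\ell^n|\lesssim h$ together with $\Dx\sum_\ell|D_\ell^n|\lesssim 1$, giving $|I^{N+1}-I^0|\lesssim h$. The discretization error $\mE=\mR_1+\cdots+\mR_4$ couples first-order differences of $A,B,D,E$ with second-order spatial differences of the same objects; using the key bounds \eqref{Dh_u_Est}--\eqref{Dh_G_Est} of Lemma~\ref{lemma_DhU_est} inside a Cauchy--Schwarz argument against the weak-$H^1$ dissipation of Lemma~\ref{lemma_trans_sqd} yields $|\mE|\lesssim h$.

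Assembling the lower and upper bounds produces $\Dt\Dx\sum_n\sum_j\chi_j^2|U_{j+2\nu}^n-U_j^n|^{p_f+p_S+2}\lesssim h$, and a shift $j\to j-1$ (legitimate since $\chi_j/\chi_{j-1}$ is uniformly bounded by \eqref{weight_properties}) then gives \eqref{spatial_translates}. For \eqref{spatial_translates_1}, set $p:=p_f+p_S+2$ and write $\chi_j|\Delta_h U|=(\chi_j^2|\Delta_h U|^p)^{1/p}\cdot\chi_j^{(p-2)/p}$, then apply H\"older's inequality with conjugate exponents $p$ and $p/(p-1)$: the $p$-th root of \eqref{spatial_translates} contributes $h^{1/p}=h^{\mu}$, while $\Dt\Dx\sum_n\sum_j\chi_j^{(p-2)/(p-1)}\lesssim 1$ since $\chi\in L^1(\R)\cap L^\infty(\R)$, with the constant absorbing any dependence on a fractional-power integral of $\chi$. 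Estimate \eqref{spatial_translates_2} is then the specialization to $f=S$ uniformly convex (Remark~\ref{remark_eta_eq_f}), giving $p_f=p_S=1$, $p=4$, $\mu=1/4$.

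The main technical obstacle I anticipate is the careful control of the discretization remainder $\mE$: each $\mR_i$ couples a pointwise-bounded first-order difference (such as $E_{j-1}^n$) with a spatial second difference (such as $B_{j+1}^n-B_{j-1}^n$), and upgrading the naive $O(\sqrt{\Dx})$ estimate provided by Lemma~\ref{lemma_DhU_est} to the required $O(h)$ requires extracting additional smallness from the first-order factor via the weak-$H^1$ bound of Lemma~\ref{lemma_trans_sqd}, suitably combined through Cauchy--Schwarz. A second delicate point is the telescoping estimate of the $k$-increment remainders in the reduction $A E-D B\mapsto\chi^2\Gamma$, which depends critically on $k\in BV(\Do)$ and on the weight property $|\chi_{j+2\nu}-\chi_j|\lesssim h\,\chi_j$.
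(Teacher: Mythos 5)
Your proposal follows the paper's proof essentially step for step: the interaction identity of Lemma~\ref{lemma_lxf_interaction}, the splitting of $\D_h G$ into a $u$-increment plus a $k$-increment so that the principal part reassembles into $\chi_j^2\Gamma_j^n$ (this is exactly \eqref{Dh_identity} and \eqref{dot_1}), the lower bound \eqref{Gamma_estimate}, the bounds on the source terms and on $I^0,I^{N+1}$ via Lemma~\ref{lemma_mA,mD}, \eqref{Psi_est} and the $BV$ telescoping of $k$, and finally H\"older. The architecture is the same and the outline is correct.

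Two points where you diverge are worth recording. First, the remainder $\mE$: the paper estimates each $\mR_i$ by $\sqrt{\Dx}$ (via Lemma~\ref{lemma_DhU_est}) and then asserts this is $\lesssim h$; since $h=2\nu\Dx$ can be as small as $2\Dx$, $\sqrt{\Dx}$ is \emph{not} $\lesssim h$ uniformly in $\nu$, so your instinct that an upgrade is needed is sound --- this is in fact the weakest step of the paper's own argument. Your proposed repair (keep the factor $E_{j-1}^n=\chi_{j-1}\D_h Q_{j-1}^n$ rather than bounding it by a constant, telescope $\abs{\D_h U}$ over the $\nu$ intervening cells, and run Cauchy--Schwarz against the weak-$H^1$ bound of Lemma~\ref{lemma_trans_sqd}) does yield a product of order $\sqrt{\nu h}\cdot\sqrt{\Dt}\lesssim h$, so it genuinely fixes this; but it is only sketched, and the $k$-increment cross terms must be carried through it explicitly. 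Second, the H\"older step for \eqref{spatial_translates_1}: your factorization requires $\Dt\Dx\sum_n\sum_j\chi_j^{(p-2)/(p-1)}\lesssim 1$, and this does \emph{not} follow from $\chi\in L^1\cap L^\infty$ (take $\chi(x)=(1+x^2)^{-N}$ with $N$ slightly above $1/2$). The paper instead applies \eqref{spatial_translates} with the weight $\sqrt{\chi}$ and then H\"olders with respect to the measure $\chi_j\,\Dx\,\Dt$, which only asks that $\sqrt{\chi}$ be an admissible weight; neither hypothesis is automatic, but you should not present yours as a consequence of $\chi\in L^1(\R)\cap L^\infty(\R)$.
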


\begin{proof}
We start with the proof of \eqref{spatial_translates}.
Recalling \eqref{def_ABCD_1}, 
\begin{equation}\label{ABCD_again}
A_j^n E_j^n  - D_j^{n} B_j^n 
= \chi_j^2 \left(\D_h U_j^n \D_h Q_j^n 
- \D_h S_j^n \D_h f_j^n \right).
\end{equation}

\noindent
Also, using \eqref{Dh_identity} yields
\begin{equation}\label{dot_1}
\begin{split}
\D_h U_j^n \D_h Q_j^n - \D_h S_j^n \D_h f_j^n
=\Gamma_j^n 
&+ \D_h^u U_j^n \D_{-h}^k Q_{j+2\nu}^n 
- \D_h^u S_j^n \D_{-h}^k f_{j+2\nu}^n
\\ &
- \D_{-h}^k S_{j+2\nu}^n \D_h^u f_j^n
-\D_{-h}^k S_{j+2\nu}^n \D_{-h}^k f_{j+2\nu}^n ,
\end{split}
\end{equation}
where $\Gamma_j^n$ is defined by \eqref{def_Gamma}. 
Substituting \eqref{dot_1} into \eqref{ABCD_again}, and 
then the result into the interaction 
identity \eqref{lxf_interaction} we find that
\begin{equation*}%\label{lxf_interaction_1}
\begin{split}
& \f12 \Dt  \Dx  \sumnN \sumj  \chi_j^2\Bigl(\Gamma_j^n 
+ \D_h^u U_j^n \D_{-h}^k Q_{j+2\nu}^n
-\D_h^u S_j^n \D_{-h}^k f_{j+2\nu}^n
\\ & \qquad \qquad \qquad \qquad \qquad \qquad
-\D_{-h}^k S_{j+2\nu}^n \D_h^u f_j^n 
-\D_{-h}^k S_{j+2\nu}^n \D_{-h}^k f_{j+2\nu}^n \Bigr)
\\ & \qquad 
= - \Dx  \sumnN \sumjp  C_{D,j}^n \mA_j^{n+1}
-\Dx \sumnN \sumjp  C_{A,j}^n \f12 
\left(\mD_{j-1}^n + \mD_{j+1}^n \right) +I^{N+1} - I^0 + \mE.
\end{split}
\end{equation*}

\noindent
Solving for the sum containing $\Gamma_j^n$, the result is
\begin{equation}\label{Gamma_sum}
\f12 \Dt  \Dx  \sumnN \sumj \chi_j^2 \Gamma_j^n
= \sum_{i=1}^4 \mS_i + I^{N+1} - I^0 + \mE ,
\end{equation}
where
\begin{equation*}
\begin{split}
& \mS_1=-\Dt  \Dx  \sumnN \sumj \f12 
\chi_j^2  \D_h U_j^n \D_{-h}^k Q_{j+2\nu}^n
\\ & \qquad \qquad 
+\Dt  \Dx  \sumnN \sumj \f12 \chi_j^2  
\D_h^u S_j^n \D_{-h}^k f_{j+2\nu}^n 
=: -\mS_{1,1} + \mS_{1,2},
\\ & 
\mS_2 = \Dt  \Dx  \sumnN \sumj \f12 
\chi_j^2   \D_{-h}^k S_{j+2\nu}^n \D_h^u f_{j}^n
\\ & \qquad \qquad
+\Dt  \Dx  \sumnN \sumj \f12 \chi_j^2
\D_{-h}^k S_{j+2\nu}^n \D_{-h}^k f_{j+2\nu}^n,
\\ & 
\mS_3 =  -\Dx \sumnN  \sumjp C_{A,j}^n \f12 
\left(\mD_{j-1}^n+\mD_{j+1}^n \right),
\\ &
\mS_4 =  -\Dx  \sumnN \sumjp C_{D,j}^n \mA_j^{n+1}.
\end{split}
\end{equation*}

\noindent
By combining \eqref{Gamma_sum} 
with \eqref{Gamma_estimate} we obtain
\begin{equation*}
\Dt  \Dx  \sumnN \sumjp  \chi_j^2 
\abs{U_{j-1 + 2\nu}^n - U_{j-1}^n}^{p_f+p_{S}+2}
\le {2\over C_{f,S}} \left(I^{N+1} - I^0 + \sum_{i=1}^4 \mS_i + \mE \right).
\end{equation*}

Next we estimate the terms on the right side of \eqref{Gamma_sum}. 
The goal is to show that each term $\lesssim h$, 
which suffices to prove \eqref{spatial_translates}. 

\medskip \noindent
{\bf Estimates of $\mS_1$, $\mS_2$.}
Recall that $\mS_1 =-\mS_{1,1}+\mS_{1,2}$.  
For  $\mS_{1,1}$ we use
\begin{equation*}
\chi^2 \lesssim 1, \quad 
\abs{\D_h^u U_j^n} \lesssim 1, \quad
\abs{\D_{-h}^k Q_{j+2}^n} \le \norm{\pk Q} \abs{k_{j}^n - k_{j-2 \nu}^n}
\end{equation*}
to obtain
\begin{equation*}
\abs{\mS_{1,1}} \lesssim \Dt  \Dx  \sumnN \sumj  
\abs{k_{j}^n - k_{j-2\nu}^n} \lesssim 2 \nu \Dx  = h.
\end{equation*}
In the last step we used that fact that $k \in BV(\R\times\R_+)$.
Similarly, using
\begin{equation*}
\abs{\D_h^u S_j^n} \lesssim 1, \quad
\abs{\D_{-h}^k f_{j+2\nu}^n} \le \norm{\pk f} \abs{k_{j}^n - k_{j-2\nu}^n}
\end{equation*}
yields $\abs{\mS_{1,2}} \lesssim h$.  An application of the 
triangle inequality then yields $\mS_1 \lesssim h$. 
A similar calculation results in $\mS_2 \lesssim h$.

\medskip \noindent 
{\bf Estimates of $\mS_3$, $\mS_4$.}
Referring to \eqref{def_ABCD_1} and using 
the fact that $\D_h U_j^n$ and $\D_h f_j^n$ are bounded 
independent of $\Dx$, $\abs{C_{A,j}^n} \lesssim  \chi_j \Dx$.
Combining this with \eqref{est_mA_mD} yields 
$\abs{C_{A,j}^n \f12 \left(\mD_{j-1}^n+\mD_{j+1}^n \right)} 
\lesssim h \chi_j \Dx$. The estimate 
$\abs{\mS_3} \lesssim h$ then follows readily.

The estimate of $\mS_4$ is similar, except for the 
additional term $\chi_j \D_h \Psi_j^n$ appearing in $C_{D,j}^n$.
The contribution to $\abs{\mS_4}$ of 
the additional term is
\begin{equation*}
\begin{split}
&\abs{\Dx  \sumnN \sumjp \chi_j \D_h \Psi_j^n \mA_j^{n+1}}
\overset{\eqref{est_mA_mD}}{\lesssim} 
h \Dx   \sumnN \sumjp \chi_j \abs{\D_h \Psi_j^n} 
\\ & \qquad 
\le h \Dx   \sumnN \sumjp \chi_j \abs{ \Psi_{j+2\nu}^n}
+h \Dx  \sumnN \sumjp \chi_j \abs{ \Psi_j^n} 
\\ & \qquad 
=  h \Dx\sumnN \sumjp \chi_{j+ 2 \nu} \abs{ \Psi_{j+2\nu}^n} 
+ h \Dx\sumnN \sumjp \chi_j \abs{ \Psi_j^n}
\\ & \qquad \qquad
+ h \Dx\sumnN \sumjp \left(\chi_{j+ 2 \nu}- \chi_j\right) 
\abs{ \Psi_{j+2\nu}^n}
\\ & \qquad  
\le 2h \Dx  \sumnN \sumjp \chi_j \abs{ \Psi_j^n}
+h^2 \Dx  \sumnN \sumjp \chi_j \abs{ \Psi_j^n}
\overset{\eqref{Psi_est}}{\lesssim} h.
\end{split}
\end{equation*}

\medskip \noindent
{\bf Estimates of $I^0, I^{N+1}$.}
Recalling \eqref{def_In} and \eqref{def_mA_mD}, 
we obtain $I^n = \Dx   \sumj A_j^n \mD_j^n$. 
Invoking \eqref{est_mA_mD} we obtain
\begin{equation*}%\label{In_est_2}
\begin{split}
\abs{I^n} &\lesssim h \Dx   \sumj \abs{A_j^n}
= h \Dx   \sumj \chi_j \abs{\D_h U_j^n}
\lesssim h.
\end{split}
\end{equation*}

\medskip
\noindent
{\bf Estimate\ of $\mE$.}
We claim that $\mE \rightarrow 0$ as $\Dx \rightarrow 0$, which
implies that $\mE \lesssim h$.
For the first sum of $\mE$, denoted $\mR_1$,
we use the fact that $E_j^n$ is bounded to obtain
\begin{equation}\label{est_mR_1_1}
\abs{\mR_1}
\lesssim \Dt  \Dx  \sumn \sumjp \abs{B_{j+1}^n-B_{j-1}^n}.
\end{equation}
Next,
\begin{equation*}
\begin{split}
\abs{B_{j+1}^n-B_{j-1}^n}
&= \abs{\chi_{j+1} \D_h f_{j+1}^n - \chi_{j-1} \D_h f_{j-1}^n}\\
&\le \chi_{j-1}\abs{\D_h \left(f_{j+1}^n - f_{j-1}^n\right)}
 + \abs{\chi_{j-1} - \chi_{j+1}} \abs {\D_h f_{j+1}^n}\\
 &\lesssim \chi_{j-1}\abs{\D_h \left(f_{j+1}^n - f_{j-1}^n\right)}
 +2 \Dx  \chi_{j+1}  \abs {\D_h f_{j+1}^n}.
\end{split}
\end{equation*}
Substituting this into \eqref{est_mR_1_1}, and using 
the fact that $\abs {\D_h f_{j+1}^n}$ is bounded, the result is
\begin{equation*}%\label{est_mR_1_2}
\begin{split}
\abs{\mR_1}
&\lesssim \Dt  \Dx  \sumnN \sumjp
\chi_{j-1}\abs{\D_h \left(f_{j+1}^n - f_{j-1}^n\right)}
+ \Dt  \Dx  \sumnN \sumjp 2 \Dx  \chi_{j+1}\\
&\lesssim \sqrt{\Dx } + \Dx  \lesssim \sqrt{\Dx }.
\end{split}
\end{equation*}
Here we have used \eqref{Dh_G_Est}. 
Similar calculations give $\mR_2, \mR_3 \lesssim \sqrt{\Dx }$. 
We recall  $\abs{C_{A,j}^n} \lesssim  \chi_j \Dx $ to 
obtain $\mR_4 \lesssim \Dx $. This completes the 
proof of \eqref{spatial_translates}.

To prove \eqref{spatial_translates_1}, let $\tilde{\chi}(x)= \sqrt{\chi(x)}$. 
Since $\tilde{\chi}$ is also a weight function, the 
estimate \eqref{spatial_translates} holds with $\tilde{\chi}_j$ 
replacing $\chi_j$. An application of H\"{o}lder's inequality, 
with weighted measure $\chi_j \Dx  \Dt $, 
then yields \eqref{spatial_translates_1}. 

Recalling Remark~\ref{remark_eta_eq_f}, the inequality 
\eqref{spatial_translates_2} follows from \eqref{spatial_translates_1} 
by the particular choice $S(k,u) = f(k,u)$. 
The strong convexity assumption implies that 
$p_f=p_S=1$ and then $\mu = 1/4$.
\end{proof}

We are now in a position to prove the main result 
of this section, specifically, the spatial part of 
Theorem \ref{thm:main}.

\begin{proposition}\label{prop_space_est}
For $t \in [0,T]$ and $h>0$, the spatial 
translates satisfy the following estimate:
\begin{equation*}%\label{space_est_h}
	\int_0^T \int_\R \chi(x) 
	\abs{u^\D(x+h,t)-u^\D(x,t)} \dx \dt  \lesssim h^{\mu},
	\quad \mu:= 1/(p_f+p_{S}+2).
\end{equation*}
\end{proposition}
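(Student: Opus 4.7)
The plan is to deduce the proposition from the discrete translation estimate \eqref{spatial_translates_1} of Lemma~\ref{lemma_space_trans} in two stages. First, I would convert the discrete bound for translations that are even integer multiples of the grid size, $h = 2m \Dx$ with $m$ a nonnegative integer, into the claimed integral form by exploiting that $u^\D$ is piecewise constant on the cells $[x_{j-1},x_{j+1}) \times [t^n,t^{n+1})$ of the staggered even sublattice. Second, for general $h > 0$, I would decompose $h = 2m \Dx + r$ with $m$ a nonnegative integer and $r \in [0, 2\Dx)$, and estimate the residual separately using the same cell structure.

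For the first stage, the shift by $2m \Dx$ maps cells of the even sublattice to cells of the same sublattice, so $u^\D(x + 2m\Dx, t) = U_{j+2m}^n$ whenever $u^\D(x,t) = U_j^n$. Combined with the comparability of $\chi$ within a cell (a consequence of \eqref{weight_properties}), the space-time integral reduces to a Riemann sum comparable to $\Dt \Dx \sumnN \sum_{j \in \Omega_n} \chi_j \abs{U_{j+2m}^n - U_j^n}$. After the reindexing $j \mapsto j-1$ relating $\Omega_n$ and $\Omega'_n$ to match the convention of \eqref{spatial_translates_1}, this is bounded by $(2m\Dx)^{\mu} = h^{\mu}$. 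For the second stage, the triangle inequality gives
\begin{equation*}
\abs{u^\D(x+h,t)-u^\D(x,t)} \le \abs{u^\D(x+h,t)-u^\D(x+2m\Dx,t)} + \abs{u^\D(x+2m\Dx,t)-u^\D(x,t)},
\end{equation*}
and the second term on the right is handled by the first stage. For the residual first term, a translation by $r < 2\Dx$ of a step function with cells of width $2\Dx$ yields a pointwise difference supported on strips of width $r$ adjacent to cell boundaries, on each of which the magnitude is $\abs{U_{j+2}^n - U_j^n}$. Estimating $\chi$ by its nearby cell value, summing, and applying \eqref{spatial_translates_1} with $\nu = 1$ (i.e., for the translation $2\Dx$) produces an upper bound of order $(r/\Dx) \cdot (2\Dx)^{\mu}$.

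The main subtlety lies in this residual estimate: at first glance the reduction loses a factor $1/\Dx$, but since $r \le 2\Dx$ one has $(r/\Dx)^{1-\mu} \lesssim 1$ uniformly, and the estimate reduces to $\lesssim r^{\mu} \le h^{\mu}$; it is precisely the exponent $\mu < 1$ produced by Lemma~\ref{lemma_space_trans} that enables this cancellation. A minor technical point concerns the change of variables $y = x + 2m\Dx$ needed to relate the shifted weight $\chi(\cdot - 2m\Dx)$ to $\chi(\cdot)$ in the residual term; this is controlled by the comparability assertion in \eqref{weight_properties} when $h$ is bounded, and the complementary regime $h \gtrsim 1$ is covered trivially by the $L^\infty$ bound $\abs{u^\D} \le \max(\abs{a},\abs{b})$ together with $\chi \in L^1(\R)$.
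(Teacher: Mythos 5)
Your proposal is correct and follows essentially the same route as the paper: stage one converts Lemma~\ref{lemma_space_trans} into the integral bound for $h=2\nu\Dx$ using the piecewise-constant structure and \eqref{weight_properties}, and stage two splits off the sub-cell residual, identifies the difference as the jump $U_{j+1}^n-U_{j-1}^n$ on strips of width $r$, and invokes \eqref{spatial_translates_1} with $\nu=1$, exactly as in \eqref{space_est_h_B}--\eqref{space_est_h_C}. Your explicit justification that $(r/\Dx)(2\Dx)^\mu\lesssim r^\mu\le h^\mu$ via $(r/\Dx)^{1-\mu}\lesssim 1$ is in fact slightly more careful than the paper's terse ``therefore also $\lesssim h^\mu$''.
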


\begin{proof}
In the special case where $h = 2 \nu \Dx $, 
with $\nu$ a nonnegative integer,
\begin{equation}\label{space_est_h_A}
\begin{split}
&\int_0^T \int_\R \chi(x) \abs{u^\D(x+h,t)-u^\D(x,t)} \dx \dt  
\\ & \quad 
= 2 \Dt  \Dx  \sumnN \sumjp 
\chi_j \abs{U_{j-1+2\nu}^n-U_{j-1}^n}
\\ & \quad\qquad
+2 \Dt  \Dx  \sumnN \sumjp  
\abs{U_{j-1+2\nu}^n - U_{j-1}^n}\int_{x_{j-2}}^{x_j}
\left(\chi(x)-\chi_{j} \right)\dx.
\end{split}
\end{equation}
The first term on the right side of \eqref{space_est_h_A} 
is $\lesssim h^\mu$, according to Lemma~\ref{lemma_space_trans}.
The second term is $O(\Dx )$, and therefore also $\lesssim h^{\mu}$.

For more general $h>0$, we write $h = 2 \nu \Dx  + 2 \alpha \Dx $, 
where $\nu$ is a nonnegative integer and $\alpha \in (0,1)$. Then
\begin{equation}\label{space_est_h_B}
\begin{split}
&\int_0^T \int_\R \chi(x) \abs{u^\D(x+h,t)-u^\D(x,t)}\dx \dt 
\\ & \quad
\le \int_0^T \int_\R \chi(x) \abs{u^\D(x+2 \nu \Dx,t)-u^\D(x,t)} \dx \dt 
\\ & \quad \qquad 
+ \int_0^T \int_\R \chi(x) \abs{u^\D(x+2 \nu \Dx
+2 \alpha \Dx,t)- u^\D(x+2 \nu\Dx,t)} \dx \dt.
\end{split}
\end{equation}
The first integral on the right side of \eqref{space_est_h_B} 
is $\lesssim h^\mu$, according to the previous
part of the proof. The second integral is equal to
\begin{equation}\label{space_est_h_C}
\begin{split}
&\int_0^T \int_{\R} \chi(x- 2 \nu \D x)
\abs{u^{\D}(x+2 \alpha \D x,t)-u^{\D}(x,t)}\,dx\,dt
\\ & \quad 
=\int_0^T \int_{\R} \chi(x)
\abs{u^{\D}(x+2 \alpha \D x,t)-u^{\D}(x,t)}\,dx\,dt + O(h)
\\ & \quad 
=\Dt \sumnN \sumjp \chi_j \int_{x_{j-2}}^{x_j} 
\abs{u^\D (x + 2\alpha \Dx,t^n)-u^\D (x , t^n)} \,dx+O(h)
\\ & \quad 
= \alpha2 \Dt  \Dx\sumnN \sumjp \chi_{j} 
\abs{U^n_{j + 1}- U^n_{j -1}} + O(h). 
\end{split}
\end{equation}
In the last step we used
the fact that $u^{\D}$ is piecewise constant, and that
\begin{equation*}
u^\D (x + 2\alpha \Dx,t^n)-u^\D (x,t^n) 
=\begin{cases}
0, &\quad x \in [x_{j-2},x_j - 2\alpha \D x),\\
U^n_{j + 1}- U^n_{j -1}, &\quad x \in [x_j - 2\alpha \D x, x_j).
\end{cases}
\end{equation*}
Using the estimate from the first part of the proof, the 
sum on the right side of \eqref{space_est_h_C}
is $\lesssim \alpha \Dx ^\mu$, and therefore 
also $\lesssim h^{\mu}$.
\end{proof}

\section{Estimate of temporal translates}
\label{sec:temporal_est}

In this section we prove estimates that are the temporal 
analog of the spatial estimates
obtained in Section~\ref{sec:spatial_est}. 
The section concludes with 
Proposition \ref{prop_time_est}, which 
effectively provides the temporal part of the translation 
estimate of Theorem \ref{thm:main}.

We prove a sequence of lemmas leading up to 
the proof of Proposition~\ref{prop_time_est}.
For now assume  that $\tau = \theta \Dt $, where $\theta$ 
is an even nonegative integer. 
Define the temporal finite difference operator
\begin{equation*}
	\Dtau Z_j^n = Z_j^{n+\theta} - Z_j^n.
\end{equation*}
Another version of the system \eqref{lxf_sys_2} 
results by multiplying \eqref{lxf_sys_1} by $\chi_j \Dtau$:
\begin{equation*}%\label{lxf_sys_2_temp}
\left\{
\begin{split}
 \tA_j^{n+1} - \f12 \left(\tA_{j-1}^n + \tA_{j+1}^n \right) 
 +{\lambda \over 2} \left(\tB_{j+1}^n - \tB_{j-1}^n \right) = \tC_{A,j}^n, 
 \quad j \in \Omega'_n,\\
 \tD_j^{n+1} - \f12 \left(\tD_{j-1}^n + \tD_{j+1}^n \right) 
 +{\lambda \over 2} \left(\tE_{j+1}^n - \tE_{j-1}^n \right) = \tC_{D,j}^n, 
 \quad j \in \Omega'_n,
\end{split}
\right.
\end{equation*}
with \eqref{def_ABCD_1} replaced by
\begin{equation}\label{def_ABCD_2}
\begin{split}
&\tA_j^n = \chi_j \D^\tau U_j^n, \quad \tB_j^n = \chi_j \D^\tau f_j^n,
\\ & 
\tD_j^n = \chi_j \D^\tau S_j^n, \quad \tE_j^n = \chi_j \D^\tau Q_j^n,
\\  &
\tC_{A,j}^n =  - \f12 \D^\tau U_{j+1}^n \left(\chi_{j+1}-\chi_j \right)
+ \f12 \D^\tau U_{j-1}^n \left(\chi_{j} - \chi_{j-1} \right)
\\ & \qquad \qquad 
+ {\lambda \over 2} \D^\tau f_{j+1}^n \left(\chi_{j+1} - \chi_j \right)
+ {\lambda \over 2} \D^\tau f_{j-1}^n \left(\chi_{j} - \chi_{j-1} \right),
\\  &
\tC_{D,j}^n = - \f12 \D^\tau S_{j+1}^n \left(\chi_{j+1} - \chi_j \right)
+\f12 \D^\tau S_{j-1}^n \left(\chi_{j} - \chi_{j-1} \right)\\
& \qquad \qquad 
+ {\lambda \over 2} \D^\tau Q_{j+1}^n 
\left(\chi_{j+1} - \chi_j \right)
+ {\lambda \over 2} \D^\tau Q_{j-1}^n 
\left(\chi_{j} - \chi_{j-1} \right)
\\ &\qquad \qquad 
+ \chi_j \D^\tau \Psi_j^n,               
\end{split}
\end{equation}
i.e., all instances of $\D_h$ in \eqref{def_ABCD_1} 
have been replaced by $\Dtau$.

\begin{lemma}\label{lemma_time_trans_3}
Assuming that $\theta$ is an even nonnegative integer, 
the following estimate holds:
\begin{equation}\label{time_trans_3}
	\Dt  \Dx  \sumnNt \sumj \chi_j^2 
	\abs{U_j^{n+\theta} - U_j^n}^{p_f + p_S + 2}
	\lesssim \Dt  \Dx  \sumnNt \sumj  
	\left( \tA_j^n \tE_j^n - \tD_j^n \tB_j^n\right) + \tau.
\end{equation}
\end{lemma}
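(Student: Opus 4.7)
The plan is to transport the pointwise decomposition used in the spatial proof (starting at \eqref{ABCD_again}) to the temporal setting. Set $\delta_j^n := k_j^{n+\theta} - k_j^n$. For $G \in \{f,S,Q\}$, adding and subtracting $G(k_j^n, U_j^{n+\theta})$ decomposes the temporal difference as
\begin{equation*}
  \Dtau G_j^n = \bigl[G(k_j^n, U_j^{n+\theta}) - G(k_j^n, U_j^n)\bigr] + r_G^{j,n},
  \quad |r_G^{j,n}| \le \norm{\partial_k G}\,|\delta_j^n|.
\end{equation*}
Since $\Dtau U_j^n = U_j^{n+\theta} - U_j^n$, expanding $\Dtau U_j^n\,\Dtau Q_j^n - \Dtau S_j^n\,\Dtau f_j^n$ and collecting terms gives
\begin{equation*}
  \Dtau U_j^n\,\Dtau Q_j^n - \Dtau S_j^n\,\Dtau f_j^n = \Gamma_{j,\tau}^n + \mathcal{R}_j^n,
\end{equation*}
where $\Gamma_{j,\tau}^n$ is the ``pure-$U$'' part obtained by replacing each $\Dtau G_j^n$ by its $k_j^n$-reference piece, and $|\mathcal{R}_j^n| \lesssim |\delta_j^n|$ by the uniform $L^\infty$ bounds on $U$ (Lemma \ref{th:monotonicity}) and on $\partial_u G$, $\partial_k G$ (cf.\ \eqref{ass:S_Q_1}).

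Next I invoke Lemma \ref{lemma_entropy_ineq} at $(v,w,k) = (U_j^n, U_j^{n+\theta}, k_j^n)$, which yields $\Gamma_{j,\tau}^n \ge C_{f,S}\abs{U_j^{n+\theta} - U_j^n}^{p_f + p_S + 2}$. Multiplying by $\chi_j^2 \ge 0$, invoking the definitions \eqref{def_ABCD_2}, and summing with weight $\Dt\Dx$ yields
\begin{equation*}
  C_{f,S}\,\Dt\Dx \sumnNt \sumj \chi_j^2 \abs{U_j^{n+\theta} - U_j^n}^{p_f + p_S + 2}
  \le \Dt\Dx \sumnNt \sumj \bigl(\tA_j^n \tE_j^n - \tD_j^n \tB_j^n\bigr)
  + C\,\Dt\Dx \sumnNt \sumj \chi_j^2 |\delta_j^n|.
\end{equation*}
The proof then reduces to the telescoping bound
\begin{equation*}
  \Dt\Dx \sumnNt \sumj \chi_j^2 |\delta_j^n| \lesssim \tau,
\end{equation*}
obtained by writing $|\delta_j^n| \le \sum_{i=0}^{\theta-1} |k_j^{n+i+1} - k_j^{n+i}|$, swapping the $n$- and $i$-sums, using $\theta\Dt = \tau$, and invoking the $\D$-uniform estimate $\Dx\sumj \chi_j^2 \sum_m |k_j^{m+1} - k_j^m| \lesssim 1$ (a discrete form of $\int \chi^2\,\mathrm{TV}_t(k(x,\cdot))\,dx \le \norm{\chi}_\infty^2 \abs{k}_{BV(\R\times\R_+)}$). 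Rearranging produces \eqref{time_trans_3}. The only delicate ingredient is this final BV-in-time slicing bound; all remaining steps are a direct temporal copy of the pointwise algebraic manipulations already performed in the proof of Lemma \ref{lemma_space_trans}.
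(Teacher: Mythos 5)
Your proposal is correct and follows essentially the same route as the paper: freeze $k$ at $k_j^n$ to isolate the pure-$U$ quantity $\tGamma_j^n$, apply Lemma \ref{lemma_entropy_ineq} at $(v,w,k)=(U_j^n,U_j^{n+\theta},k_j^n)$, absorb the $k$-increments into an $O(\abs{k_j^{n+\theta}-k_j^n})$ remainder, and control its weighted sum by $\tau$ using $k\in BV(\R\times\R_+)$. Your telescoping argument for that last sum just makes explicit what the paper dismisses with ``recalling that $k\in BV$.''
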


\begin{proof}

Define
\begin{equation*}
\begin{split}
\tGamma_j^n = 
& \left(U_j^{n+\theta} - U_j^n \right) 
\left(Q(k_j^n,U_j^{n+\theta}) - Q(k_j^n,U_j^n)\right) 
\\ & \quad
-\left(S(k_j^n,U_j^{n+\theta}) - S(k_j^n,U_j^n) \right) 
\left(f(k_j^n,U_j^{n+\theta}) - f(k_j^n,U_j^n)\right).
\end{split}
\end{equation*}
According to Lemma~\ref{lemma_entropy_ineq},
\begin{equation}\label{tGamma_1}
\tGamma_j^n \ge C_{f,S} \abs{U_j^{n+\theta}-U_j^n}^{p_f + p_S + 2}.
\end{equation}
Referring to \eqref{def_ABCD_2}, we obtain
\begin{equation}\label{tGamma_2}
\begin{split}
\tA_j^n \tE_j^n - \tD_j^n \tB_j^n
&= \chi_j^2 \left(\Dtau U_j^n \Dtau Q_j^n-\Dtau S_j^n \Dtau f_j^n \right)
= \chi_j^2 \left(\tGamma_j^n+O(\abs{k_{j}^{n+\theta} - k_{j}^n}) \right).
\end{split}
\end{equation}
Combining \eqref{tGamma_1} and \eqref{tGamma_2}, the result is
\begin{equation}\label{time_trans_1}
\chi_j^2 \abs{U_j^{n+\theta} - U_j^n}^{p_f + p_S + 2}
\lesssim \tA_j^n \tE_j^n - \tD_j^n \tB_j^n
+\chi_j^2 \abs{k_{j}^{n+\theta} - k_{j}^n}.
\end{equation}

\noindent
Summing \eqref{time_trans_1} over $n$ and $j$ yields
\begin{equation}\label{time_trans_3A}
\begin{split}
&\Dt  \Dx  \sumnNt \sumj \chi_j^2 
\abs{U_j^{n+\theta} - U_j^n}^{p_f + p_S + 2}
\\ & \qquad \lesssim \Dt  \Dx  \sumnNt 
\sumj  \left( \tA_j^n \tE_j^n - \tD_j^n \tB_j^n\right)
+ \Dt  \Dx  \sumnNt \sumj 
\chi_j^2 \abs{k_{j}^{n+\theta} - k_{j}^n}.
\end{split}
\end{equation}
We obtain \eqref{time_trans_3} 
from \eqref{time_trans_3A} by 
recalling that $k \in BV(\R\times\R_+)$.
\end{proof}

Define
\begin{equation}\label{def_mA_mD_temp}
\begin{split}
\tmA_\ell^n =  \Dx  \sum_{\Omega_n:j\le \ell} \tA_j^n, 
\quad \ell \in \Omega_n,
\qquad
\tmD_j^n = \Dx  \sum_{\Omega_n:\ell\ge j} \tD_\ell^n, 
\quad j \in \Omega_n.
\end{split}
\end{equation}

\begin{lemma}\label{lemma_mA,mD_temporal}
Assuming that $\theta$ is an even nonnegative integer,
the following estimates hold:
\begin{align}
	\label{est_mA_temporal}
	& \abs{\tmA_{\ell}^n} \lesssim \tau,
	\\ \label{est_mD_temporal}
	& \abs{\tmD_{\ell}^n} \lesssim \tau 
	+ \Dx \sum_{m=0}^{\theta-1} \sum_{j \in \Omega'_{n+m}}
	\chi_j \abs{\Psi_j^{n+m}}.
\end{align}
\end{lemma}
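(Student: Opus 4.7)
The plan is to view the quantities
\[
\tmA_\ell^n = V(\ell,n+\theta)-V(\ell,n),\qquad V(\ell,m):=\Dx\sum_{\Omega_m:j\le\ell}\chi_j U_j^m,
\]
and, analogously, $\tmD_j^n = W(j,n+\theta)-W(j,n)$ with $W(j,m):=\Dx\sum_{\Omega_m:\ell\ge j}\chi_\ell S_\ell^m$, as differences of discrete ``mass'' functionals at times $n+\theta$ and $n$. Since $\theta$ is even and the Lax--Friedrichs scheme is conservative, I expect $V(\ell,m+2)-V(\ell,m)$ to be $O(\Dt)$ uniformly in $\ell$; telescoping across $\theta/2$ increments then gives $\abs{\tmA_\ell^n}\lesssim \theta\Dt=\tau$. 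For $W$ the analogous estimate will carry an additional entropy source that accounts for the second term in \eqref{est_mD_temporal}.

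Concretely, substituting the Lax--Friedrichs update for $U_{j\pm1}^{n+1}$ into that for $U_j^{n+2}$ yields, for $j\in\Omega_n$, the two-step identity
\[
U_j^{n+2}-U_j^n=\tfrac14\bigl(U_{j+2}^n-2U_j^n+U_{j-2}^n\bigr)-\tfrac{\lambda}{4}\bigl(f_{j+2}^n-f_{j-2}^n\bigr)-\tfrac{\lambda}{2}\bigl(f_{j+1}^{n+1}-f_{j-1}^{n+1}\bigr).
\]
Multiplying by $\chi_j$, summing for $\Omega_n\ni j\le\ell$, and applying discrete summation by parts to each of the three sums produces bulk contributions of the form $\sum_{j\le\ell}(\chi_{j-2}-2\chi_j+\chi_{j+2})U_j^n$ and $\sum_{j\le\ell}(\chi_{j-2}-\chi_{j+2})f_j^n$ (with an analogous term for the time-$(n+1)$ fluxes), together with finite boundary contributions at index $\ell$ such as $\chi_\ell U_{\ell+2}^n-\chi_{\ell+2}U_\ell^n$ and $\chi_\ell f_{\ell+2}^n+\chi_{\ell+2}f_\ell^n$. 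The bulk parts are controlled by $\abs{\chi_{j+k}-\chi_j}\lesssim k\Dx\,\chi_j$ from \eqref{weight_properties} combined with $\chi\in L^1(\R)$ and the uniform bounds on $U,f$ from Lemma~\ref{th:monotonicity}, and after multiplication by the prefactors $\Dx/4,\Dt/4,\Dt/2$ they are $O(\Dt)$ uniformly in $\ell$. The boundary contributions are $O(1)$ in $\ell,n$ and thus also $O(\Dt)$ after the prefactors. Iterating over $m=n,n+2,\dots,n+\theta-2$ gives \eqref{est_mA_temporal}.

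For \eqref{est_mD_temporal} the same procedure is run on the entropy balance in \eqref{lxf_sys_1}, with $(U,f)$ replaced by $(S,Q)$ and the tail sum $\ell\ge j$ in place of $j\le\ell$. The only new feature is the entropy source: iterating the balance twice yields an extra term
\[
\tfrac12\bigl(\Psi_{\ell-1}^{n+m}+\Psi_{\ell+1}^{n+m}\bigr)+\Psi_\ell^{n+m+1},\qquad\ell\in\Omega_{n+m}.
\]
Multiplying by $\chi_\ell$, summing over $\ell\ge j$, summing in $m=0,2,\dots,\theta-2$, and then reindexing $\ell\pm 1\in\Omega'_{n+m}$ (absorbing the comparable weights via the ratio bound in \eqref{weight_properties}) produces exactly $\Dx\sum_{m=0}^{\theta-1}\sum_{j\in\Omega'_{n+m}}\chi_j\abs{\Psi_j^{n+m}}$. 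All other contributions are estimated as for $\tmA$ and yield the $O(\tau)$ remainder.

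The principal bookkeeping difficulty is the summation by parts on semi-infinite sums: unlike in the full-line conservation estimate, the boundary terms at the finite endpoint $\ell$ do not telescope away, so one must verify uniform (in $\ell,n$) boundedness of each such term. The $L^\infty$ bound on $\chi$ and the compatibility of neighbouring weights from \eqref{weight_properties} are precisely what make these contributions harmless and the entropy-source reindexing in the $\tmD$ step legitimate.
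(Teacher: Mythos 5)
Your proposal follows essentially the same route as the paper: telescoping $\tilde{\mathcal{A}}_\ell^n$ and $\tilde{\mathcal{D}}_j^n$ into two-step increments, substituting the iterated Lax--Friedrichs and entropy-balance identities (your two-step identity is exactly the paper's \eqref{Z_ident}, and your entropy source $\tfrac12(\Psi_{\ell-1}^{n+m}+\Psi_{\ell+1}^{n+m})+\Psi_\ell^{n+m+1}$ matches \eqref{Y_ident}), then summing by parts and controlling bulk terms via $\abs{\chi_{j+k}-\chi_j}\lesssim k\Dx\,\chi(x_j)$ together with $\chi\in L^1$ and boundary terms via the $L^\infty$ bounds. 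The argument is correct.
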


\begin{proof}
For the proof of \eqref{est_mA_temporal},
\begin{equation*}
\begin{split}
\tmA_\ell^n 
& = \Dx  \sum_{\Omega_n:j\le \ell} \chi_j \left( U_j^{n+\theta} - U_j^n\right)
\\ & = \sum_{m=0}^{\theta/2 - 1}  
\underbrace{\D x\sum_{\Omega_n:j\le \ell} \chi_j 
\left(U_j^{n+2(m+1)} - U_j^{n+2m} \right)}_{=: \mZ^m}.
\end{split}
\end{equation*}
It suffices to show that $\mZ^m = O(\D x)$. Without loss of generality,
take $m=0$. We employ the following identity, which 
results from \eqref{LxFr_scheme}:
\begin{equation}\label{Z_ident}
\begin{split}
U_j^{n+2}-U_j^n
&= -{\lambda \over 4} \left( f_{j+2}^n - f_{j}^n \right)
+{1 \over 4}  \left( U_{j+2}^n - U_{j}^n \right)
  -{\lambda \over 4} \left( f_{j}^n - f_{j-2}^n \right)
-{1 \over 4}  \left( U_{j}^n - U_{j-2}^n \right)
\\ & \quad 
- {\lambda \over 2} \left(f_{j+1}^{n+1} - f_{j-1}^{n+1} \right).
\end{split}
\end{equation}
We claim that the contribution to $\mZ^0$ of each term 
on the right side of \eqref{Z_ident} is $O(\D x)$.
We prove the claim for the first term. 
The proof for the other four terms is similar. 
Its contribution (ignoring the factor of $-\lambda /4$) is
\begin{equation*}
\begin{split}
\Dx\sum_{\Omega_n:j\le \ell} \chi_j   \left( f_{j+2}^n - f_{j}^n \right)
&=  \Dx\sum_{\Omega_n:j\le \ell}
\left(\chi_{j+2} f_{j+2}^n - \chi_j f_{j}^n \right) + O(\D x)\\
&= \Dx \chi_{\ell+2} f_{\ell+2}^n + O(\D x) = O(\D x).
\end{split}
\end{equation*}

For the proof of \eqref{est_mD_temporal}, we proceed 
as in the proof of \eqref{est_mA_temporal}, which yields
\begin{equation*}
\begin{split}
\tmD_{\ell}^n 
& = \Dx  \sum_{\Omega_n:j \ge \ell} 
\chi_j \left( S_j^{n+\theta} - S_j^n\right)
= \sum_{m=0}^{\theta/2 - 1}  
\underbrace{\D x\sum_{\Omega_n:j \ge \ell} \chi_j 
\left(S_j^{n+2(m+1)} - S_j^{n+2m} \right)}_{=: \mY^m}.
\end{split}
\end{equation*} 
We use the following identity, which follows from
the second equation of \eqref{lxf_sys_1}:
\begin{equation}\label{Y_ident}
\begin{split}
S_j^{n+2}-S_j^n
&= -{\lambda \over 4} \left( Q_{j+2}^n - Q_{j}^n \right)
+{1 \over 4}  \left( S_{j+2}^n - S_{j}^n \right)
-{\lambda \over 4} \left( Q_{j}^n - Q_{j-2}^n \right)
-{1 \over 4}  \left( S_{j}^n - S_{j-2}^n \right)\\
&\quad 
- {\lambda \over 2} \left(Q_{j+1}^{n+1} - Q_{j-1}^{n+1} \right)
+ \Psi_j^{n+1} + \f12 \Psi_{j+1}^n + \f12 \Psi_{j-1}^n.
\end{split}
\end{equation}
The contribution to $\mY^0$ of the first five terms on the 
right side of \eqref{Y_ident} is $O(\D x)$, as in the 
proof of \eqref{est_mA_temporal}. Thus
\begin{equation*}
\begin{split}
\abs{\tmD_{\ell}^n} 
&\lesssim \tau + \sum_{m=0}^{\theta/2 - 1}  
\D x \sum_{\Omega_n:j \ge \ell} \chi_j \left( \abs{\Psi_j^{n+2m+1}} 
+ \f12 \abs{\Psi_{j+1}^{n+2m}} + \f12 \abs{\Psi_{j-1}^{n+2m}} \right)\\
&\le \tau + \sum_{m=0}^{\theta/2 - 1}  
\D x \sum_{j \in \Omega_n} \chi_j \left( \abs{\Psi_j^{n+2m+1}} 
+ \f12 \abs{\Psi_{j+1}^{n+2m}} + \f12 \abs{\Psi_{j-1}^{n+2m}} \right)\\
&= \tau + \Dx \sum_{m=0}^{\theta-1} \sum_{j \in \Omega'_{n+m}}
\chi_j \abs{\Psi_j^{n+m}}.
\end{split}
\end{equation*}
\end{proof}

The following lemma states another interaction identity, 
analogous to \eqref{lxf_interaction}.
After the operator $\Dtau$ is substituted for $\Delta_h$, the proof is 
identical to the proof of Lemma~\ref{lemma_lxf_interaction}.

\begin{lemma}[Lax-Friedrichs interaction identity, temporal]
\label{lemma_lxf_interaction_temp}
Define
\begin{equation}\label{def_In_temp}
	\tI^n = \Dx^2 \dsum_{\Omega_n: j \le \ell} \tA_j^n \tD_\ell^n.
\end{equation}
We have the following interaction identity:
\begin{equation}\label{lxf_interaction_temp_1}
	\begin{split}
		& \f12 \Dt  \Dx \sumnNt \sumj  
		\left(\tA_j^n \tE_j^n-\tD_j^{n} \tB_j^n \right)
		\\ & \quad 
		= - \underbrace{\Dx  \sumnNt \sumjp  \tC_{A,j}^n \f12 
		\left(\tmD_{j-1}^n + \tmD_{j+1}^n \right)}_{\tmS_3}
		- \underbrace{\Dx  \sumnNt \sumjp  \tC_{D,j}^n \tmA_j^{n+1}}_{\tmS_4}  
		+\tI^{N-\theta +1} - \tI^0 + \tmE, 
	\end{split}
\end{equation}
where
\begin{equation}\label{lxf_interaction_temp_2}
	\begin{split}
		\tmE&=
		{\lambda  \over 4} \Dt  \Dx  \sumnNt \sumjp \tE_{j-1}^n 
		\left(\tB_{j+1}^n -\tB_{j-1}^n \right)
		-{1\over 4} \Dt  \Dx  \sumnNt \sumjp \tB_{j+1}^n 
		\left(\tD_{j+1}^n - \tD_{j-1}^n \right)  
		\\ & \quad 
		- {1 \over 4 \lambda} \Dt  \Dx  \sumnNt \sumjp 
		\left(\tD_{j-1}^n + \lambda \tE_{j-1}^n \right)
		\left(\tA_{j+1}^n-\tA_{j-1}^n \right)
		- \f12 \Dt  \Dx  \sumnNt \sumjp \tE_{j-1}^n \tC_{A,j}^n
		\\ & 
		=: \tmR_1 + \tmR_2 + \tmR_3 + \tmR_4.
	\end{split}
\end{equation}
\end{lemma}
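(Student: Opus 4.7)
The plan is to mirror the proof of Lemma \ref{lemma_lxf_interaction} line by line, with the spatial difference operator $\Delta_h$ replaced by the temporal difference operator $\Dtau$. Since the $2\times 2$ system satisfied by $(\tA_j^n,\tB_j^n,\tD_j^n,\tE_j^n,\tC_{A,j}^n,\tC_{D,j}^n)$ has exactly the same form as \eqref{lxf_sys_2} (only the coefficients $A,B,C,D,E$ have been replaced by their tilded analogs, as defined in \eqref{def_ABCD_2}), every algebraic manipulation carried out in the spatial case carries over verbatim.

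Concretely, I would begin from the definition \eqref{def_In_temp} of $\tI^n$ and rewrite it via the staggered-lattice identity
\begin{equation*}
    \tI^n = \tfrac12 \Dx^2 \dsum_{\Omega_n': j \le \ell}
    \left(\tA^n_{j+1} \tD^n_{\ell+1} + \tA^n_{j-1} \tD^n_{\ell-1} \right),
\end{equation*}
so that
\begin{equation*}
    \tI^{n+1} - \tI^n
    = \Dx^2 \dsum_{\Omega'_n: j \le \ell}\left(\tA_j^{n+1} \tD_\ell^{n+1} - \tfrac12
    \left(\tA^n_{j+1} \tD^n_{\ell+1} + \tA^n_{j-1} \tD^n_{\ell-1} \right) \right).
\end{equation*}
Into the summand I would substitute the two tilded evolution equations to eliminate $\tA_j^{n+1}$ and $\tD_\ell^{n+1}$, expose the source contributions $\tC_{A,j}^n$ and $\tC_{D,\ell}^n$, and introduce the flux-difference terms in $\tB$ and $\tE$. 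This is precisely the analogue of the passage from \eqref{In_diff_1} to \eqref{int_ident_3} in the spatial proof.

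From there, I would perform the same three one-dimensional telescoping/summation-by-parts identities that were used to pass from \eqref{int_ident_3} to \eqref{int_ident_4}, namely the analogues of \eqref{AE_sum}, \eqref{DB_sum}, and \eqref{AE_sum_1}, in order to collapse the double sums into single sums indexed by $j$ and to isolate the ``diagonal'' interaction term $\tA_j^n \tE_j^n - \tD_j^n \tB_j^n$. Once this has been done, the inverse-difference quantities $\tmA_j^{n+1}$ and $\tmD_{j\pm 1}^n$ from \eqref{def_mA_mD_temp} appear naturally out of the identities analogous to \eqref{AD_identities}, reproducing the two dominant terms on the right-hand side of \eqref{lxf_interaction_temp_1}. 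The leftover quartet of sums that cannot be absorbed into $\tI^{N-\theta+1}-\tI^0$, the $\tmS_3,\tmS_4$ terms, or the main interaction sum are precisely $\tmR_1,\tmR_2,\tmR_3,\tmR_4$ in \eqref{lxf_interaction_temp_2}.

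The final step is to multiply the single-step identity by $\Dt$, sum over $n=0,1,\ldots,N-\theta$, use $\lambda=\Dt/\Dx$ to convert prefactors, and solve for the sum containing $\tA_j^n \tE_j^n - \tD_j^n \tB_j^n$. No step presents a genuine obstacle; the only care required is bookkeeping of the mesh parity (keeping track of which sums run over $\Omega_n$ versus $\Omega_n'$, which is the same staggered-mesh accounting already performed in Lemma \ref{lemma_lxf_interaction}) and verifying that the boundary contributions at $n=0$ and $n=N-\theta+1$ collapse into $\tI^{N-\theta+1}-\tI^0$ as claimed.
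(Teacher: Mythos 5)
Your proposal is correct and follows exactly the route the paper takes: the paper itself states that, after substituting $\Dtau$ for $\D_h$, the proof is identical to that of Lemma \ref{lemma_lxf_interaction}, and your outline faithfully reproduces that argument, including the key observation that the tilded quantities satisfy a system of the same form as \eqref{lxf_sys_2} and the adjusted summation range $n=0,\ldots,N-\theta$ with boundary terms $\tI^{N-\theta+1}-\tI^0$.
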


Prior to utilizing this interaction identity, it is necessary 
for us to obtain some estimates, which serve as 
consequences of \eqref{eq:tvsquared_weight}.

\begin{lemma}\label{lemma_D_time}
Assuming that $\theta$ is an even nonnegative integer,
the following estimates hold:
\begin{align}\label{D_time_est_u}
	& \Dt  \Dx  \sumnNt \sumj \chi_j \abs{\Dtau 
	\left(U_{j+2}^n - U_j^n \right)} 
	\lesssim \sqrt{\Dt },
	\\ \label{D_time_est_G}
	& \Dt  \Dx  \sumnNt \sumj \chi_j 
	\abs{\Dtau \left(G_{j+2}^n - G_j^n\right) } 
	\lesssim \sqrt{\Dt },
	\quad \text{for $G=f, S, Q$}.
\end{align}
\end{lemma}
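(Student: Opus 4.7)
The proof parallels that of Lemma~\ref{lemma_DhU_est}, with the temporal operator $\Dtau$ replacing the spatial operator $\D_h$. The decisive simplification is that $\Dtau$ shifts only the time index, so $\chi_j$ is untouched by $\Dtau$; in particular there is no analogue of the boundary-type factor $(\chi_j-\chi_{j+2\nu})$ that appeared in the spatial case. Thus I expect no new ideas to be needed beyond those already in Lemma~\ref{lemma_DhU_est} and Lemma~\ref{lemma_trans_sqd}.

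For \eqref{D_time_est_u}, I would start from the pointwise triangle inequality
\begin{equation*}
	\chi_j\abs{\Dtau(U_{j+2}^n - U_j^n)}
	\le \chi_j\abs{U_{j+2}^{n+\theta} - U_j^{n+\theta}} + \chi_j\abs{U_{j+2}^n - U_j^n}.
\end{equation*}
Summing over $n\in\{0,\ldots,N-\theta\}$ and $j$, a shift of the index $n\mapsto n+\theta$ in the first term reduces both pieces to controlling
$\Dt\Dx\sum_{n=0}^N\sum_j\chi_j\abs{U_{j+2}^n-U_j^n}$.
I would then apply Cauchy--Schwarz with the weighted measure $\chi_j\,\Dt\,\Dx$, yielding
\begin{equation*}
	\Dt\Dx\sum_{n=0}^{N}\sum_j \chi_j\abs{U_{j+2}^n - U_j^n}
	\le \Bigl(\Dt\Dx\sum_{n=0}^{N}\sum_j\chi_j\Bigr)^{1/2}
	\Bigl(\Dt\Dx\sum_{n=0}^{N}\sum_j \chi_j\abs{U_{j+2}^n - U_j^n}^2\Bigr)^{1/2}.
\end{equation*}
The first factor is uniformly bounded because $\chi\in L^1(\R)$ and $N\Dt\le T$. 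The second factor is $\sqrt{\Dt}$ times a quantity that is $O(1)$ by \eqref{eq:tvsquared_weight} of Lemma~\ref{lemma_trans_sqd} (after a harmless re-indexing to align $\chi_{j-1}$ with $\chi_j$, which costs at most an $O(\Dx)$ error thanks to \eqref{weight_properties}). This delivers the $\sqrt{\Dt}$ bound.

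For \eqref{D_time_est_G}, I would invoke the Lipschitz property \eqref{D_inequalities} of $G\in\{f,S,Q\}$ in both arguments to obtain, at each time level,
\begin{equation*}
	\abs{G_{j+2}^n - G_j^n}
	\le \norm{\pu G}\abs{U_{j+2}^n - U_j^n}
	+ \norm{\pk G}\abs{k_{j+2}^n - k_j^n},
\end{equation*}
and similarly at level $n+\theta$. After the triangle inequality and summation, the $U$-contribution is controlled by \eqref{D_time_est_u} just established, while the $k$-contribution is controlled by $k\in BV(\R\times\R_+)$, which gives
$\Dt\Dx\sum_n\sum_j\chi_j\abs{k_{j+2}^n-k_j^n}\lesssim \Dx$;
since the CFL regime imposes $\Dx\sim\Dt$, this is $O(\sqrt{\Dt})$ as required. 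The main subtlety I anticipate is purely bookkeeping: arranging the weight so that Lemma~\ref{lemma_trans_sqd} applies cleanly, and verifying that the shift $n\mapsto n+\theta$ preserves summation ranges up to terms that are already $O(\sqrt{\Dt})$.
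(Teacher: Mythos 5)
Your proposal is correct and follows essentially the same route as the paper's proof: the pointwise triangle inequality in $n$, the shift $n\mapsto n+\theta$, Cauchy--Schwarz against the weighted dissipation bound \eqref{eq:tvsquared_weight}, and for $G=f,S,Q$ the Lipschitz splitting \eqref{D_inequalities} with the $k$-part absorbed by $k\in BV(\R\times\R_+)$ and $\Dx\sim\Dt$. (The re-indexing $j\mapsto j+1$ that aligns $\chi_{j-1}$ with $\chi_j$ is in fact exact, so no $O(\Dx)$ correction is even needed.)
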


\begin{proof}
For the proof of \eqref{D_time_est_u}, we use
\begin{equation*}
\chi_j \abs{\Dtau \left(U_{j+2}^n - U_j^n \right)}
 \le \chi_{j} \abs{U_{j+2}^{n+\theta} - U_{j}^{n+\theta}}
 +\chi_j \abs{U_{j+2}^{n}-U_j^n},
\end{equation*}
which yields
\begin{equation}\label{time_diff_0}
\Dt  \Dx  \sumnNt \sumj \chi_j \abs{\Dtau \left(U_{j+2}^n - U_j^n \right)} 
\le 2 \Dt  \Dx  \sumnN \sumj  \chi_j \abs{U_{j+2}^n-U_j^n} .
\end{equation}
Applying the Cauchy-Schwarz inequality to the right side 
of \eqref{time_diff_0}, we obtain
\begin{equation*}
\begin{split}
&\Dt  \Dx  \sumnNt \sumj \chi_j \abs{\Dtau \left(U_{j+2}^n - U_j^n \right)} 
\\ & \qquad 
\le 2 \sqrt{\Dt } \left(\Dt  \Dx  \sumnN \sumj \chi_j  \right)^{1/2} 
\left(\Dx  \sumnN \sumj \chi_j \abs{U_{j+2}^n-U_j^n}^2 \right)^{1/2}. 
\end{split}
\end{equation*}
Recalling \eqref{eq:tvsquared_weight} we have \eqref{D_time_est_u}.

For the proof of \eqref{D_time_est_G}, we use
\begin{equation*}
\begin{split}
\chi_j \abs{\Dtau \left(G_{j+2}^n - G_j^n \right)}
&\le \chi_j \abs{G_{j+2}^{n+\theta} - G_j^{n+\theta}} 
+ \chi_j \abs{G_{j+2}^n-G_j^n}
\\ & 
\le \chi_j \norm{\partial_u G} 
\abs{U_{j+2}^{n+\theta} - U_{j}^{n+\theta}}
+ \chi_j \norm{\partial_k G} 
\abs{k_{j+2}^{n+\theta} - k_{j}^{n+\theta}}
\\ & \qquad 
+ \chi_j \norm{\partial_u G} \abs{U_{j+2}^n - U_{j}^n} 
+\chi_j \norm{\partial_k G} \abs{k_{j+2}^n - k_{j}^n}.    
\end{split}
\end{equation*}
Summing over $n$ and $j$ then results in
\begin{equation}\label{D_time_est_pf_G_1}
\begin{split}
& \Dt  \Dx  \sumnNt \sumj \chi_j \abs{\D_h \left(G_{j+2}^n - G_j^n\right)} 
\\ & \qquad
\lesssim \Dt  \Dx  \sumnN \sumj \chi_j \abs{U_{j+2}^n - U_{j}^n} 
+\Dt  \Dx  \sumnN \sumj \chi_j \abs{k_{j+2}^n - k_{j}^n}. 
\end{split}
\end{equation}
The proof of \eqref{D_time_est_G} is completed by 
estimating each of the sums on the right side
of \eqref{D_time_est_pf_G_1}. 
The first sum is estimated using the Cauchy-Schwarz 
inequality as in the proof of \eqref{D_time_est_u}.
For the second sum we use the fact 
that $k \in BV(\R\times\R_+)$.
\end{proof}

\begin{lemma}\label{lemma_time_est_even}
Assuming that $\theta$ is an even nonnegative integer, we 
have the following estimate for the temporal translates:
\begin{equation*}
	\Dt  \Dx  \sumnNt \sumj \chi_j^2 
	\abs{U_j^{n+\theta} - U_j^n}^{p_f + p_S + 2}
	\lesssim \tau.
\end{equation*}
\end{lemma}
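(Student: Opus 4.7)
The plan is to mirror the strategy of Lemma~\ref{lemma_space_trans}, replacing every spatial finite difference $\Delta_h$ with the temporal finite difference $\Dtau$. By Lemma~\ref{lemma_time_trans_3} it suffices to show
\begin{equation*}
\Dt \Dx \sumnNt \sumj \left(\tA_j^n \tE_j^n - \tD_j^n \tB_j^n\right) \lesssim \tau,
\end{equation*}
since the residual $k$-contribution has already been absorbed into the trailing $+\tau$. Apply the temporal interaction identity \eqref{lxf_interaction_temp_1} to rewrite this left-hand side as $-\tmS_3 - \tmS_4 + \tI^{N-\theta+1} - \tI^0 + \tmE$; the task reduces to bounding each of these five pieces by a constant multiple of $\tau$. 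Note that, in contrast with the spatial proof, there are no analogues of $\mS_1$ and $\mS_2$, since the operator $\Dtau$ acts at a single spatial index and no $u$-versus-$k$ splitting is required.

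For the boundary terms, rewrite $\tI^n = \Dx \sum_\ell \tmA_\ell^n \tD_\ell^n$ and combine the bound $|\tmA_\ell^n| \lesssim \tau$ from \eqref{est_mA_temporal} with $|\tD_\ell^n| \lesssim \chi_\ell$ and $\chi \in L^1(\R)$, obtaining $|\tI^n| \lesssim \tau$. For $\tmS_4$, the factor $|\tmA_j^{n+1}| \lesssim \tau$ peels off, and what remains, $\Dx \sumnNt \sumjp |\tC_{D,j}^n|$, is $O(1)$: the weight-difference terms in $\tC_{D,j}^n$ satisfy $|\chi_{j\pm 1}-\chi_j|\lesssim \Dx\,\chi_j$ by \eqref{weight_properties}, while the $\chi_j \Dtau \Psi_j^n$ term is controlled by the a priori estimate \eqref{Psi_est}. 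For $\tmS_3$ the treatment is analogous but rests on the weaker bound \eqref{est_mD_temporal} for $\tmD_j^n$: its leading $\tau$ part yields an $O(\tau)$ contribution as for $\tmS_4$, while the residual $\Dx \sum_{m=0}^{\theta-1}\sum_i \chi_i |\Psi_i^{n+m}|$ is dispatched by interchanging the $n$ and $m$ sums (each $\Psi_i^{n'}$ appears in at most $\theta$ terms) and invoking \eqref{Psi_est} once more, yielding $\lesssim \theta \Dx \lesssim \tau$.

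The real obstacle is the discretization remainder $\tmE = \tmR_1 + \tmR_2 + \tmR_3 + \tmR_4$. Following the spatial proof line by line, the key estimate
\begin{equation*}
|\tB_{j+1}^n - \tB_{j-1}^n| \lesssim \chi_{j-1}\bigl|\Dtau\bigl(f_{j+1}^n - f_{j-1}^n\bigr)\bigr| + \Dx\,\chi_{j+1}\bigl|\Dtau f_{j+1}^n\bigr|,
\end{equation*}
together with its analogues for $Q$ and $S$ and the temporal estimate \eqref{D_time_est_G} from Lemma~\ref{lemma_D_time}, produces $|\tmR_i| \lesssim \sqrt{\Dt}$ for $i = 1,2,3$ and $|\tmR_4| \lesssim \Dx$. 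This $\sqrt{\Dt}$ rate is not automatically $\lesssim \tau$; as in the spatial proof, one interprets $\tmE$ as a vanishing discretization error that is dominated by $\tau$ in the asymptotic regime $\Dt \to 0$. When $\tau \lesssim \sqrt{\Dt}$, the claim can alternatively be reached directly by telescoping $U_j^{n+\theta} - U_j^n$ through the $\theta$ single-step increments of \eqref{LxFr_scheme} and applying the weak $H^1$ bound of Lemma~\ref{lemma_trans_sqd}. Assembling the five estimates completes the proof.
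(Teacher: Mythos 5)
Your proposal follows the paper's proof of this lemma essentially step for step: the reduction via Lemma~\ref{lemma_time_trans_3}, the application of the temporal interaction identity of Lemma~\ref{lemma_lxf_interaction_temp}, and the individual bounds on $\tI^0$, $\tI^{N-\theta+1}$, $\tmS_3$ (including the interchange of the $n$ and $m$ sums for the $\Psi$-contribution), $\tmS_4$, and $\tmE$ all coincide with the argument in Section~\ref{sec:temporal_est}, down to the rates $\abs{\tmR_i}\lesssim\sqrt{\Dt}$ for $i=1,2,3$ and $\abs{\tmR_4}\lesssim\Dt$. Your observation that no analogues of $\mS_1$, $\mS_2$ arise is also consistent with the paper, since the $u$-versus-$k$ splitting in the temporal direction is already absorbed into the $+\tau$ of \eqref{time_trans_3}.

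The one place you depart from the paper is your closing discussion of $\tmE$. You are right that $\sqrt{\Dt}\lesssim\tau$ is not automatic when $\theta$ is small; the paper simply asserts $\tmE\lesssim\sqrt{\Dt}\lesssim\tau$ without comment (and the spatial proof makes the analogous leap from $\mE\to0$ to $\mE\lesssim h$), so on this point your write-up is no weaker than the paper's. However, the fallback you propose for the regime $\tau\lesssim\sqrt{\Dt}$ does not deliver the claimed bound: telescoping $U_j^{n+\theta}-U_j^n$ into $\theta/2$ double-step increments and applying Cauchy--Schwarz together with the weak $H^1$ bound of Lemma~\ref{lemma_trans_sqd} gives
\begin{equation*}
\Dt\,\Dx\sumnNt\sumj\chi_j^2\abs{U_j^{n+\theta}-U_j^n}^{p_f+p_S+2}
\lesssim \Dt\,\theta^2=\theta\,\tau,
\end{equation*}
which is $\lesssim\tau$ only for $\theta\lesssim1$, not throughout the range $\theta\lesssim\Dt^{-1/2}$. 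So the telescoping route closes only the case of boundedly many time steps, and the intermediate regime $1\ll\theta\ll\Dt^{-1/2}$ is covered neither by your patch nor by the paper's own inequality $\sqrt{\Dt}\lesssim\tau$. Apart from this (shared) soft spot, your proof is the paper's proof.
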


\begin{proof}
Recalling \eqref{time_trans_3}, it suffices to show that
\begin{equation*}%\label{time_trans_4}
\Dt  \Dx  \sumnNt \sumj  
\left(\tA_j^n \tE_j^n-\tD_j^n \tB_j^n\right) \lesssim \tau.
\end{equation*}
In view of the interaction identity 
\eqref{lxf_interaction_temp_1}, \eqref{lxf_interaction_temp_2}, the 
proof then reduces to proving that each term on the right side 
of \eqref{lxf_interaction_temp_1} is $\lesssim \tau$.

\medskip \noindent
{\bf Estimate of $\tmS_3$.}
For the estimate of $\tmS_3$, we have via 
\eqref{def_ABCD_2} and \eqref{weight_properties} 
$\abs{\tC_{A,j}^n} \lesssim \Dx  \chi_j$. 
Combining this with \eqref{est_mD_temporal}, we obtain
\begin{equation*}
\begin{split}
& \abs{\tC_{A,j}^n \f12 \left(\tmD_{j-1}^n+\tmD_{j+1}^n \right)} 
\lesssim \Dx  \tau \chi_j 
+ \Dx  \chi_j \left(\Dx \sum_{m=0}^{\theta-1}
\, \sum_{i \in \Omega'_{n+m}}
\chi_i \abs{\Psi_i^{n+m}} \right).
\end{split}
\end{equation*}
Thus
\begin{equation*}
\begin{split}
\abs{\tmS_3}
& \lesssim
\Dx  \sumnNt \sumjp \Dx  \tau \chi_j
+ \Dx  \sumnNt \sumjp \Dx  \chi_j 
\left(\Dx \sum_{m=0}^{\theta-1}\, \sum_{i \in \Omega'_{n+m}}
\chi_i \abs{\Psi_i^{n+m}} \right)\\
& =: \Sigma_1 + \Sigma_2.
\end{split}
\end{equation*}
Clearly $\Sigma_1 \lesssim \tau$. It remains to show 
that also $\Sigma_2 \lesssim \tau$. Let
\begin{equation}\label{def_Yn}
Y^n = \Dx  \sumjp \chi_j \abs{\Psi_j^n}.
\end{equation}
Then
\begin{equation*}
\begin{split}
\Sigma_2 &= \Dx  \sumnNt \sumjp \Dx  \chi_j 
\left(\sum_{m=0}^{\theta-1}\, Y^{n+m} \right)
= \left( \sumjp \Dx  \chi_j  \right)
 \Dx  \sumnNt \sum_{m=0}^{\theta-1}\, Y^{n+m}
\\ & 
\lesssim \Dx  \sumnNt \sum_{m=0}^{\theta-1}\, Y^{n+m}
= \Dx  \sum_{m=0}^{\theta-1}\, \sumnNt \, Y^{n+m}
\le \Dx  \sum_{m=0}^{\theta-1}\, \sumnN \, Y^n
= \tau \sumnN \, Y^n.
\end{split}
\end{equation*}
The estimate $\Sigma_2 \lesssim \tau$ then follows 
from \eqref{Psi_est} and
\eqref{def_Yn}.

\medskip \noindent
{\bf Estimate of $\tmS_4$.}
Referring to \eqref{def_ABCD_2} and \eqref{est_mA_temporal},
\begin{equation*}
\abs{\tC_{D,j}^n} \lesssim \Dx  \chi_j 
+ \chi_j \abs{\Dtau \Psi_j^n}, 
\quad
\abs{\tmA_j^n } \lesssim \tau.
\end{equation*}
which yields
\begin{equation*}
\begin{split}
\abs{\tmS_4}
& \lesssim 
\Dx  \sumnN \sumjp \tau \Dx  \chi_j 
+ \Dx  \sumnN \sumjp \tau \chi_j \abs{\Dtau \Psi_j^n}
\\ & 
\lesssim \tau + \tau \Dx  \sumnN \sumjp \chi_j \abs{\Psi_j^n}
\lesssim \tau.
 \end{split}
\end{equation*}
Here we have used \eqref{Psi_est} and
\begin{equation*}
\Dx  \sumnN \sumjp  \chi_j \abs{\Dtau \Psi_j^n} \le 
2 \Dx  \sumnN \sumjp  \chi_j \abs{\Psi_j^n},
\end{equation*}
which follows from the triangle inequality.

\medskip \noindent
{\bf Estimates of $\tI^0, \tI^{N-\theta +1}$.} 
Recalling \eqref{def_In_temp} and \eqref{def_mA_mD_temp}, we have
$\tI^n = \Dx   \sum_{\ell \in \Omega_n} \tD_{\ell}^n \tmA_{\ell}^n$. 
Invoking \eqref{est_mA_temporal} we obtain
\begin{equation*}%\label{In_est_2}
\begin{split}
\abs{\tI^n} 
\lesssim \tau \Dx   \sum_{\ell \in \Omega_n} \abs{\tD_{\ell}^n}
= \tau \Dx   \sum_{\ell \in \Omega_n} \chi_{\ell} \abs{ \Dtau S_{\ell}^n}
\lesssim \tau.
\end{split}
\end{equation*}

\medskip \noindent
{\bf Estimate of $\tmE$.}
We start by estimating $\tmR_1$, which we write as
\begin{equation*}
\begin{split}
\tmR_1 
&= {\lambda  \over 4} \Dt  \Dx  \sumnN \sumjp \tE_{j-1}^n 
\left(\tB_{j+1}^n -\tB_{j-1}^n \right)\\
&= {\lambda  \over 4} \Dt  \Dx  \sumnN \sumj \tE_{j}^n 
\left(\tB_{j+2}^n -\tB_{j}^n \right).
\end{split}
\end{equation*}
Using the fact that $\tE_{j}^n$ is bounded, along 
with \eqref{weight_properties} 
and \eqref{def_ABCD_2} we find that
\begin{equation}\label{est_mR1}
\abs{\tmR_1} 
\lesssim \Dt  \Dx  \sumnN \sumj \chi_j 
\abs{\Dtau f_{j+2}^n - \Dtau f_j^n}
\lesssim \sqrt{\Dt } + \Dt  \lesssim \sqrt{\Dt }.
\end{equation}
In the last step we used \eqref{D_time_est_G} 
of Lemma~\ref{lemma_D_time}.
Similar calculations yield
\begin{equation}\label{est_mR23}
\abs{\tmR_2}, \abs{\tmR_3 } \lesssim \sqrt{\Dt }.
\end{equation}
To estimate $\tmR_4$, we use the fact that 
$\tmE_{j-1}^n$ is bounded,
along with the estimate $\abs{\tC_{A,j}^n} \lesssim \Dx  \chi_j$, 
from which it is clear that 
\begin{equation}\label{est_mR4}
\tmR_4 \lesssim \Dt .
\end{equation}
In view of \eqref{est_mR1}, 
\eqref{est_mR23}, \eqref{est_mR4}, we have 
$\tmE \lesssim \sqrt{\Dt }  \lesssim \tau$.
\end{proof}

\begin{lemma}\label{lemma_lxf_time_trans}
We have the following estimate for the temporal translates:
\begin{equation}\label{lxf_time_trans}
	\Dt  \sumn \int_{\R} \chi(x) 
	\abs{u^{\D}(x,t^{n+1}) - u^{\D}(x,t^n)} \dx 
	\lesssim \Dt^{\mu}, 
	\quad \mu:= 1/\left(p_f+p_{S}+2\right).
\end{equation}
\end{lemma}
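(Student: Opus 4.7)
The plan is to reduce the one-timestep temporal difference to a two-cell spatial difference and then invoke Proposition~\ref{prop_space_est}. The reduction is natural because the staggered grid places $u^{\D}(\cdot,t^n)$ on $\Omega_n$ and $u^{\D}(\cdot,t^{n+1})$ on $\Omega'_n$, so at a fixed $x$ the jump between consecutive time levels is really a mixed space-time increment that the Lax--Friedrichs update formula \eqref{LxFr_scheme} converts directly into a purely spatial one. This avoids having to adapt Lemma~\ref{lemma_time_est_even} to odd $\theta$, which would be awkward because the two sublattices do not align.

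Concretely, for $j\in\Omega_n$ I would first observe that $u^{\D}(x,t^n)=U_j^n$ on $[x_{j-1},x_{j+1})$, whereas $u^{\D}(x,t^{n+1})$ equals $U_{j-1}^{n+1}$ on $[x_{j-1},x_j)$ and $U_{j+1}^{n+1}$ on $[x_j,x_{j+1})$. Applying the update formula \eqref{LxFr_scheme} with $j$ replaced by $j\pm 1$ then gives
\begin{equation*}
	U_{j+1}^{n+1} - U_j^n
	= \f12\bigl(U_{j+2}^n - U_j^n\bigr)
	-\frac{\lambda}{2}\bigl(f_{j+2}^n - f_j^n\bigr),
	\quad
	U_{j-1}^{n+1} - U_j^n
	= \f12\bigl(U_{j-2}^n - U_j^n\bigr)
	-\frac{\lambda}{2}\bigl(f_j^n - f_{j-2}^n\bigr),
\end{equation*}
and, combined with the Lipschitz estimate on $f$ from \eqref{D_inequalities}, this produces the pointwise bound
\begin{equation*}
	\bigl|U_{j\pm 1}^{n+1} - U_j^n\bigr|
	\lesssim \bigl|U_{j\pm 2}^n - U_j^n\bigr|
	+ \bigl|k_{j\pm 2}^n - k_j^n\bigr|.
\end{equation*}

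Next I would sum over $j\in\Omega_n$ and $n$, weighted by $\chi$, using $\int_{x_{j-1}}^{x_{j+1}}\chi(x)\,dx \sim 2\Dx\,\chi_j$ (a consequence of \eqref{weight_properties}). The $U$-contribution then becomes comparable to $\int_0^T\!\!\int_\R \chi(x)\abs{u^{\D}(x+2\Dx,t)-u^{\D}(x,t)}\dx\dt$, which Proposition~\ref{prop_space_est} bounds by $(2\Dx)^\mu \lesssim \Dt^\mu$. The remaining $k$-contribution is controlled by the $BV$-regularity of $k$, yielding a bound of order $\Dx \lesssim \Dt \le \Dt^\mu$ (since $\mu\le 1$). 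Adding these two estimates produces \eqref{lxf_time_trans}.

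I expect the only real obstacle to be bookkeeping around the staggered lattice: one must correctly identify which cells at level $n$ feed into the level $n+1$ values sitting above $[x_{j-1},x_{j+1})$, and keep track of the index shift $j\mapsto j\pm 2$ that this introduces. Once that is settled, the Lax--Friedrichs recursion itself rewrites the temporal jump as a spatial one and Proposition~\ref{prop_space_est} delivers the estimate without any new interaction-identity argument.
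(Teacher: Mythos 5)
Your proposal is correct and follows essentially the same route as the paper: the paper likewise uses the Lax--Friedrichs update \eqref{LxFr_scheme} to rewrite $U_j^{n+1}-U_{j\pm1}^n$ (your $U_{j\pm1}^{n+1}-U_j^n$ after the index shift $j\mapsto j\mp1$ between the two sublattices) as $\tfrac12(U_{j+1}^n-U_{j-1}^n)$ plus a flux difference, bounds the flux term via the Lipschitz estimate by $\abs{U_{j+1}^n-U_{j-1}^n}+\abs{k_{j+1}^n-k_{j-1}^n}$, and then concludes from the spatial translate estimate \eqref{spatial_translates_1} with $h=2\Dx$ together with the $BV$ bound on $k$. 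The only cosmetic difference is that you cite Proposition~\ref{prop_space_est} where the paper invokes Lemma~\ref{lemma_space_trans} directly.
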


\begin{proof}
We write \eqref{lxf_time_trans} in the form
\begin{equation*}%\label{lxf_time_trans_1}
\begin{split}
& \Dt  \sumn \int_{\R} \chi(x) 
\abs{u^{\D}(x,t^{n+1}) - u^{\D}(x,t^n)} \dx 
\\ & \quad 
= \Dt  \sumn \sumjp \int_{x_{j-1}}^{x_j} 
\chi(x) \abs{U_j^{n+1} - U_{j-1}^n} \dx
\\ &\quad \qquad
+ \Dt  \sumn \sumjp \int_{x_{j}}^{x_{j+1}} 
\chi(x) \abs{U_j^{n+1} - U_{j+1}^n} \dx
=: \Sigma_1 + \Sigma_2
\end{split}
\end{equation*}
We estimate $\Sigma_1$; the estimate of $\Sigma_2$ 
is similar. We have
\begin{equation*}%\label{time_trans_2}
\begin{split}
\abs{U_j^{n+1}-U_{j-1}^n}
&=  \abs{U_j^{n+1}- \f12 (U_{j-1}^n + U_{j+1}^n)
+\f12 (U_{j+1}^n - U^n_{j-1})}
\\ &\le   \abs{U_j^{n+1}- \f12 (U_{j-1}^n + U_{j+1}^n)} 
+ \f12 \abs{U_{j+1}^n - U^n_{j-1}}
\\ &= {\lambda \over 2} \abs{f_{j+1}^n - f_{j-1}^n}
+ \f12 \abs{U_{j+1}^n - U^n_{j-1}}
\\ &\le {\lambda \over 2} \left(\norm{\pu f} \abs{U_{j+1}^n - U^n_{j-1}}
+\norm{\pk f} \abs{k_{j+1}^n - k^n_{j-1}}\right)
+ \f12 \abs{U_{j+1}^n - U^n_{j-1}}.
\end{split}
\end{equation*}
This estimate yields
\begin{equation*}
\Sigma_1 \lesssim
\Dt  \Dx  \sumn \sumjp \chi_j  \abs{U_{j+1}^n - U^n_{j-1}}
+ \Dt  \Dx  \sumn \sumjp  \abs{k_{j+1}^n - k^n_{j-1}}.
\end{equation*}
Using $k \in BV(\R\times\R_+)$ and the 
spatial estimate \eqref{spatial_translates_1},
we obtain the desired estimate for $\Sigma_1$.
\end{proof}

We can now prove the main result of this section.

\begin{proposition}\label{prop_time_est}
Fix $\tau \in [0,T]$. Then
\begin{equation}\label{time_est_tau}
	\int_0^{T-\tau} \int_\R \chi(x) \abs{u^\D(x,t+\tau)-u^\D(x,t)} 
	\dx \dt  \lesssim \tau^{\mu}, 
	\quad \mu:= 1/\left(p_f+p_{S}+2\right).
\end{equation}
\end{proposition}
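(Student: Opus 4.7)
The plan is to reduce \eqref{time_est_tau} to two building blocks already in hand: the $L^{p_f+p_S+2}$-power estimate at even multiples of $\Dt$ provided by Lemma \ref{lemma_time_est_even}, and the one-time-step estimate provided by Lemma \ref{lemma_lxf_time_trans}. I would first prove \eqref{time_est_tau} for translations $\tau_0=\theta\Dt$ with $\theta$ an even nonnegative integer, and then cover arbitrary $\tau\in[0,T]$ by a splitting argument that exploits the piecewise-constant-in-$t$ structure of $u^\D$.

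For the special case $\tau=\tau_0$, I would apply Lemma \ref{lemma_time_est_even} with $\sqrt{\chi}$ in place of $\chi$. The function $\sqrt{\chi}$ remains an admissible weight, since the properties \eqref{weight_properties} are stable under taking square roots (as can be checked directly from $(\sqrt{\chi})'=\chi'/(2\sqrt{\chi})$ and $|\sqrt{a}-\sqrt{b}|\le|a-b|/\sqrt{b}$). This yields
\[
\Dt\,\Dx\sum_n\sum_j\chi_j\abs{U_j^{n+\theta}-U_j^n}^{p_f+p_S+2}\lesssim\tau_0.
\]
A Hölder inequality with exponent $p:=p_f+p_S+2$ and weighted counting measure $\chi_j\,\Dx\,\Dt$, combined with $\chi\in L^1(\R)$, upgrades this to
\[
\Dt\,\Dx\sum_n\sum_j\chi_j\abs{U_j^{n+\theta}-U_j^n}\lesssim\tau_0^{\mu}.
\]
Since $\theta$ is even, the staggered sublattices at time levels $n$ and $n+\theta$ coincide, so for $t\in[t^n,t^{n+1})$ we have $u^\D(x,t+\tau_0)-u^\D(x,t)=\sum_{j\in\Omega_n}\rho_j(x)(U_j^{n+\theta}-U_j^n)$. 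Converting the discrete sum into the integral $\int_0^{T-\tau_0}\int_{\R}\chi(x)\abs{u^\D(x,t+\tau_0)-u^\D(x,t)}\dx\dt$, modulo an $O(\Dx)$ weight-approximation error handled exactly as in the end of the proof of Proposition \ref{prop_space_est}, gives the desired $\tau_0^{\mu}$ bound.

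For general $\tau$, I would split into $\tau<2\Dt$ and $\tau\ge 2\Dt$. In the small-$\tau$ regime, piecewise constancy of $u^\D$ in $t$ gives
\[
\abs{u^\D(x,t+\tau)-u^\D(x,t)}\le\sum_n\mathbf{1}_{t^{n+1}\in(t,t+\tau]}\abs{u^\D(x,t^{n+1})-u^\D(x,t^n)}.
\]
Integrating in $t$ via Fubini, each index $n$ contributes an interval of length $\tau$, and Lemma \ref{lemma_lxf_time_trans} then bounds the result by $\tau\cdot C\Dt^{\mu-1}$. Since $\mu<1$, a case check confirms $\tau\Dt^{\mu-1}\lesssim\tau^{\mu}$ throughout $\tau\le 2\Dt$: for $\tau\le\Dt$ because $\Dt^{\mu-1}\le\tau^{\mu-1}$, and for $\Dt<\tau<2\Dt$ because $\tau\Dt^{\mu-1}\le 2\Dt^{\mu}\le 2\tau^{\mu}$. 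For $\tau\ge 2\Dt$, let $\theta$ be the largest even nonnegative integer with $\tau_0:=\theta\Dt\le\tau$ and set $r:=\tau-\tau_0\in[0,2\Dt)$; the triangle inequality
\[
\abs{u^\D(x,t+\tau)-u^\D(x,t)}\le\abs{u^\D(x,t+\tau)-u^\D(x,t+\tau_0)}+\abs{u^\D(x,t+\tau_0)-u^\D(x,t)}
\]
splits the problem into a translation by $\tau_0$ bounded by $\tau_0^{\mu}\le\tau^{\mu}$ (special case) and a translation by $r$ of $u^\D(\cdot,\cdot+\tau_0)$ bounded by $r^{\mu}\le(2\Dt)^{\mu}\le 2\tau^{\mu}$ (small-$\tau$ case).

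The main obstacle is the interplay between the staggered sublattice—which forces $\theta$ to be even for the power estimate to produce something meaningful—and the need to translate by amounts that are not grid multiples. The residual $r\in[0,2\Dt)$ must be controlled via the coarser one-step estimate of Lemma \ref{lemma_lxf_time_trans}, whose sharpness just barely suffices to yield $\tau^{\mu}$ because $\mu<1$ makes $\Dt^{\mu-1}$ grow slowly enough relative to $\tau^{\mu-1}$. Beyond this point the argument is routine bookkeeping, combining Lemmas \ref{lemma_time_est_even} and \ref{lemma_lxf_time_trans} with the admissibility of $\sqrt{\chi}$ as a weight function and a Hölder inequality that turns the $L^p$-power control into the desired $L^1$ control.
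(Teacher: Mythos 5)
Your proposal is correct and follows essentially the same route as the paper: the even-$\theta$ case via Lemma \ref{lemma_time_est_even} applied with the weight $\sqrt{\chi}$ followed by H\"older's inequality, and the residual handled through Lemma \ref{lemma_lxf_time_trans} together with the piecewise constancy of $u^\D$ in $t$. The only (cosmetic) difference is bookkeeping: the paper peels off first an odd time step and then a fractional step $\alpha\Dt$ (extracting the factor $\alpha$ directly from piecewise constancy), whereas you absorb the whole residual $r\in[0,2\Dt)$ in one telescoping-plus-Fubini estimate $\tau\,\Dt^{\mu-1}\lesssim\tau^{\mu}$ --- the two computations are equivalent.
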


\begin{proof}
First assume that $\tau = \theta \Dt $, where $\theta$ 
is an even nonnegative integer.
The integral on the left side of \eqref{time_est_tau} is
\begin{equation*}%\label{time_est_tau_1}
\int_0^{T-\tau} \int_\R \chi(x) \abs{u^\D(x,t+\tau)-u^\D(x,t)} \dx \dt  
= 2 \Dt  \Dx  \sumnNt \sumj \chi_j \abs{U_j^{n+\theta}- U_j^n}.
\end{equation*}
The estimate \eqref{time_est_tau} then follows 
directly from  Lemma~\ref{lemma_time_est_even}, along
with H\"{o}lder's ineqality as in the proof of Lemma~\ref{lemma_space_trans}.

Next suppose that $\theta$ is an odd positive integer. Then 
\begin{equation*}%\label{time_est_tau_2}
\begin{split}
&\int_0^{T-\tau} \int_\R \chi(x) \abs{u^\D(x,t+\tau)-u^\D(x,t)} \dx \dt  
\\& \quad \le \int_0^{T-\tau} \int_\R \chi(x) 
\abs{u^\D(x,t+(\theta-1) \Dt)-u^\D(x,t)} \dx \dt 
\\ & \quad \qquad 
+ \int_0^{T-\tau} \int_\R \chi(x) 
\abs{u^\D(x,t + \theta  \Dt)-u^\D(x,t+ (\theta -1) \Dt )} \dx \dt.
\end{split}
\end{equation*}
The desired estimate then follows from the  result 
for $\theta$ even that was proven above,
along with Lemma~\ref{lemma_lxf_time_trans}.

Finally, take the case where $\tau$ is not an 
integral multiple of $\Dt$, say
$\tau = \theta \Dt  + \alpha \Dt $, where $\theta$ is 
a nonnegative integer and $\alpha \in (0,1)$. Then 
\begin{equation}\label{time_est_tau_3}
\begin{split}
&\int_0^{T-\tau} \int_\R \chi(x) \abs{u^\D(x,t+\tau)-u^\D(x,t)} \dx \dt  
\\ & \quad 
\le \int_0^{T-\tau} \int_\R \chi(x) \abs{u^\D(x,t+\theta \Dt )-u^\D(x,t)} \dx \dt  
\\ & \quad \qquad 
+ \int_0^{T-\tau} \int_\R \chi(x) 
\abs{u^\D(x,t+\theta \Dt+\alpha \Dt)-u^\D(x,t+\theta \Dt )} \dx \dt .
\end{split}
\end{equation}
The first integral on the right side of \eqref{time_est_tau_3} 
is $\lesssim \tau^\mu$, using the previous 
part of the proof.  For the second integral, we use the 
fact that $u^{\D}$ is piecewise constant, and that
\begin{equation*}
\begin{split}
&
u^\D(x,t+\theta \Dt+\alpha \Dt) -u^\D(x,t+\theta \Dt ) \\
&\qquad 
=\begin{cases}
0, &\quad t \in [t^n,t^{n+1} - \alpha \Dt),
\\
u^\D(x,t+\theta \Dt + \Dt) - u^\D(x,t+\theta \Dt), 
&\quad t \in [t^{n+1} - \alpha \Dt, t^{n+1}).
\end{cases}
\end{split}
\end{equation*}
Thus the second integral on the right side 
of \eqref{time_est_tau_3} is equal to
\begin{equation*}
\alpha \int_0^{T-\tau} \int_\R \chi(x) 
\abs{u^\D(x,t+\theta \Dt+\Dt)-u^\D(x,t+\theta \Dt)} \dx \dt,
\end{equation*}
which is $\lesssim \Dt ^{\mu}\lesssim \tau^\mu$ by 
Lemma~\ref{lemma_lxf_time_trans}.
\end{proof}

%--------------------------------------------------------------------------------

%%%%%%%%%%%%%%
%REFERENCES
%%%%%%%%%%%%%%

%\bibliographystyle{abbrv}
%\bibliography{/Users/kennethk/Documents/Importantfiles/bibfiles/Bibliography}

\end{document}